   \def\MR#1{}
   \def\DOI#1{}
   \def\URL#1{}
   \def\ISBN#1{}
\apptocmd{\sloppy}{\hbadness 10000\relax}{}{}
\let\oldqedbox\qedsymbol
\newcommand{\twoqedbox}{\oldqedbox}
\declaretheorem[
name = Theorem,
style = plain,
numberwithin = section,
]{theorem}
\declaretheorem[
name = Lemma,
style = plain,
numberwithin = section,
]{lemma}
\declaretheorem[
name = Corollary,
style = plain,
numberwithin = section,
]{corollary}
\declaretheorem[
name = Definition,
style = plain,
numberwithin = section,
]{definition}
\declaretheoremstyle[
  style=examplestyle,
  headfont=\normalfont\bfseries,
  sharenumber = theorem,
  bodyfont=\normalfont,
     qed = {\hbox{$\triangleleft$}}
]{examplestyle}
\declaretheorem[
  style=examplestyle,
  title=Example,
  refname={example,examples},
  Refname={Example,Examples},
  %   qed = {\hbox{$\triangleleft$}}
]{example}
\declaretheorem[
  style=examplestyle,
  title=Remark,
  refname={remark, remarks},
  Refname={Remark, Remarks},
     %qed = {\hbox{$\triangleleft$}}
]{remark}
\numberwithin{equation}{section}
\def\l@subsection{\@tocline{2}{0pt}{1pc}{4.6em}{}}
\renewcommand{\tocsubsection}[3]{%
  \indentlabel{\@ifnotempty{#2}{\hspace*{2.3em}\makebox[2.3em][l]{%
    \ignorespaces#1 #2.\hfill}}}#3}
\newcommand{\at}{\mathsf{t}}
\newcommand{\amlt}{\circ^{t}}
\newcommand{\nucleus}{\mathscr{N}}
\newcommand{\nt}{\mathsf{t}}
\newcommand{\ass}{\mathscr{A}}
\newcommand{\pmlt}{\diamond}
\renewcommand{\sprod}{\odot}
\newcommand{\cc}{\omega}
\newcommand{\one}{\mathbb{1}}
\newcommand{\bw}{\mathscr{W}}
\newcommand{\bwu}{\mathscr{U}}
\newcommand{\bwl}{\mathscr{L}}
\newcommand{\acl}{\mathsf{r}}
\newcommand{\mcl}{\mathsf{m}}
\newcommand{\acr}{\bar{\mathsf{r}}}
\newcommand{\eperp}{\lb e \ra^{\perp}}
\newcommand{\herm}{\mathbb{Herm}}
\newcommand{\mat}{\mathbb{M}}
\newcommand{\sect}{\mathscr{K}}
\newcommand{\szero}{\mathbb{Nil}_{2}}
\newcommand{\idem}{\mathbb{Idem}}
\newcommand{\spec}{\operatorname{Spec}}
\newcommand{\specp}{\spec^{\perp}}
\newcommand{\suba}{\mathbb{U}}
\newcommand{\mcurv}{\mathscr{MC}}
\newcommand{\quat}{\mathbb{H}}
\newcommand{\cayley}{\mathbb{O}}
\newcommand{\ideal}{\mathbb{I}}
\newcommand{\fie}{\mathbb{k}}
\newcommand{\hur}{\mathbb{h}}
\newcommand{\so}{\mathfrak{so}}
\newcommand{\chr}{\operatorname{char}}
\newcommand{\cone}{\mathbb{C}}
\newcommand{\squares}{\mathbb{Q}}
\newcommand{\alg}{\mathbb{A}}
\newcommand{\balg}{\mathbb{B}}
\newcommand{\calg}{\mathbb{U}}
\newcommand{\h}{\mathfrak{h}}
\newcommand{\om}{\omega}
\newcommand{\mlt}{\circ}
\renewcommand{\part}{\vdash}
\newcommand{\Id}{\operatorname{Id}}
\newcommand{\dum}{\,\cdot\,}
\newcommand{\cycle}{\mathsf{Cycle}}
\newcommand{\Q}{\mathcal{Q}}
\newcommand{\id}{\operatorname{Id}}
\renewcommand{\j}{i}
\newcommand{\la}{\lambda}
\newcommand{\ep}{\epsilon}
\newcommand{\fiet}{\dot{\fie}}
\newcommand{\reat}{\dot{\rea}}
\newcommand{\ext}{\mathsf{\Lambda}}
\newcommand{\cinf}{C^{\infty}}
\newcommand{\eno}{\operatorname{End}}
\newcommand{\si}{\sigma}
\newcommand{\sign}{\operatorname{sgn}}
\newcommand{\integer}{\mathbb{Z}}
\newcommand{\re}{\operatorname{Re}}
\newcommand{\im}{\operatorname{im}}
\newcommand{\lb}{\langle}
\newcommand{\ra}{\rangle}
\newcommand{\ste}{\mathbb{V}}
\newcommand{\spn}{\operatorname{Span}}
\newcommand{\su}{\mathfrak{su}}
\newcommand{\sll}{\mathfrak{sl}}
\newcommand{\proj}{\mathbb{P}}
\newcommand{\Aut}{\operatorname{Aut}}
\newcommand{\g}{\mathfrak{g}}
\newcommand{\ad}{\operatorname{ad}}
\newcommand{\tensor}{\otimes}
\newcommand{\rea}{\mathbb R}
\newcommand{\com}{\mathbb C}
\newcommand{\tr}{\operatorname{tr}}
\newcommand{\kwedge}{\owedge}
\renewcommand{\P}{\mathcal{P}}
\let\oldtocsection=\tocsection
\let\oldtocsubsection=\tocsubsection
\renewcommand{\tocsection}[2]{\hspace{0em}\oldtocsection{#1}{#2}}
\renewcommand{\tocsubsection}[2]{\hspace{1em}\oldtocsubsection{#1}{#2}}
\begin{document}

\title[Sectional nonassociativity of metrized algebras]{Sectional nonassociativity of metrized algebras}

\author{Daniel J.~F. Fox}
\address{Departamento de Matemática Aplicada\\ Escuela Técnica Superior de Arquitectura\\ Universidad Politécnica de Madrid\\Av. Juan de Herrera 4 \\ 28040 Madrid España}
\email{daniel.fox@upm.es} 

%\date{\today}

%\subjclass[2000]{Primary ; Secondary}
%\keywords{}

\begin{abstract}
The sectional nonassociativity of a metrized (not necessarily associative or unital) algebra is defined analogously to the sectional curvature of a pseudo-Riemannian metric, with the associator in place of the Levi-Civita covariant derivative. For commutative real algebras nonnegative sectional nonassociativity is usually called the Norton inequality, while a sharp upper bound on the sectional nonassociativity of the Jordan algebra of Hermitian matrices over a real Hurwitz algebra is closely related to the Böttcher-Wenzel-Chern-do Carmo-Kobayashi inequality. These and other basic examples are explained, and there are described some consequences of bounds on sectional nonassociativity for commutative algebras. A technical point of interest is that the results work over the octonions as well as the associative Hurwitz algebras.
\end{abstract}

\maketitle

%%%%%%% Table of contents depth specification
\setcounter{tocdepth}{1}  % Print the section only to the toc

%\begin{footnotesize}
\tableofcontents
%\end{footnotesize}

\section{Introduction}
%The notion of \emph{sectional nonassociativity} for a metrized algebra presented here is modeled on the usual notion of sectional curvature of metric connection on a tangent bundle. 

For an algebra $(\alg, \mlt)$ equipped with a metric, meaning a nondegenerate symmetric bilinear form, that is invariant, meaning $h(x\mlt y, z) = h(x, y \mlt z)$ for all $x, y, z\in \alg$, the \emph{sectional nonassociativity} $\sect(x, y)$ of the $h$-nondegenerate subspace $\spn\{x, y\} \subset \alg$ is defined in \cite{Fox-simplicial} to be
\begin{align}\label{sectnadefined0}
\begin{split}
\sect(x, y) & = \frac{h(x\mlt x , y\mlt y) - h(x\mlt y, y \mlt x)}{h(x, x)h(y, y) -h(x, y)^{2} }.
\end{split}
\end{align}
(Because $\mlt$ need not be commutative or anticommutative, the order of $x$ and $y$ in \eqref{sectnadefined0} matters.)
The sectional nonassociativity is analogous to the sectional curvature of the Levi-Civita connection of a pseudo-Riemannian metric, with the multiplication and its associator in place of the covariant derivative and its curvature. Section \ref{analogysection} describes in detail this analogy between the multiplication of an algebra an torsion-free connection on a tangent bundle. The notion itself, and its basic properties, are given in Section \ref{sectnasection}. 

Quantitative bounds on the sectional nonassociativity of a metrized real algebra encompass apparently unrelated inequalities such as the Böttcher-Wenzel-Chern-do Carmo-Kobayashi inequality bounding the norm of a matrix commutator and the Norton inequality asserting the nonnegativity of the quadratic expression $h( x\mlt x, y \mlt y) - h(x\mlt y, x \mlt y)$ for certain metrized commutative algebras arising in the theory of vertex operator algebras whose most famous representative is the $196844$-dimensional Griess algebra on which the monster finite simple group acts by automorphisms. 

It is apparent from \eqref{sectnadefined0} that, for a commutative algebra, the Norton inequality may be rephrased as \emph{nonnegative sectional nonassociativity}. On the other hand, if $(\alg, [\dum, \dum], h)$ is a metrized anticommutative algebra (for example, a Lie or Malcev algebra with $h$ a multiple of its Killing form), \eqref{sectnadefined0} takes the form 
\begin{align}\label{acsect}
\sect(x, y) & = \frac{h([x, y], [x, y])}{h(x, x)h(y, y) -h(x, y)^{2} }.
\end{align}
If $h$ is positive definite, the right-hand side of \eqref{acsect} is bounded from above by $\frac{h([x, y], [x, y])}{h(x, x)h(y, y)}$ and, by the Cauchy-Schwarz inequality, this quantity is bounded from above. The Böttcher-Wenzel inequality asserts that for the algebra $\mat(n, \com)$ of $n\times n$ complex matrices metrized by the Frobenius norm this quantity is bounded from above by $2$ (which is better than the constant obtained by applying Cauchy-Schwarz), and characterizes the case of equality. The Böttcher-Wenzel inequality can be interpreted as an upper bound on the sectional nonassociativity of the Lie algebra of $n\times n$ matrices $\mat(n, \com)$ with the usual matrix commutator. 

For an anticommutative algebra, \eqref{acsect} is obviously nonnegative, with equality if and only if $[x, y] = 0$, so that characterization of the equality case in the lower bound amounts to characterization of commuting pairs of elements. 
On the other hand, for a commutative algebra, \eqref{sectnadefined0} can take both signs. These observations suggest looking for similar bounds for other metrized algebras, for example commutative algebras or other sorts of Lie algebras.

The extreme case of equal upper and lower bounds is that of constant sectional nonassociativity. This is closely related to the notion of projective associativity of commutative algebras studied in \cite{Fox-simplicial}. Although it is not solved here, the classification of metrized algebras with constant sectional nonassociativity appears an accessible problem. However, if anisotropy of the metric is not assumed, examples abound and it is still not clear what form the classification will take even in the simplest case of vanishing sectional nonassociativity. Essentially by definition, associative and alternative algebras have vanishing sectional nonassociativity. In Section \ref{examplesection}, it is shown that both a quadratic $2$-step nilpotent Lie algebra and a metrized antiflexible algebra have vanishing sectional nonassociativity, and nontrivial examples of such algebras are given. In particular, Example \ref{antiflexibleexample} exhibits a Euclidean metrized antiflexible algebra with vanishing sectional nonassociativity whose underlying Lie algebra is $\so(3)$ (which has constant sectional nonassociativity $1$ when metrized by the negative of its Killing form) and whose underlying commutative algebra has constant negative sectional nonassociativity. (It gives a similar example with underlying Lie algebra $\sll(2, \rea)$.)

Section \ref{examplesection} describes other examples of algebras having constant sectional nonassociativity. Example \ref{crossproductexample} explains that cross product algebras could be \emph{defined} as antisymmetric metrized algebras having constant negative sectional nonassociativity. 
Example \ref{3dexample} describes a one-parameter family of pairwise nonisomorphic $3$-dimensional metrized commutative algebras having constant sectional nonassociativities in $(-\infty, 1/4]$. 

Section \ref{boundssection} gives examples of algebras for which there can be established bounds on the sectional nonassociativities. Example \ref{voaexample} recalls a result of Miyamoto shows that the Griess algebras of OZ vertex operator algebras satisfy the Norton inequality. In this context it would be interesting to establish upper bounds. Example \ref{compositionsectexample} shows that symmetric composition algebras such as para-Hurwitz and Okubo algebras have sectional nonassociativity bounded between $-1$ and $1$. 

The remainder of Section \ref{boundssection} is devoted to the proof of Theorem \ref{hermsecttheorem}, which gives sharp upper and lower bounds on the sectional nonassociativities of a simple Euclidean Jordan algebra of rank at least $3$ together with characterization of the cases of equality in the bounds. These algebras are simply the Jordan algebras $\herm(n, \hur)$ of Hermitian matrices over the real Hurwitz algebras $\hur \in \{\rea, \com, \quat, \cayley\}$, where $n \geq 3$, and, in the case of the octonions, $\cayley$, attention is restricted to $n =3$. Theorem \ref{hermsecttheorem} states that the sectional nonassociativity, $\sect(x, y)$, defined in Definition \ref{sectdefinition}, of the subspace spanned by linearly independent $x, y \in \herm(n, \hur)$ satisfies
\begin{align}
0 \leq \sect(x, y) = \sect_{h, \star}(x, y) \leq \tfrac{n}{2},
\end{align}
and characterizes the subspaces for equality is obtained in the upper and lower bound. The lower bound is the Norton inequality.
The sharp upper bound is deduced as a consequence of the Chern-do Carmo-Kobayashi inequality for the commutators of Hermitian matrices. This requires some argument to extend this inequality to the octonions that is formulated as Lemma \ref{cdklemma}; this extension may be of independent interest.

These results are of interest from the point of view of applications of the Böttcher-Wenzel-Chern-do Carmo-Kobayashi inequality. This inequality is recalled in section \ref{bwsection} and it is shown that the Chern-do Carmo-Kobayashi inequality holds for $\herm(n, \quat)$ and partially for $\herm(3, \cayley)$. 
These refinements, Theorem \ref{bwtheorem} and the variant Lemma \ref{cdklemma}, valid over a real Hurwitz algebra, are used in the proof of Theorem \ref{hermsecttheorem} to estimate the sectional nonassociativity of the deunitalization of a simple Euclidean Jordan algebra. In a sense, this amounts to reinterpreting the  Böttcher-Wenzel-Chern-do Carmo-Kobayashi inequalities as upper bounds on sectional nonassociativity. 

The notion of sectional nonassociativity facilitates quantifications of nonassociativity both more refined and less exact than nonnegativity or nonpositivity, and these may in turn suggest refinements of the Böttcher-Wenzel inequality. More generally, the sectional nonassociativity is manifestly invariant under isometric algebra automorphisms and its relation with other similar quantities deserves to be explored more. See Remark \ref{vinbergremark} for further comments in this direction. 

Not many examples of commutative algebras satisfying the Norton inequality are known and the description of such algebras is interesting and their classification, possibly with additional hypotheses, may be a possibly tractable problem. The main known examples are the Griess algebras of certain vertex operator algebras \cite{Miyamoto-griessalgebras} and their subalgebras, as is explained next in Example \ref{voaexample}, and, by Theorem \ref{hermsecttheorem}, the simple Euclidean Jordan algebras, as is shown in Theorem \ref{hermsecttheorem}. (It is known that most of the simple Euclidean Jordan algebras can also be obtained as Griess algebras of certain vertex operator algebras \cite{Ashihara-Miyamoto, Lam-jordan, Zhao-jordan}.) Further examples are the Majorana algebras introduced by A.~A. Ivanov \cite{Ivanov, Ivanov-Pasechnik-Seres-Shpectorov} which are defined by a list of axioms including the Norton inequality. See \cite{Whybrow} for a family of examples depending on a parameter and further references.

More generally, the notion of sectional nonassociativity should help to organize the study of metrized algebras in the same manner that restriction on curvature quantities help to organize the study of Riemannian manifolds. Typically classes of algebras are defined by identities. From the point of view advocated here this is something like studying symmetric spaces (whose curvature tensors satisfy algebraic equalities). 

Section \ref{consequencessection} presents some general consequences for commutative algebras of bounds on sectional nonassociativity. Combined with the notion of sectional nonassociativity, results about the eigenvectors and eigenvalues of the structure tensor can be used to relate the existence and description of idempotents and square-zero elements to conditions involving sectional nonassociativity. Since an automorphism of the algebra must permute its idempotents and square-zero elements, such results yield results about the size of the automorphism group. For example, Theorem \ref{finiteautotheorem} shows that a Euclidean metrized commutative algebra has finite automorphism group if its sectional nonassociativity is negative or if its sectional nonassociativity is nonpositive and the algebra contains no trivial subalgebra. In the other direction, Lemma \ref{rchassoclemma} shows that nonnegative sectional nonassociativity precludes existence of $2$-nilpotents. The flavor of such results conforms broadly with the geometric analogy. For example, the automorphism group of a negatively curved Riemann surface is finite, while the (positively curved) sphere has a large continuous group of automorphisms.  

The paper is structured as follows. Section \ref{preliminarysection} presents basic definitions related to metrized algebras. Section \ref{analogysection} shows that the analogy between multiplications and covariant derivatives has substance and presents results that help motivate the definition of sectional nonassociativity given in Section \ref{sectnasection}. Section \ref{sectnasection} also describes the most basic properties of the sectional nonassociativity. Section \ref{examplesection} presents examples having constant sectional nonassociativity; these include Hurwitz algebras, cross-product algebras, antiflexible algebras, and some novel three-dimensional algebras. Section \ref{boundssection} describes examples satisfying bounds on sectional nonassociativity generalizing the Norton inequality; these include symmetric composition algebras, Griess algebras of OZ vertez operator algebras, and the simple Euclidean Jordan algebras. The proof of the upper bound on the sectional nonassociativity of simple Euclidean Jordan algebras given in Theorem \ref{hermsecttheorem} requires  the Böttcher-Wenzel-Chen-do Carmo-Kobayashi inequality for matrices over a Hurwitz algebra. Section \ref{bwsection} describes this and addresses what happens over the octonions. Finally, Section \ref{consequencessection} presents some general consequences for commutative algebras of bounds on sectional nonassociativity. 

\section{Preliminaries}\label{preliminarysection}
Let $\fie$ be a field. Everywhere in the paper there is assumed $\chr \fie \notdivides 6$, although sometimes assumptions on $\chr \fie$ are stated explicitly for clarity.

An algebra means a finite-dimensional vector space $\alg$ over $\fie$ equipped with a bilinear multiplication $\mlt:\alg \times \alg \to \alg$. It need not be associative, commutative, or unital. The left and right multiplication endomorphisms $L_{\mlt}, R_{\mlt}:\alg \to \eno(\alg)$ are defined by $L_{\mlt}(x)y = x\mlt y = R_{\mlt}(y)x$. 

An algebra is \emph{metrized} if it is equipped with a \emph{metric}, meaning a nondegenerate symmetric bilinear form, $h$, that is \emph{invariant} (also called \emph{associative}), meaning that $h(x\mlt y, z) = h(x, y\mlt z)$ for all $x, y, z\in \alg$. 

A \emph{Euclidean} metrized algebra is a real algebra metrized by a positive definite metric. (Essentially all results in the paper stated for Euclidean metrized algebras remain true with a real-closed base field in place of $\rea$, but it would be distracting to state and prove them in this generality.) That an algebra be Euclidean metrized means that the adjoint of the left multiplication endomorphism $L_{\mlt}(x)$ equals the right multiplication endomorphism $R_{\mlt}(x)$. Consequently, for a Euclidean metrized commutative algebra the multiplication endomorphisms $L_{\mlt}(x) = R_{\mlt}(x)$ are self-adjoint with respect to a positive definite symmetric bilinear form. 

\begin{example}
A semisimple real Lie algebra $(\g, \mlt)$ is metrized by the negative $h = -B_{\g}$ of its Killing form $B_{\g} = \tr L_{\mlt}(x)L_{\mlt}(y)$. It is Euclidean in the above sense if it is moreover compact. More generally, an anticommutative algebra $(\alg, \mlt)$ is \emph{Malcev} if
\begin{align}\label{malcev}
&(x\mlt y)\mlt (x\mlt z) = ((x\mlt y)\mlt z)\mlt x + ((y\mlt z)\mlt x)\mlt x + ((z\mlt x)\mlt x)\mlt y,&& \text{for all}\,\, x, y, z \in \alg.
\end{align}
for all $x, y, z \in \alg$. Any Lie algebra is Malcev. By \cite[Theorem $7.16$]{Sagle-malcev} the Killing form $\tau_{\mlt}(x, y) = \tr L_{\mlt}(x)L_{\mlt}(y)$ of a Malcev algebra is invariant. The identity \eqref{malcev} can be written $L_{\mlt}(x\mlt y)L_{\mlt}(x) + L_{\mlt}(x)L_{\mlt}(x\mlt y) = [L_{\mlt}(x)^{2}, L_{\mlt}(y)]$ and tracing this shows $\tau_{\mlt}(x\mlt y, x) = \tau_{\mlt}(x, y\mlt x)$. Polarizing this in $x$ yields the claim.
\end{example}

Lemma \ref{2dfrobcommutativelemma} generalizes the statement that a Lie algebra admitting an invariant nondegenerate bilinear form has dimension at least $3$.
\begin{lemma}\label{2dfrobcommutativelemma}
A two-dimensional metrized algebra $(\alg, \mlt, h)$ is commutative.
\end{lemma}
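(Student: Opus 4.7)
My plan is to encode the multiplication as a trilinear form and reduce commutativity to a purely combinatorial symmetry statement. Define $T\colon \alg\times\alg\times\alg\to\fie$ by $T(x,y,z) = h(x\mlt y, z)$. Symmetry of $h$ combined with invariance $h(x\mlt y,z) = h(x, y\mlt z)$ gives
\begin{align*}
T(x,y,z) = h(x\mlt y, z) = h(z, x\mlt y) = h(z\mlt x, y) = T(z, x, y),
\end{align*}
so $T$ is cyclically invariant in its three arguments. The conclusion $x\mlt y = y\mlt x$ is, by nondegeneracy of $h$, equivalent to $T(x,y,z) = T(y,x,z)$ for all $z$, i.e.\ to symmetry of $T$ under the transposition of its first two slots. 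Combined with cyclic invariance, this is the same as asking that $T$ be totally symmetric. So the problem reduces to: in dimension two, any cyclically invariant trilinear form is totally symmetric.

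Next I would verify this combinatorial fact by testing $T$ on a basis $\{e_1, e_2\}$. Any triple $(i,j,k) \in \{1,2\}^{3}$ involves at most two distinct indices, so its orbit under the full symmetric group $S_{3}$ has size $1$ (when $i=j=k$) or $3$ (when exactly two of $i,j,k$ agree). Its cyclic orbit has size $1$ or $3$ as well, and in the size-three case the two orbits coincide, because any three-element $S_{3}$-orbit consists of a single cyclic class. Hence on basis triples, cyclic symmetry of $T$ already forces $T(e_i, e_j, e_k) = T(e_j, e_i, e_k)$, and by trilinearity this extends to arbitrary $x,y,z \in \alg$. Nondegeneracy of $h$ then yields $x\mlt y = y\mlt x$.

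The only delicate point is the combinatorial coincidence of cyclic and symmetric orbits, and it is precisely what singles out dimension two: as soon as $\dim \alg \geq 3$ a triple of three pairwise distinct indices has a six-element $S_{3}$-orbit which splits into two three-element cyclic orbits, so cyclic symmetry no longer implies total symmetry. This matches the geometric intuition that genuinely noncommutative examples, such as simple Lie algebras with their Killing form, first arise in dimension three.
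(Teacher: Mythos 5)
Your proof is correct and rests on the same mechanism as the paper's: the cyclic invariance of the trilinear form $(x,y,z)\mapsto h(x\mlt y,z)$ checked against a basis, followed by nondegeneracy of $h$. The paper simply writes out the chain $h(u\mlt v,u)=h(u,u\mlt v)=h(u\mlt u,v)=h(v,u\mlt u)=h(v\mlt u,u)$ explicitly, which is your orbit coincidence for the triple $(1,2,1)$ made concrete; your reformulation in terms of cyclic versus full $S_3$-orbits is a nice way of explaining why the argument is special to dimension two, but it is the same proof.
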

\begin{proof}
If $u$ and $v$ span $\alg$, then $h(u \mlt v, u) = h(u, u \mlt v) = h(u\mlt u, v) = h(v, u \mlt u) = h(v \mlt u, u)$ and, similarly, $h(u\mlt v, v) = h(v \mlt u, v)$. By the nondegeneracy of $h$ this implies $u\mlt v = v \mlt u$, which suffices to show the claim.
\end{proof}

The \emph{associator} of an algebra is the trilinear map $\tensor^{3}\alg^{\ast} \to \alg$ defined by 
\begin{align}
[x, y, z] = \ass_{\mlt}(x, y)z = (L_{\mlt}(x\mlt y) - L_{\mlt}(x)L_{\mlt}(y))z = (x\mlt y)\mlt z - x\mlt (y \mlt z).
\end{align} 
The map $\ass_{\mlt}(x, y) \in \eno(\alg)$ is the \emph{(left) associator endomorphism}. 

\begin{example}
A commutative algebra is \emph{Jordan} if $[x\mlt x, y, x] = 0$ for all $x, y, \in \alg$. For a Jordan algebra, the bilinear form $\tr L_{\mlt}(x\mlt y)$ is invariant, so it is metrized if this bilinear form is nondegenerate. A real Jordan algebra is Euclidean if it is formally real, meaning that a sum of squares equals zero if and only if each of the elements squares is zero. A real Jordan algebra is Euclidean as a Jordan algebra if and only if $\tr L_{\mlt}(x\mlt y)$ is positive definite \cite{Jacobson-jordan}.
\end{example}

\begin{example}
A \emph{composition algebra} is an algebra $(\alg, \mlt, q)$ with a nondegenerate quadratic form $q$ that is multiplicative meaning $q(x\mlt y) = q(x)q(y)$ for all $x, y \in \alg$. A \emph{Hurwitz} algebra is a unital composition algebra $(\alg, \mlt, e, q)$. By the Hurwitz theorem, a real Hurwitz algebra is one of the real field $\rea$, the complex field $\com$, the quaternions $\quat$, or the octonions $\cayley$.  For background on composition and Hurwitz algebras see \cite{Elduque-composition, Knus-Merkurjev-Rost-Tignol, Shapiro-compositions}.

A Hurwitz algebra carries the nondegenerate symmetric bilinear form $h(x, y) = q(x + y) - q(x) - q(y)$ obtained from $q$ by linearization. By the nondegeneracy and multiplicativity of $q$, $h(e, e) = 2q(e) = 2$. The \emph{standard involution} of $(\alg, \mlt, e, q)$ is the algebra antiautomorphism defined by $\bar{x} = h(x, e)e - x$. Its fixed point set is the subalgebra generated by $e$ and is isomorphic to $\rea$. Taking $z = e$ yields $h(x, y)e = x\bar{y} + y \bar{x} = h(\bar{x}, \bar{y})e$. A Hurwitz algebra equipped with its standard involution is a metrized algebra with involution in the sense of \cite{Allison}, meaning that $h$ is \emph{involutively invariant} in the sense that $h(x, z\mlt \bar{y}) = h(x\mlt y, z) = h(y, \bar{x}\mlt z)$ for all $x, y, z \in \alg$. Note that, except when $\hur = \fie$, $h$ is \emph{not} invariant, so $(\hur, \mlt, h)$ is \emph{not} metrized as an algebra.
\end{example}

\section{The analogy between multiplications and connections}\label{analogysection}
The following analogy between algebras and connections illuminates the point of view taken here. The multiplication $\mlt$ is analogous to a torsion-free connection on a tangent bundle, and the associator corresponds to (the negative of) the second covariant derivative. In fact, a torsion-free connection on a tangent bundle can be regarded as a Lie-admissible product on the space of vector fields, and in that setting the sectional nonassociativity defined in Section \ref{sectnasection} \emph{is} simply the sectional curvature. However that setting differs from that considered here in several ways. First, the space of vector fields is infinite-dimensional. Second, the covariant derivative is a $\cinf$-module map in its first argument. A general context encompassing both finite-dimensional algebras and connections on tangent bundles is algebroids, but this is not discussed here. 

The notions suggested by the multiplication-connection analogy prove surprisingly robust and provide principles for organizing the zoo of general algebras different from those that arise from classical points of view. In particular, they offer possibilities for quantifying nonassociativity that turn out to be quite useful.

This idea has been discussed in similar terms in \cite{Goze-Remm-lieadmissible} in the context of Lie-admissible algebras and has roots in foundational articles on left-symmetric algebras \cite{Shima, Vinberg}. In \cite{Bremner-Dotsenko} there are considered algebras satisfying identities like those treated in this section from the more sophisticated point of view of Koszul duality of operads.  The unrelated duality given by the adjoint multiplication considered here is motivated by considerations from affine differential geometry \cite{Fox-crm}. These ideas are expounded in the more limited context of metrized commutative algebras in \cite{Fox-simplicial, Fox-cubicpoly}.

The analogy is implemented in practice in the following way. The space of connections on a bundle is an affine space. A reference connection is fixed and then any other connection is identified with a tensor. Any expression involving this connection involves some derivatives and some purely algebraic part expressible in terms of this tensor. The purely algebraic expression make sense also for the structure tensor of an algebra. For example, the covariant derivative of a metric $g$ with respect to a torsion-free affine connection $\nabla$ is $(\nabla_{X}g)(Y, Z) = X(g(Y, Z)) - g(\nabla_{X}Y, Z) - g(Y, \nabla_{X}Z)$. The corresponding expression for a metric $g$ on an algebra $(\alg, \mlt)$ is $-g(x\mlt y, z) - g(y, x \mlt z)$. The vanishing of this expression is analogous to the metric tensor being parallel. This kind of invariance should probably be properly formulated as a cohomological condition, but it is not clear how to give sense to this last claim when working in the generality considered here.

In the analogy, the underlying bracket is regarded as background information of a topological rather than geometric nature, analogous to the Lie algebra of vector fields on a smooth manifold.

Applying the connection-algebra analogy to the curvature of an affine connection yields the following definitions.

Decomposing the product $\mlt$ by symmetries leads to two underlying algebras, from which $(\alg, \mlt)$ can be reconstructed (except when $\chr \fie = 2$). The \emph{underlying bracket} (or \emph{commutator}) of the algebra $(\alg, \mlt)$ is the commutator defined by $[x, y] = x\mlt y - y \mlt x$ and $(\alg, [\dum, \dum])$ is the \emph{underlying} antisymmetric algebra.
The \emph{underlying commutative algebra} of an algebra $(\alg, \mlt)$ is $\alg$ equipped with the product $x \sprod y = x\mlt y + y \mlt x$. 

The multiplication $\amlt$ \emph{adjoint} to $\mlt$ is that defined by $x \amlt y = - y \mlt x$. It is the negative of the opposite multiplication. By definition, that $L_{\amlt} = - R_{\mlt}$ and $R_{\amlt} = -L_{\mlt}$. The sign in the definition of the adjoint multiplication $\amlt$ is chosen so that $\mlt$ and the adjoint product $\amlt$ have the same underlying bracket.

The \emph{curvature} $\acl:\alg \tensor \alg \to \eno(\alg)$ and \emph{adjoint curvature} $\acr:\alg \tensor \alg \to \eno(\alg)$ of $(\alg, \mlt)$ are defined by 
\begin{align}\label{acdefined}
\begin{split}
\acl(x, y)z & = -[x, y, z] + [y, x, z]= \left([L_{\mlt}(x), L_{\mlt}(y)] - L_{\mlt}([x, y])\right)z  = (\ass_{\mlt}(y, x) - \ass_{\mlt}(x, y))z,\\ 
\acr(x, y)z &= -[z, x, y] + [z, y, x] = \left([R_{\mlt}(x), R_{\mlt}(y)] + R_{\mlt}([x, y])\right)z= (\ass_{\amlt}(y, x) - \ass_{\amlt}(x, y))z.
\end{split}
\end{align}
The adjoint curvature of $\mlt$ is the curvature of the adjoint multiplication $\amlt$ and vice-versa. For any tensorial object constructed from a multiplication $\mlt$ there is a corresponding tensorial object constructed in the same way from the adjoint multiplication $\amlt$, and it is denoted by putting a $\bar{\,}$ over the notation indicating the object associated with $\mlt$.  This convention is consistent with the notation $\acl$ and $\acr$.

The algebra has \emph{self-adjoint curvature} if $\acl = \acr$ and \emph{anti-self-adjoint curvature} if $\acl  = -\acr$.

\begin{example}
An algebra $(\alg, \mlt)$ is \emph{flexible} if $[x, y, x] = 0$ for all $x, y \in \alg$. A commutative or anticommutative algebra is flexible. The symmetric composition algebras discussed in Example \ref{compositionsectexample} are also flexible.
It is evident from \eqref{acdefined} that a flexible algebra has self-adjoint curvature.
\end{example}

\begin{example}\label{antiflexibleasaexample}
An algebra $(\alg, \mlt)$ is \emph{antiflexible} if $[x, y, z] = [z, y, x]$ for all $x, y, z \in \alg$. It is evident from \eqref{acdefined} that an antiflexible algebra has anti-self-adjoint curvature.
\end{example}

\begin{example}
An algebra is \emph{left-symmetric} (also called \emph{pre-Lie} or \emph{Vinberg}) if $\ass_{\mlt}(x, y) = \ass_{\mlt}(y, x)$ for all $x, y \in \alg$. It is evident from \eqref{acdefined} that an algebra is left-symmetric if and only if its curvature $\acl$ vanishes. The vanishing of the adjoint curvature is equivalent to the left symmetry of the adjoint multiplication, or that the given multiplication is \emph{right-symmetric}. 

Any left-symmetric algebra that is not right-symmetric gives an example of an algebra $(\alg, \mlt)$ not isomorphic to its adjoint $(\alg, \amlt)$. More generally, an algebra that does not have self-adjoint curvature is not isomorphic to its adjoint algebra.
\end{example}

\begin{example}\label{2stepflatexample}
An awkward aspect of the conventions here is that the underlying bracket of an anticommutative algebra is twice the original bracket, and this needs to be remembered when interpreting \eqref{acdefined}. 

As a Lie algebra $(\g, [\dum, \dum])$ is flexible, it has self-adjoint curvature, and from \eqref{acdefined} it follows that $\acr(x, y) = \acl(x, y) = -\ad_{\g}([x, y]) = -[\ad_{\g}(x), \ad_{\g}(y)]$. Consequently any $2$-step nilpotent Lie algebra provides an example of a nonassociative algebra for which both $\acl$ and $\acr$ vanish identically. 
\end{example}

Define $A_{\mlt}(x) = L_{[\dum, \dum]}(x) = L_{\mlt}(x) - R_{\mlt}(x)$, so that $A_{\mlt}(x)y = [x, y]$, and define $S_{\mlt}(x) = L_{\sprod}(x) = L_{\mlt}(x) + R_{\mlt}(x)$, so that $S_{\mlt}(x)y = x\sprod y$. Straightforward computations using \eqref{acdefined} show that, in any algebra,
\begin{align}\label{ascommutator}
[A_{\mlt}(x), S_{\mlt}(y)]- S_{\mlt}(A_{\mlt}(x)y) & = \acl(x, y) - \acr(x, y) - [R_{\mlt}(x), L_{\mlt}(y)] - [R_{\mlt}(y), L_{\mlt}(x)].
\end{align}
Decomposing \eqref{ascommutator} into its parts symmetric and antisymmetric in $x$ and $y$ yields the relations
\begin{align}\label{preflexible}
\begin{split}
[A_{\mlt}(x), S_{\mlt}(y)] + [A_{\mlt}(y), S_{\mlt}(x)] = -2[R_{\mlt}(x), L_{\mlt}(y)] - 2[R_{\mlt}(y), L_{\mlt}(x)],
\end{split}
\end{align}
and
\begin{align}\label{structurable}
\begin{split}
\acl(x, y)x - \acr(x, y)z & = -[x, y, z] + [z, x, y] - [z, y, x] + [y, x, z] \\
& = \left([L_{\mlt}(x), R_{\mlt}(z)] + [L_{\mlt}(z), R_{\mlt}(x)]\right)y - \left([L_{\mlt}(y), R_{\mlt}(z)] + [R_{\mlt}(y), L_{\mlt}(z)]\right)x,\\\
& = \left(\tfrac{1}{2}[A_{\mlt}(x), S_{\mlt}(y)] - \tfrac{1}{2}[A_{\mlt}(y), S_{\mlt}(x)] - S_{\mlt}([x, y])\right)z,
\end{split}
\end{align}
for all $x, y, z \in \alg$ (the validity of the last equality needs $\chr \fie \neq 2$). 

\begin{lemma}\label{flexiblelemma}
If $\chr \fie \neq 2$, the following conditions on a $\fie$-algebra $(\alg, \mlt)$ are equivalent.
\begin{enumerate}
\item\label{ppo3} $[A_{\mlt}(x), S_{\mlt}(x)] = 0$ for all $x \in \alg$.
\item\label{rlanti} $[R_{\mlt}(x), L_{\mlt}(y)] = -[R_{\mlt}(y), L_{\mlt}(x)]$ for all $x, y \in \alg$.
\item\label{ppo2} $(\alg, \mlt)$ is flexible.
\end{enumerate}
Over a field of any characteristic, a flexible algebra satisfies \eqref{ppo3} and \eqref{rlanti} and has self-adjoint curvature.
\end{lemma}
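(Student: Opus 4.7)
The plan is to reduce every condition to a single clean identity: the vanishing of the commutator $[L_{\mlt}(x), R_{\mlt}(x)]$.

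First I would compute the associator in terms of left and right multiplications. A direct expansion gives $[x, y, x] = (x\mlt y)\mlt x - x\mlt(y\mlt x) = [R_{\mlt}(x), L_{\mlt}(x)]y$. Thus condition \eqref{ppo2} is equivalent to the operator identity $[L_{\mlt}(x), R_{\mlt}(x)] = 0$ for all $x \in \alg$. Next, since $A_{\mlt}(x) = L_{\mlt}(x) - R_{\mlt}(x)$ and $S_{\mlt}(x) = L_{\mlt}(x) + R_{\mlt}(x)$, a one-line bilinearity computation yields
\begin{equation*}
[A_{\mlt}(x), S_{\mlt}(x)] = [L_{\mlt}(x) - R_{\mlt}(x), L_{\mlt}(x) + R_{\mlt}(x)] = 2[L_{\mlt}(x), R_{\mlt}(x)],
\end{equation*}
so condition \eqref{ppo3} reads $2[L_{\mlt}(x), R_{\mlt}(x)] = 0$, which under $\chr \fie \neq 2$ is equivalent to flexibility. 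This establishes \eqref{ppo3} $\Leftrightarrow$ \eqref{ppo2}.

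For \eqref{rlanti} $\Leftrightarrow$ \eqref{ppo2}, I would again unfold the associator: $[x, z, y] = [R_{\mlt}(y), L_{\mlt}(x)]z$ and $[y, z, x] = [R_{\mlt}(x), L_{\mlt}(y)]z$. Setting $y = x$ in \eqref{rlanti} gives $[R_{\mlt}(x), L_{\mlt}(x)] = 0$, hence \eqref{ppo2}. Conversely, substituting $x + y$ in the flexibility identity $[u, z, u] = 0$ and subtracting $[x, z, x] + [y, z, y]$ yields $[x, z, y] + [y, z, x] = 0$, which is exactly \eqref{rlanti}. Note this substitution argument does not need $\chr \fie \neq 2$: the characteristic hypothesis is used only to turn $2[L_{\mlt}(x), R_{\mlt}(x)] = 0$ into the bare commutator vanishing, i.e.\ it is only needed for \eqref{ppo3} $\Rightarrow$ \eqref{ppo2}. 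Therefore, in any characteristic, flexibility implies \eqref{ppo3} (since $2[L,R] = 0$ follows from $[L,R] = 0$) and \eqref{rlanti} (by the polarization-by-substitution).

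Finally, for self-adjoint curvature under flexibility, I would apply the same substitution to obtain the identity $[x, y, z] + [z, y, x] = 0$ for all $x, y, z$, valid in any characteristic. Substituting $[x, y, z] = -[z, y, x]$ and $[y, x, z] = -[z, x, y]$ into the definitions in \eqref{acdefined} gives
\begin{equation*}
\acl(x, y)z = -[x, y, z] + [y, x, z] = [z, y, x] - [z, x, y] = \acr(x, y)z,
\end{equation*}
so $\acl = \acr$. The main subtlety is bookkeeping of where the characteristic hypothesis is genuinely needed: only in dividing by $2$ to pass from condition \eqref{ppo3} back to flexibility, so that the final assertion (flexibility $\Rightarrow$ \eqref{ppo3}, \eqref{rlanti}, and self-adjointness of curvature) holds unconditionally.
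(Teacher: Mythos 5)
Your proof is correct and follows essentially the same route as the paper's: both reduce everything to the commutator $[L_{\mlt}(x), R_{\mlt}(x)]$ and polarize the flexibility identity $[u,z,u]=0$, the only difference being that the paper cites its previously displayed identities \eqref{preflexible} and \eqref{structurable} where you recompute the same relations directly. One small correction to your bookkeeping: setting $y=x$ in \eqref{rlanti} yields $2[R_{\mlt}(x),L_{\mlt}(x)]=0$ rather than $[R_{\mlt}(x),L_{\mlt}(x)]=0$, so the hypothesis $\chr\fie\neq 2$ is also needed for \eqref{rlanti}$\Rightarrow$\eqref{ppo2}, not only for \eqref{ppo3}$\Rightarrow$\eqref{ppo2}; this does not affect the validity of your argument, since the equivalences are asserted only under that hypothesis and your unconditional claims (flexibility implies \eqref{ppo3}, \eqref{rlanti}, and self-adjoint curvature in any characteristic) are proved without dividing by $2$.
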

\begin{proof}
By definition, $(\alg, \mlt)$ is flexible if and only if $[R_{\mlt}(x), L_{\mlt}(x)] =0$ for all $x \in \alg$. By \eqref{preflexible} this implies \eqref{ppo3} and is implied by \eqref{ppo3} if $\chr \fie \neq 2$. Polarization shows any flexible algebra satisfies \eqref{rlanti}, and if $\chr \fie \neq 2$, then the validity of \eqref{rlanti} implies $(\alg, \mlt)$ is flexible (taking $y  = x$ in \eqref{rlanti}). 
By \eqref{rlanti} and \eqref{structurable}, a flexible algebra has self-adjoint curvature.
\end{proof}

\begin{lemma}\label{prepoissonlemma}
If $\chr \fie \neq 2$, a $\fie$-algebra $(\alg, \mlt)$ is flexible if and only if it has self-adjoint curvature and $A_{\mlt}(x)$ is a derivation of the underlying commutative algebra $(\alg, \sprod)$ for all $x \in \alg$.
\end{lemma}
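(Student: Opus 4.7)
The plan is to reduce everything to the operator identity \eqref{ascommutator}. The condition that $A_{\mlt}(x)$ is a derivation of $(\alg, \sprod)$ reads, after expanding $[x, y \sprod z] = [x,y]\sprod z + y \sprod [x, z]$ in operator form, precisely as
\begin{align*}
[A_{\mlt}(x), S_{\mlt}(y)] = S_{\mlt}(A_{\mlt}(x)y) \quad \text{for all } x, y \in \alg,
\end{align*}
so the left-hand side of \eqref{ascommutator} vanishes. Thus $A_{\mlt}(x)$ is a derivation of $(\alg, \sprod)$ for all $x$ if and only if
\begin{align*}
\acl(x,y) - \acr(x,y) = [R_{\mlt}(x), L_{\mlt}(y)] + [R_{\mlt}(y), L_{\mlt}(x)] \quad \text{for all } x, y \in \alg. \tag{$\ast$}
\end{align*}

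For the forward direction, assume $(\alg, \mlt)$ is flexible. By Lemma \ref{flexiblelemma}, the curvature is self-adjoint, so the left-hand side of $(\ast)$ vanishes, and condition \eqref{rlanti} of that lemma asserts exactly that the right-hand side of $(\ast)$ vanishes. Hence $(\ast)$ holds, and the derivation property follows from \eqref{ascommutator}.

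For the converse, assume the curvature is self-adjoint and $A_{\mlt}(x)$ is a derivation of $(\alg, \sprod)$ for every $x$. Self-adjointness of the curvature kills the left-hand side of $(\ast)$, so the derivation hypothesis forces
\begin{align*}
[R_{\mlt}(x), L_{\mlt}(y)] + [R_{\mlt}(y), L_{\mlt}(x)] = 0 \quad \text{for all } x, y \in \alg,
\end{align*}
which is condition \eqref{rlanti} of Lemma \ref{flexiblelemma}. Since $\chr \fie \neq 2$, that lemma gives flexibility.

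There is no real obstacle beyond correctly unwinding the definitions; the only subtle point is the characteristic hypothesis, which is used exactly to pass from \eqref{rlanti} back to flexibility via Lemma \ref{flexiblelemma} (polarization requires dividing by $2$). The identity \eqref{ascommutator}, already established, does all the algebraic work.
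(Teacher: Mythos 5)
Your proof is correct and is exactly the argument the paper intends: its proof of this lemma reads "immediate from Lemma \ref{flexiblelemma} and \eqref{ascommutator}," and you have simply filled in the details, correctly identifying the derivation condition with the vanishing of the left-hand side of \eqref{ascommutator} and then invoking the equivalence of \eqref{rlanti} with flexibility. The only (harmless) imprecision is the closing parenthetical: the characteristic hypothesis enters in specializing \eqref{rlanti} at $y=x$ and dividing by $2$, rather than in a polarization step.
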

\begin{proof}
This is immediate from Lemma \ref{flexiblelemma} and \eqref{ascommutator}.
\end{proof}

\begin{example}
By \cite[Proposition $1$]{Allison}, the right-hand side of \eqref{structurable} vanishes for any three elements in the subspace of a structurable algebra fixed by its involution. 
\end{example}

The analogue of the algebraic Bianchi identity for a curvature tensor is the vanishing of the complete antisymmetrization of $\acl(x, y)z$. This need not occur in general for, as is the case for an affine connection, there is an obstruction related to compatibility with an underlying Lie bracket.

An algebra $(\alg, \mlt)$ is \emph{Lie-admissible} if the underlying bracket $[\dum, \dum]$ satisfies the Jacobi identity, so that $(\alg, [\dum, \dum])$ is a Lie algebra. That $(\alg, \mlt)$ be Lie-admissible is equivalent to the vanishing of the completely antisymmetric tensor 
\begin{align}\label{lieadmissible}
\begin{split}
\at(x_{1}, x_{2}, x_{3}) &= \left(A_{\mlt}([x_{1}, x_{2}]) - [A_{\mlt}(x_{1}), A_{\mlt}(x_{2})]\right)x_{3} = \cycle_{1, 2, 3}[[x_{1}, x_{2}], x_{3}] \\
&= \sum_{\si \in S_{3}}\sign(\si)[x_{\si(1)}, x_{\si(2)}, x_{\si(3)}] = - \cycle_{1, 2, 3}\acl(x_{1}, x_{2}) x_{3}= - \cycle_{1, 2, 3}\acr(x_{1}, x_{2})x_{3}.
\end{split}
\end{align}
(The notation $\cycle_{1, 2, 3}$ indicates the sum over the cyclic permutations of the indicated indices.) The validity of the identity $- \cycle_{1, 2, 3}\acl(x_{1}, x_{2}) x_{3}=\cycle \left([x_{1}, x_{2}, x_{3}] - [x_{2}, x_{1}, x_{3}]\right) = \cycle [[x_{1}, x_{2}], x_{3}]$ in any algebra is noted in \cite[Equation $2.19$]{Okubo-octonion}. That the vanishing of the first expression on the second line of \eqref{lieadmissible} is equivalent to Lie admissibility is stated in \cite[Example $6$]{Markl-Remm}.

Note that $\bar{\at} = \at$.
The final two equalities of \eqref{lieadmissible} show that the identity $\at = 0$ is analogous to the algebraic Bianchi identity for a torsion-free connection, so the tensor $\at$ should be regarded as the algebraic analogue of the torsion of an affine connection. 

A straightforward computation shows $\at$ is the self-adjoint part of 
\begin{align}\label{aclmu}
\begin{split}
[A_{\mlt}(x), L_{\mlt}(y)] - L_{\mlt}(A_{\mlt}(x)y) 
\end{split}
\end{align}
which vanishes for all $x, y \in \alg$ if and only if $A_{\mlt}(x)$ is a derivation of $(\alg, \mlt)$ for all $x \in \alg$.

\begin{example}\label{antiflexibleadmissibleexample}
It is immediate from \eqref{lieadmissible} that $\at = 0$ for an antiflexible algebra, so an antiflexible algebra is Lie-admissible.
\end{example}

The analogue of the covariant derivative of the curvature tensor, $(\nabla_{X}R)(Y, Z)$, is $[L_{\mlt}(x), \acl(y, z)] - \acl(L_{\mlt}(x)y, z) - \acl(y, L_{\mlt}(x)z)$. The identities \eqref{diffbianchi} are analogous to the differential Bianchi identity, and reinforce the idea that $\at$ should be viewed as something like the torsion of $\mlt$. 
\begin{lemma}
In any algebra $(\alg, \mlt)$, for all $x, y, z \in \alg$ there hold
\begin{align}\label{diffbianchi}
\begin{aligned}
\cycle_{x, y, z}&\left([L_{\mlt}(x), \acl(y, z)] - \acl(L_{\mlt}(x)y, z) - \acl(y, L_{\mlt}(x)z) \right) = L_{\mlt}(\at(x, y, z)),\\
\cycle_{x, y, z}&\left([R_{\mlt}(x), \acr(y, z)] - \acr(R_{\mlt}(x)y, z) - \acr(y, R_{\mlt}(x)z) \right) = R_{\mlt}(\at(x, y, z)).
\end{aligned}
\end{align}
\end{lemma}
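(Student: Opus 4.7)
The plan is to prove the first identity in \eqref{diffbianchi} by a direct calculation in the operator algebra $\eno(\alg)$, and to deduce the second identity by specializing the first to the adjoint multiplication $\amlt$.

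For the first identity, start from the operator form $\acl(x,y) = [L_{\mlt}(x), L_{\mlt}(y)] - L_{\mlt}([x,y])$, which is immediate from \eqref{acdefined}. This yields
\begin{align*}
[L_{\mlt}(x), \acl(y,z)] = [L_{\mlt}(x), [L_{\mlt}(y), L_{\mlt}(z)]] - \acl(x, [y,z]) - L_{\mlt}([x,[y,z]]).
\end{align*}
Upon cyclic summation over $(x,y,z)$, the triple commutator vanishes by the Jacobi identity in the associative algebra $\eno(\alg)$. Antisymmetry of $[\dum,\dum]$ gives $\cycle_{x,y,z}[x,[y,z]] = -\cycle_{x,y,z}[[x,y],z] = -\at(x,y,z)$, using the expression of $\at$ as an iterated commutator supplied by \eqref{lieadmissible}. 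Hence
\begin{align*}
\cycle_{x,y,z} [L_{\mlt}(x), \acl(y,z)] = L_{\mlt}(\at(x,y,z)) - \cycle_{x,y,z} \acl(x, [y,z]).
\end{align*}

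It then suffices to verify the purely algebraic identity
\begin{align*}
\cycle_{x,y,z}\left\{\acl(x, [y,z]) + \acl(x\mlt y, z) + \acl(y, x\mlt z)\right\} = 0.
\end{align*}
This is done by expanding $[y,z] = y\mlt z - z\mlt y$ and using the antisymmetry $\acl(a,b) = -\acl(b,a)$, which is evident from \eqref{acdefined}, to rewrite each $\acl(x\mlt y, z)$ and its cyclic permutations. The result is a sum of twelve terms of the form $\pm\acl(u, v\mlt w)$ with $(u,v,w)$ a permutation of $(x,y,z)$; a case-by-case inspection shows that each such term appears exactly twice, with opposite signs, so the total vanishes.

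The second identity in \eqref{diffbianchi} follows by applying the first to the adjoint algebra $(\alg, \amlt)$. Since $L_{\amlt} = -R_{\mlt}$, the left curvature of $\amlt$ coincides with $\acr$, and since $\amlt$ shares its underlying bracket with $\mlt$ the associated tensor $\at$ is unchanged. The three $L_{\amlt}$-factors on the left-hand side of the $\amlt$-version each contribute a sign, as does the $L_{\amlt}$ on the right, so after multiplying by $-1$ one recovers precisely the $(R_{\mlt}, \acr)$ statement. The only real obstacle in the whole argument is the term-counting in the cancellation step, which is entirely mechanical once the operator reformulation of $\acl$ has been adopted.
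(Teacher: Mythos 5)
Your proof is correct and follows essentially the same route as the paper's: rewrite $\acl$ in the operator form $[L_{\mlt}(x),L_{\mlt}(y)]-L_{\mlt}([x,y])$, kill the triple commutators with the Jacobi identity in $\eno(\alg)$, identify the residual $L_{\mlt}(\cycle[x,[y,z]])$ term with $-L_{\mlt}(\at)$ via \eqref{lieadmissible}, cancel the remaining twelve $\acl(u,v\mlt w)$ terms, and obtain the second identity by passing to $\amlt$ and using $\bar{\at}=\at$. The only cosmetic difference is that the paper packages the twelve-term cancellation through the identity $[x\mlt y,z]-[x,z\mlt y]=[x,y,z]-[z,y,x]$ rather than by direct inspection.
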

\begin{proof}
A computation using \eqref{acdefined}, the Jacobi identity in $\eno(\alg)$, the identity $[x\mlt y, z] - [x, z\mlt y] = [x, y, z] - [z, y, x]$, and \eqref{lieadmissible} shows the first identity of \eqref{diffbianchi}.
Taking $\amlt$ in place of $\mlt$ and recalling $\bar{\at} =\at$ yields the second identity of \eqref{diffbianchi}.
\end{proof}

An algebra $(\alg, \mlt)$ is \emph{associator-cyclic} if it satisfies the identity
\begin{align}\label{commassoc}
0 = [x, y, z] + [z, x, y] + [y, z, x] = \left(A_{\mlt}(x\mlt y) - A_{\mlt}(x)L_{\mlt}(y) - A_{\mlt}(y)R_{\mlt}(x)\right)z,
\end{align}
for all $x ,y, z \in \alg$.
(There seems to be no standard terminology for algebras satisfying \eqref{commassoc}, and \emph{associator-cyclic} seems reasonably evocative terminology.)

\begin{example}
Commutative algebras and Lie algebras are associator-cyclic, but a general anticommutative algebra need not be associator-cyclic.
\end{example}

\begin{lemma}
If $\chr \fie \neq 3$, an associator-cyclic $\fie$-algebra has self-adjoint curvature if and only if it is flexible.
\end{lemma}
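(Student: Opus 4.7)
The plan is to note that one direction—flexible implies self-adjoint curvature—is already given by Lemma \ref{flexiblelemma} and requires no hypothesis on the characteristic. The interesting content is thus the converse: assuming the algebra is both associator-cyclic and has self-adjoint curvature, deduce flexibility, with the characteristic hypothesis $\chr \fie \neq 3$ entering at the very end.

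To that end, I would specialize $z = x$ in the two defining identities. From \eqref{acdefined} the self-adjoint curvature condition $\acl(x,y) = \acr(x,y)$ is the identity
\begin{align*}
-[x,y,z] + [y,x,z] = -[z,x,y] + [z,y,x] \quad \text{for all } x,y,z \in \alg,
\end{align*}
and setting $z = x$ rearranges to $[y,x,x] + [x,x,y] = 2\,[x,y,x]$. Setting $z = x$ in the associator-cyclic identity \eqref{commassoc} gives $[x,y,x] + [x,x,y] + [y,x,x] = 0$, which rewrites as $[y,x,x] + [x,x,y] = -[x,y,x]$. Equating the two expressions for $[y,x,x] + [x,x,y]$ yields $3\,[x,y,x] = 0$, so the assumption $\chr \fie \neq 3$ forces $[x,y,x] = 0$ for all $x,y \in \alg$, that is, $(\alg,\mlt)$ is flexible.

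There is essentially no obstacle here: the argument is a short substitution and linear combination. The only place to be careful is keeping track of signs and argument orders in the associator (since $[x,y,z] = (x\mlt y)\mlt z - x\mlt (y\mlt z)$ is not symmetric in its entries), and noting that the converse implication genuinely needs $\chr\fie \neq 3$, whereas the forward implication does not. This also makes it clear where the hypothesis is used, which fits with the paper's general philosophy of tracking characteristic assumptions explicitly.
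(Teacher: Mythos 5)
Your proof is correct and follows essentially the same route as the paper's: the forward direction is delegated to Lemma \ref{flexiblelemma}, and the converse is obtained by setting $z=x$ in both the associator-cyclic identity \eqref{commassoc} and the self-adjoint curvature identity (the paper uses the form \eqref{structurable}), yielding $3[x,y,x]=0$ and hence flexibility when $\chr\fie\neq 3$. The sign bookkeeping in your substitution is accurate, so there is nothing to add.
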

\begin{proof}
By Lemma \ref{flexiblelemma}, a flexible algebra has self-adjoint curvature, so it suffices to prove that an associator-cyclic algebra with self-adjoint curvature is flexible. Taking $x = z$ in \eqref{commassoc} and \eqref{structurable} yields $[x, y, x] = -[x, x, y] - [y, x, x] = -2[x, y, x]$, so that $3[x, y, x] = 0$ for all $x, y \in \alg$. If $\chr\fie \neq 3$, this implies $(\alg, \mlt)$ is flexible.
\end{proof}

Antisymmetrizing \eqref{commassoc} in $x$ and $y$ shows that an associator-cyclic algebra is Lie-admissible. Lemma \ref{lieadmissiblecycliclemma} shows that these conditions are equivalent for flexible $\fie$-algebras when $\chr \fie \neq 2$. That when $\chr \fie \neq 2$ an algebra is flexible and Lie-admissible if and only if there holds \eqref{cycadm2} of Lemma \ref{lieadmissiblecycliclemma} is due to Okubo and Myung in \cite[p. $80$]{Myung} and \cite[Theorem $4.1$]{Okubo-Myung-flexible-adjoint}.
\begin{lemma}[{\cite[p. $80$]{Myung}}, {\cite[Theorem $4.1$]{Okubo-Myung-flexible-adjoint}}, \cite{Myung-book}]\label{lieadmissiblecycliclemma}
For a $\fie$-algebra $(\alg, \mlt)$ the following are equivalent.
\begin{enumerate}
\item\label{cycadm1} $(\alg, \mlt)$ is flexible and associator-cyclic.
\item\label{cycadm2} For all $x \in \alg$, $A_{\mlt}(x) = L_{\mlt}(x) - R_{\mlt}(x)$ is a derivation of $(\alg, \mlt)$.
\item\label{cycadm3} $(\alg, \mlt)$ has self-adjoint curvature. 
\end{enumerate}
These imply
\begin{enumerate}
\setcounter{enumi}{3}
\item\label{cycadm4} $(\alg, \mlt)$ is flexible and Lie-admissible.
\end{enumerate}
If $\chr \fie \neq 2$, then \eqref{cycadm4} implies \eqref{cycadm1} and \eqref{cycadm2}.
In particular, if $\chr \fie \neq 2$, a flexible algebra is Lie-admissible if and only if it is associator-cyclic.
\end{lemma}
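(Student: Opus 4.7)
The plan is to establish $(1) \Leftrightarrow (2) \Leftrightarrow (3)$ by associator manipulations combined with the operator identities \eqref{ascommutator}, \eqref{preflexible}, and \eqref{structurable}, then to deduce $(4)$ from $(1)$ trivially, and finally to establish $(4) \Rightarrow (1)$ in characteristic $\neq 2$ by a polarization argument.

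For $(1) \Leftrightarrow (2)$, expanding the derivation condition $A_{\mlt}(x)(y \mlt z) = (A_{\mlt}(x)y) \mlt z + y \mlt (A_{\mlt}(x)z)$ rewrites $(2)$ as the associator identity $[x, y, z] = [y, x, z] - [y, z, x]$ for all $x, y, z \in \alg$. Setting $z = x$ gives $[x, y, x] = 0$, flexibility, whose polarization is $[y, x, z] + [z, x, y] = 0$. Cyclically summing the derivation identity in $(x, y, z)$ and substituting polarized flexibility collapses the cyclic sum $\sum_{\text{cyc}}[x, y, z]$ to $[y, x, z] + [z, x, y] = 0$, i.e., associator-cyclicity; the converse is by substitution. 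For $(2) \Leftrightarrow (3)$, condition $(2)$ expands to the operator identities $[A_{\mlt}(x), L_{\mlt}(y)] = L_{\mlt}([x, y])$ and $[A_{\mlt}(x), R_{\mlt}(y)] = R_{\mlt}([x, y])$; their sum and difference yield $[A_{\mlt}(x), S_{\mlt}(y)] = S_{\mlt}([x, y])$ (so $A_{\mlt}(x)$ is a derivation of $\sprod$) and $[A_{\mlt}(x), A_{\mlt}(y)] = A_{\mlt}([x, y])$ (Lie-admissibility), and antisymmetrizing the first in $x, y$ together with \eqref{structurable} gives $\acl = \acr$. For $(3) \Rightarrow (2)$, \eqref{structurable} with $\acl = \acr$ gives $[A_{\mlt}(x), S_{\mlt}(y)] - [A_{\mlt}(y), S_{\mlt}(x)] = 2 S_{\mlt}([x, y])$; adding the always-valid identity \eqref{preflexible} produces $[A_{\mlt}(x), S_{\mlt}(y)] = S_{\mlt}([x, y]) - [R_{\mlt}(x), L_{\mlt}(y)] - [R_{\mlt}(y), L_{\mlt}(x)]$, so once flexibility is in hand (whereby the $[R, L]$ terms vanish by Lemma~\ref{flexiblelemma}), $A_{\mlt}(x)$ is a derivation of $\sprod$, and combined with Lie-admissibility this yields $(2)$.

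Each of $(1)$, $(2)$, $(3)$ implies $(4)$ since associator-cyclicity implies Lie-admissibility by antisymmetrizing \eqref{commassoc}. For $(4) \Rightarrow (1)$ in characteristic $\neq 2$, polarized flexibility gives $\sum_{\text{cyc}}[z, y, x] = -\sum_{\text{cyc}}[x, y, z]$, while $\at = 0$ gives $\sum_{\text{cyc}}[x, y, z] = \sum_{\text{cyc}}[z, y, x]$; combining, $2 \sum_{\text{cyc}}[x, y, z] = 0$, and $\chr \fie \neq 2$ yields associator-cyclicity. The main obstacle is the step $(3) \Rightarrow (2)$, where one must extract flexibility (and Lie-admissibility) from self-adjoint curvature: in its polarized form $(3)$ asserts only that $F(x, y, z) := [x, y, z] + [z, y, x]$ is totally symmetric in $(x, y, z)$, which is a priori weaker than $F \equiv 0$ (flexibility). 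The critical step should combine \eqref{preflexible} with the operator form of $(3)$ to force $[R_{\mlt}(x), L_{\mlt}(y)] + [R_{\mlt}(y), L_{\mlt}(x)] = 0$, giving flexibility via Lemma~\ref{flexiblelemma}, after which the remainder of the $(3) \Rightarrow (2)$ argument above goes through.
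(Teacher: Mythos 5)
Your handling of \eqref{cycadm1} $\Leftrightarrow$ \eqref{cycadm2}, of \eqref{cycadm2} $\Rightarrow$ \eqref{cycadm3}, and of the statements involving \eqref{cycadm4} is correct and is essentially the paper's argument in element-wise form: the paper packages your computation (derivation defect $=$ cyclic associator sum minus flexibility defect) as the single operator identity \eqref{cyclicderiv}, and the step \eqref{cycadm4} $\Rightarrow$ \eqref{cycadm1} is carried out exactly as you describe. One small slip: substituting polarized flexibility $[y,x,z]=-[z,x,y]$ directly into the derivation identity $[x,y,z]=[y,x,z]-[y,z,x]$ already gives $[x,y,z]+[z,x,y]+[y,z,x]=0$ with no cyclic summation; the summation you describe produces $3\cycle_{x,y,z}[x,y,z]=0$, which needs $\chr\fie\neq 3$, and the displayed conclusion ``$[y,x,z]+[z,x,y]=0$'' is polarized flexibility, not associator-cyclicity.

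The genuine gap is the one you flag yourself: \eqref{cycadm3} $\Rightarrow$ \eqref{cycadm1}/\eqref{cycadm2}. Your proposed repair---forcing $[R_{\mlt}(x),L_{\mlt}(y)]+[R_{\mlt}(y),L_{\mlt}(x)]=0$ from \eqref{preflexible} and the operator form of \eqref{cycadm3}---cannot succeed: adding \eqref{preflexible} to the identity $[A_{\mlt}(x),S_{\mlt}(y)]-[A_{\mlt}(y),S_{\mlt}(x)]=2S_{\mlt}([x,y])$ coming from $\acl=\acr$ yields only $[A_{\mlt}(x),S_{\mlt}(y)]=S_{\mlt}([x,y])-[R_{\mlt}(x),L_{\mlt}(y)]-[R_{\mlt}(y),L_{\mlt}(x)]$, and nothing separates the two contributions. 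Indeed the implication is false for \eqref{cycadm3} as literally defined: in any anticommutative algebra $L_{\mlt}=-R_{\mlt}$, so $\acl=[L_{\mlt}(x),L_{\mlt}(y)]-L_{\mlt}([x,y])=\acr$ identically, yet an anticommutative algebra is associator-cyclic only when it is Lie-admissible; thus $\im\cayley$ with the commutator bracket (flexible but not Lie-admissible, as the paper itself notes) satisfies \eqref{cycadm3} but not \eqref{cycadm1}. This is consistent with Lemma \ref{flexiblelemma}, which already grants self-adjoint curvature to every flexible algebra. So your suspicion that \eqref{cycadm3} is ``a priori weaker'' is correct and cannot be argued away; the defect lies in the statement rather than in your technique. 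The paper's own justification of \eqref{cycadm2} $\Leftrightarrow$ \eqref{cycadm3} is the one-line claim that it is ``immediate from \eqref{aclmu} and its adjoint relation,'' but the vanishing of \eqref{aclmu} is \eqref{cycadm2} itself, and its relation to $\acl=\acr$ involves exactly the $[R_{\mlt},L_{\mlt}]$ commutators you cannot control. What is actually provable, and what the cited Okubo--Myung theorem asserts, is the chain \eqref{cycadm1} $\Leftrightarrow$ \eqref{cycadm2} $\Rightarrow$ \eqref{cycadm4}, with \eqref{cycadm4} $\Rightarrow$ \eqref{cycadm1} when $\chr\fie\neq 2$; your proof of that chain is sound.
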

\begin{proof}
For any algebra there holds
\begin{align}\label{cyclicderiv}
\begin{split}%
\cycle_{x, y, z}[x, y, z] %&= \mu(x, y)z - \acl(x, y)z + [L_{\mlt}(x), R_{\mlt}(z)]y \\
&= \left([A_{\mlt}(x), L_{\mlt}(y)] - L_{\mlt}(A_{\mlt}(x)y) \right)z + [L_{\mlt}(x), R_{\mlt}(z)]y.
\end{split}
\end{align}
It is immediate from \eqref{cyclicderiv} that if $(\alg, \mlt)$ is flexible and associator-cyclic, then it satisfies \eqref{cycadm2}. If $(\alg, \mlt)$ satisfies \eqref{cycadm2}, then $-[R_{\mlt}(x), L_{\mlt}(x)] = [A_{\mlt}(x), L_{\mlt}(x)] = L_{\mlt}(A_{\mlt}(x)x) = 0$ for all $x \in \alg$, so $(\alg, \mlt)$ is flexible, and with \eqref{cyclicderiv} it follows that $(\alg, \mlt)$ is associator-cyclic. This proves the equivalence of \eqref{cycadm1} and \eqref{cycadm2}. The equivalence of \eqref{cycadm2} and \eqref{cycadm3} is immediate from \eqref{aclmu} and its adjoint relation. 

By \eqref{lieadmissible}, $\at(x, y, z) = \cycle_{x, y, z}[x, y, z] - \cycle_{x, y, z}[y, x , z]$ (alternatively, by antisymmetrizing \eqref{commassoc} in $x$ and $y$), so an associator-cyclic algebra is Lie-admissible, and \eqref{cycadm1} implies \eqref{cycadm4}.
If $(\alg, \mlt)$ is Lie-admissible and flexible, then $0 = \cycle_{x, y, z}([x, y, z] - [y, x , z]) = 2\cycle_{x, y, z}([x, y, z])$, so, if $\chr \fie \neq 2$, $(\alg, \mlt)$ is associator-cyclic.
\end{proof}

\begin{remark}
The essential content of Lemma \ref{lieadmissiblecycliclemma} is due to Okubo and Myung \cite{Myung, Myung-book, Okubo-Myung-flexible-adjoint}. It is formulated here in a manner consistent with the perspective taken here. The equivalence of \eqref{cycadm1} and \eqref{cycadm2} and that these imply \eqref{cycadm3} is also proved, with different terminology, in \cite{Goze-Remm-poisson} (see also \cite[Propositions $1$ and $2$]{Remm-Morand}). (In \cite{Remm-Morand} an algebra is called \emph{admissible Poisson} if there holds \eqref{cycadm2} and \emph{weakly associative} if there vanishes the right-hand side of \eqref{cyclicderiv} (so it satisfies \eqref{cycadm3} of Lemma \ref{lieadmissiblecycliclemma}).)
\end{remark}

\begin{example}
By the Jacobi identity, the associator of a Lie algebra $(\g, [\dum, \dum])$ is $[x, y, z] = [y, [x, z]]$ and a Lie algebra is associator-cyclic. This shows an associator-cyclic, flexible algebra need not be commutative.
\end{example}

\begin{example}
By \cite{Schafer-cayleydickson} an algebra obtained from a flexible algebra by the Cayley-Dickson process is flexible. 
An example of a flexible algebra that is not Lie-admissible (so not associator-cyclic) is the imaginary octonions $\im \cayley$ equipped with the commutator bracket $[x, y] = xy - yx$. (Although $\im \cayley$ is Malcev-admissible \cite{Myung-book}.)
\end{example}

\section{Sectional nonassociativity}\label{sectnasection}
This section introduces quantitative notions of nonassociativity for metrized algebras. The sectional nonassociativity was introduced in \cite{Fox-simplicial} for metrized commutative algebras.

In the first part of this section the base field $\fie$ has characteristic not equal to $2$ or $3$. Let $\alg$ be an $n$-dimensional $\fie$-vector space with a metric $h$. Define
\begin{align}
\begin{aligned}
S^{2}\ext^{2}\alg^{\ast} &= \left\{\begin{aligned} a &\in \tensor^{4}\alg^{\ast}:\\
& a(x, y, z, w) = -a(y, x, z, w) = -a(x, y, w, z) = a(z, w, x,y) \,\,\text{for all}\,\, x, y, z, w \in \alg
\end{aligned}\right\},
\\
\mcurv(\alg) &= \{a \in S^{2}\ext^{2}\alg^{\ast}:a(x, y, z, w) + a(y, z, x, w) + a(z, x, y, w) = 0 \,\,\text{for all}\,\, x, y, z, w \in \alg\}.
\end{aligned}
\end{align}
The subspace $\mcurv(\alg)$ comprises tensors of metric curvature tensor type. 
An element $a \in S^{2}\ext^{2}\alg^{\ast}$ can be viewed as the bilinear form on $\ext^{2}\alg$ defined on decomposable elements by $a(x \wedge y, z \wedge w) = 4a(x, y, z, w)$, so determines an associated quadratic form on $\ext^{2}\alg$.
Endow $\tensor^{4}\alg^{\ast}$ and its subspaces with the metrics given by complete contraction with the metric $h$. If $\chr \fie \neq 3$, $S^{2}\ext^{2}\alg^{\ast}$ is isomorphic to the orthogonal direct sum $\mcurv(\alg) \oplus \ext^{4}\alg^{\ast}$, and the orthogonal projections onto the factors, $\P:S^{2}\ext^{2}\alg^{\ast} \to \mcurv(\alg)$ and $\Q:S^{2}\ext^{2}\alg^{\ast} \to \ext^{4}\alg^{\ast}$ are defined for $a \in S^{2}\ext^{2}\alg^{\ast}$ by
\begin{align}\label{mcurvproj}
\begin{aligned}
\P(a)(x, y, z, w) &= \tfrac{1}{3}\left(2a(x, y, z, w) - a(y, z, x, w) - a(z, x, y, w)\right),\\
\Q(a)(x, y, z, w) & = \tfrac{1}{3}\left(a(x, y, z, w) +a(y, z, x, w) + a(z, x, y, w)\right).
\end{aligned}
\end{align}
The tensors $a$ and $\P(a)$ do not necessarily determine the same quadratic form on $\ext^{2}\alg$, but the quadratic forms they determine agree on decomposable elements of $\ext^{2}\alg$.

\begin{lemma}\label{s2ext2lemma}
Suppose $\chr \fie \notdivides 6$. Let $h$ be a metric on the $\fie$-vector space $\alg$ and let $\P:S^{2}\ext^{2}\alg^{\ast} \to \mcurv(\alg)$ be the orthogonal projection with respect to the inner products given by complete contraction with $h$. The quadratic forms determined on $\ext^{2}\alg$ by $a, b \in S^{2}\ext^{2}\alg^{\ast}$ coincide on decomposable elements of $\ext^{2}\alg$ if and only if $\P(a) = \P(b)$.
\end{lemma}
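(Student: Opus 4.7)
The plan is to reduce the statement to the well-known rigidity that any $R \in \mcurv(\alg)$ satisfying $R(x,y,x,y) = 0$ for all $x, y \in \alg$ is zero. Unpacking the definition, the quadratic form on $\ext^{2}\alg$ determined by $a \in S^{2}\ext^{2}\alg^{\ast}$ evaluates on a decomposable bivector $x \wedge y$ to $4a(x, y, x, y)$. Setting $c = a - b \in S^{2}\ext^{2}\alg^{\ast}$, equality of the two quadratic forms on decomposables is therefore equivalent to $c(x, y, x, y) = 0$ for all $x, y \in \alg$, and what must be shown is that this in turn is equivalent to $\P(c) = 0$.

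First I would dispatch the direction $\P(a) = \P(b) \Rightarrow$ agreement on decomposables. Using the orthogonal decomposition $S^{2}\ext^{2}\alg^{\ast} = \mcurv(\alg) \oplus \ext^{4}\alg^{\ast}$ given by \eqref{mcurvproj}, write $c = \P(c) + \Q(c)$. Since $\Q(c) \in \ext^{4}\alg^{\ast}$ is totally antisymmetric, swapping its first and third arguments gives $\Q(c)(x, y, x, y) = -\Q(c)(x, y, x, y)$, hence $\Q(c)(x, y, x, y) = 0$ (here $\chr \fie \neq 2$). Consequently $c(x,y,x,y) = \P(c)(x,y,x,y)$, and $\P(c) = 0$ immediately forces the quadratic forms determined by $a$ and $b$ to agree on decomposables.

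For the converse, it suffices to show that if $R \in \mcurv(\alg)$ satisfies $R(x, y, x, y) = 0$ for all $x, y \in \alg$, then $R = 0$; applied to $R = \P(c)$ this yields $\P(a) = \P(b)$. This is a standard polarization argument, and represents the main technical step. Replacing $x$ by $x + z$ and using the pair-interchange symmetry of $\mcurv(\alg)$ gives $R(x, y, z, y) = 0$ (needs $\chr \fie \neq 2$); polarizing once more in $y$ yields $R(x, y, z, w) + R(x, w, z, y) = 0$; combining this with the remaining antisymmetries and pair-interchange built into $\mcurv(\alg)$ shows that the three cyclic permutations of $R$ in its first three arguments all coincide, and substituting this into the Bianchi identity $R(x, y, z, w) + R(y, z, x, w) + R(z, x, y, w) = 0$ then forces $3R = 0$, so $R = 0$ since $\chr \fie \neq 3$. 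The main obstacle is bookkeeping in the polarization step, where one has to chain the pair-interchange and antisymmetry identities correctly; beyond that, both implications are routine. The standing hypothesis $\chr \fie \notdivides 6$ is used precisely to validate the two polarizations ($\chr \fie \neq 2$) and the final division by $3$.
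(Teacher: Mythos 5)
Your proof is correct and follows essentially the same route as the paper: both directions rest on killing the $\ext^{4}\alg^{\ast}$ component on decomposables and then polarizing $c(x,y,x,y)=0$ twice using the pair-interchange and antisymmetry of $S^{2}\ext^{2}\alg^{\ast}$. The only (cosmetic) difference is in how the converse is finished: the paper shows directly that the difference tensor $c=a-b$ is invariant under cyclic permutation of its first three arguments and reads off $\P(c)=0$ from the explicit formula \eqref{mcurvproj}, whereas you first pass to $\P(c)\in\mcurv(\alg)$ and invoke the classical rigidity that an element of $\mcurv(\alg)$ with vanishing ``sectional'' values is zero, using the Bianchi identity to divide by $3$; both uses of $\chr\fie\notdivides 6$ match.
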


\begin{proof}
For $\om \in \ext^{2}\alg$ and $a \in S^{2}\ext^{2}\alg^{\ast}$, 
\begin{align}
\begin{split}
\lb a, \om \tensor \om\ra & = \lb \P(a), \om \tensor \om\ra + \lb \Q(a), \om \tensor \om\ra = \lb \P(a), \om \tensor \om\ra + \tfrac{1}{6}\lb \Q(a), \om \wedge \om\ra,
\end{split}
\end{align}
in which $\lb \dum, \dum \ra$ denotes the pairing between dual vector spaces. Because $\om \wedge \om = 0$ if $\om$ is decomposable, this shows that the restrictions to decomposable $2$-forms of the quadratic forms determined by $a$ and $\P(a)$ coincide, and, moreover, that if $Q(a) = 0$, then these quadratic forms are equal. 

On the other hand, that the quadratic forms determined on $\ext^{2}\alg$ by $a, b \in S^{2}\ext^{2}\alg^{\ast}$ coincide on decomposable elements means that $c = a - b \in S^{2}\ext^{2}\alg^{\ast}$ satisfies $4c(x, y, x, y) = \lb c, (x\wedge y)\tensor (x\wedge y)\ra = 0$ for all $x, y \in \alg$. Linearizing in $x$ and $y$ and using the symmetries of $c$ yields
\begin{align}
0 & = c(x, y, z, w) + c(z, y, x, w) + c(x, w, z, y) + c(z, w, x, y) = 2c(x, y, z, w) - 2c(y, z, x, w)
\end{align}
for all $x, y, z, w \in \alg$. Hence $c(x, y, z, w) = c(y, z, x, w) = c(z, x, y, w)$, and in \eqref{mcurvproj} this yields $\P(a) - \P(b) = \P(c) = 0$. 
\end{proof}

For a metrized algebra $(\alg, \mlt, h)$, it follows straightforwardly from the invariance of $h$ that 
\begin{align}\label{metrizedass}
&h([x, y, z], w) = - h([y, z, w], x) = h([z, w, x], y) = -h([w, x, y], z), && \text{for all}\,\, x, y, z, w \in \alg.
\end{align}
Define $\acl^{\flat}(x, y, z, w) = h(\acl(x, y)z, w)$ and $\acr^{\flat}(x, y, z, w) = h(\acr(x, y)z, w)$.
From \eqref{metrizedass} and \eqref{acdefined} it follows that
\begin{align}\label{aclacrmu}
&\acl^{\flat}(x, y, z, w)=- \acr^{\flat}(x, y, w, z),
&& \text{for all}\,\, x, y, z, w \in \alg.
\end{align}
By \eqref{aclacrmu} and \eqref{metrizedass},
\begin{align}\label{aclacrijkl}
\begin{aligned}
&\acl^{\flat}(x, y,z, w) + \acr^{\flat}(x, y,z, w) =\acl^{\flat}(x, y,z, w) - \acl^{\flat}(x, y,w, z) \\
& =\acl^{\flat}(z, w,x, y) - \acl^{\flat}(z, w,y, x)= \acl^{\flat}(z, w,x, y) + \acr^{\flat}(z, w,x, y)
\end{aligned}
\end{align}
so that $\acl^{\flat} + \acr^{\flat}$ is an element of $S^{2}\ext^{2}\alg^{\ast}$ (as it need not satisfy the algebraic Bianchi identity, it need not be in $\mcurv(\alg)$). This follows also from the alternative expression
\begin{align}\label{aclacr}
\begin{split}
\acl^{\flat}(x, y,z, w) &+ \acr^{\flat}(x, y, z, w) \\
&= -h([x, y], [z, w]) + h(y \mlt z, w\mlt x) - h(x \mlt z, w \mlt y) + h(z\mlt y, x \mlt w) - h(z\mlt x, y \mlt w),
\end{split}
\end{align}
which is evidently antisymmetric in $x$ and $y$, and in $z$ and $w$, and symmetric in the pairs $xy$ and $zw$.

\begin{lemma}\label{presectlemma}
Suppose $\chr \fie \notdivides 6$. A metrized $\fie$-algebra $(\alg, \mlt, h)$ is Lie-admissible if and only if $\acl^{\flat} + \acr^{\flat} \in \mcurv(\alg)$. 
\end{lemma}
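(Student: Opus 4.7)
The plan is to reduce the Bianchi-type condition defining $\mcurv(\alg)$ directly to the Lie-admissibility tensor $\at$. The computation immediately preceding the statement already shows that $\acl^{\flat} + \acr^{\flat}$ lies in $S^{2}\ext^{2}\alg^{\ast}$, so what remains is to test the algebraic Bianchi identity
\begin{align}\label{bianchicheck}
(\acl^{\flat} + \acr^{\flat})(x,y,z,w) + (\acl^{\flat} + \acr^{\flat})(y,z,x,w) + (\acl^{\flat} + \acr^{\flat})(z,x,y,w) = 0.
\end{align}

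The key observation is that by the last two equalities of \eqref{lieadmissible}, the cyclic sum $\cycle_{x,y,z}\acl(x,y)z$ equals $-\at(x,y,z)$, and similarly $\cycle_{x,y,z}\acr(x,y)z = -\at(x,y,z)$. Pairing with $w$ via $h$ and summing, the left-hand side of \eqref{bianchicheck} equals $-2\,h(\at(x,y,z),w)$ (the factor $2$ is harmless because $\chr \fie \neq 2$). By nondegeneracy of $h$, this quantity vanishes for all $w$ if and only if $\at(x,y,z) = 0$ for all $x,y,z \in \alg$, which is precisely the Lie-admissibility of $(\alg, \mlt)$.

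There is no substantial obstacle: the whole argument is a direct application of the formulas \eqref{lieadmissible} already established, together with nondegeneracy of $h$. The only points worth noting are that Lemma \ref{s2ext2lemma} and the setup require $\chr \fie \notdivides 6$, so in particular $2$ is invertible, which legitimises dividing by the factor of $2$ produced by combining the two cyclic sums.
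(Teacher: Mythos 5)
Your proof is correct and is essentially the paper's argument: the paper also reduces membership in $\mcurv(\alg)$ to the vanishing of the cyclic sum of $\acl^{\flat}+\acr^{\flat}$ (packaged through the projection $\P$ of \eqref{mcurvproj}), identifies that cyclic sum with $-2\at$ via \eqref{lieadmissible}, and concludes by nondegeneracy of $h$ and invertibility of $2$. No gaps.
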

\begin{proof}
By \eqref{diffbianchi}, the tensor $\mcl$ defined by $\mcl^{\flat} = 3\P(\acl^{\flat}+ \acr^{\flat}) \in \mcurv(\alg)$ satisfies
\begin{align}
\begin{split}\label{mcldefined}
\mcl(x, y)z
& = -2(\acl(x, y)z + \acr(x, y)z) + \acl(y, z)x + \acr(y, z)x + \acl(z, x)y + \acr(z, x)y) \\
&= -3(\acl(x, y)z +\acr(x, y)z) - 2\at(x, y, z),
\end{split}
\end{align}
for all $x, y, z \in \alg$.
The claim follows because $(\alg, \mlt)$ is Lie-admissible if and only if $\at$ vanishes identically.
\end{proof} 

Let $(\alg, \mlt, h)$ be a metrized algebra of dimension at least $2$. If $x, y \in \alg$ span a two-dimensional subspace $\ste \subset \alg$ and $\bar{x} = ax + by$ and $\bar{y} = cx + dy$ span the same subspace $\ste$, then 
\begin{align}\label{xychange1}
&h(\bar{x}, \bar{x})h(\bar{y}, \bar{y}) - h(\bar{x}, \bar{y})^{2} = (ad - bc)^{2}\left(h(x, x)h(y, y) - h(x, y)^{2}\right),\\
\label{xychange2}
&h(\bar{x}\mlt \bar{x}, \bar{y}\mlt \bar{y}) - h(\bar{x}\mlt \bar{y}, \bar{x}\mlt \bar{y}) = (ad - bc)^{2}\left(h(x\mlt x , y\mlt y) - h(x\mlt y, y \mlt x)\right),
\end{align}
from which it follows that \eqref{xychange2} divided by \eqref{xychange1} is well-defined and depends only the subspace $\ste$, provided the denominator is non-zero, which is the case as long $\ste$ is $h$-nondegenerate. (A symmetric bilinear form $h$ on a two-dimensional $\fie$-vector space $\ste$ is nondegenerate if and only if $h(x, x)h(y, y) - h(x, y)^{2} \neq 0$ for any basis $\{x, y\}$ of $\ste$; this is a special case of the statement that a symmetric bilinear form is nondegenerate if and only if the determinant of its Gram matrix with respect to any basis is nonvanishing \cite[Proposition I.$1.2$]{Lam-quadraticforms}.) These observations show that Definition \ref{sectdefinition} is well made. By \eqref{metrizedass},
\begin{align}\label{hxxyy}
&h([x, x, y], y) = -h([y, x, x], y), && \text{for all}\,\, x, y \in \alg.
\end{align}
The second equality in \eqref{sectnadefined} is \eqref{hxxyy}.

\begin{definition}\label{sectdefinition}
Let $(\alg, \mlt, h)$ be a metrized $\fie$-algebra of dimension at least $2$. Define the \emph{sectional nonassociativity} $\sect(x, y)$ of the $h$-nondegenerate subspace $\spn\{x, y\} \subset \alg$ to be
\begin{align}\label{sectnadefined}
\begin{split}
\sect(x, y) & =  \sect_{\mlt, h}(x, y)  = -\tfrac{1}{2}\frac{h(\acl(x, y) x, y) + h(\acr(x, y) x, y)}{h(x\mlt x , y\mlt y) - h(x, y)^{2}}\\
&=  \frac{h([x, x, y], y)}{h(x, x)h(y, y)  - h(x, y)^{2}}=\frac{-h([y, x, x], y)}{h(x, x)h(y, y)  - h(x, y)^{2}}= \frac{h(x\mlt x , y\mlt y) - h(x\mlt y, y \mlt x)}{h(x, x)h(y, y)  - h(x, y)^{2}}.
\end{split}
\end{align}
\end{definition}
The proviso that $\spn\{x, y\}$ be $h$-nondegenerate is automatic when $h$ is anisotropic.

By definition, the sectional nonassociativity is invariant under isometric algebra isomorphisms of metrized algebras.

The dependence of $\sect =  \sect_{\mlt, h}$ on $\mlt$ and $h$ is indicated with subscripts when helpful. For $r, s \in \reat$, $\sect_{s\mlt, rh} = s^{2}r^{-1}\sect_{\mlt, h}$, where $s\mlt$ means the multiplication $sx\mlt y$.

Given an algebra $(\alg, \mlt)$, with underlying anticommutative and commutative products $[\dum, \dum]$ and $\sprod$, a straightforward computation shows that that $h$ is invariant with respect to both of $[\dum, \dum]$ and $\sprod$, consequently the sectional nonassociativities $\sect_{[\dum, \dum], h}(x, y)$ and $\sect_{\sprod, h}(x, y)$ are defined. From the identity
\begin{align}\label{prepoissonidentity}
\begin{aligned}
[x, y, z]_{\sprod} &+ [x, y, z]_{[\dum, \dum]} = [x, y, z]_{\sprod} + [[x, y], z] + [[y, z], x] \\
&= [x, y, z]_{\sprod} - [[z, x], y] + \at_{\mlt}(x, y, z) =2[x, y, z]_{\mlt} - 2[z, y, x]_{\mlt},
\end{aligned}
\end{align}
(subscripts indicate the dependence on the different multiplications) together with \eqref{hxxyy} there follows
\begin{align}
h([x, x, y]_{\sprod}, y) + h([x, x, y]_{[\dum, \dum]},y) = 4h([x, x, y]_{\mlt}, y),
\end{align}
and with \eqref{sectnadefined} this yields
\begin{align}
\begin{aligned}
4\sect_{\mlt, h}(x, y) & =  \sect_{\sprod, h}(x, y) + \sect_{[\dum, \dum], h}(x, y)\\
& =  \frac{h(x\sprod x , y\sprod y) - h(x\sprod y, y \sprod x)}{h(x, x)h(y, y)  - h(x, y)^{2}} +  \frac{h([x, y], [x, y])}{h(x, x)h(y, y)  - h(x, y)^{2}}.
\end{aligned}
\end{align}
It follows that for a Euclidean metrized algebra, $4\sect_{\mlt, h}(x, y) \geq \sect_{\sprod, h}(x, y)$ for all linearly independent $x$ and $y$, with equality if and only if $x$ and $y$ commute.

For $h \in S^{2}\alg^{\ast}$, define $h \kwedge h \in \mcurv(\alg)$ by
\begin{align}
&(h\kwedge h)(x, y, z, w) = h(x, z)h(y, w) - h(x, w)h(y, z), && \text{for all}\,\, x, y, z, w \in \alg.
\end{align}
By definition $(h\kwedge h)(x, y, x, y) = |x|^{2}|y|^{2} - h(x, y)^{2}$.

\begin{lemma}\label{constantsectcharlemma}
Let $\fie$ be an infinite field. A metrized $\fie$-algebra $(\alg, \mlt, h)$ of dimension $n \geq 2$ has constant sectional nonassociativity $c  \in \fie$ if and only if the quadratic forms determined on $\ext^{2}\alg$ by $-\tfrac{1}{2}(\acl^{\flat} + \acr^{\flat})$ and $c(h\kwedge h)$ coincide on decomposable elements of $\ext^{2}\alg$.
\end{lemma}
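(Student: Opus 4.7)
The plan is to reduce both conditions to the vanishing on all of $\alg\times\alg$ of the single polynomial
\begin{align*}
f(x, y) = -\tfrac{1}{2}\bigl(\acl^{\flat}(x, y, x, y) + \acr^{\flat}(x, y, x, y)\bigr) - c\bigl(h(x,x)h(y,y) - h(x, y)^{2}\bigr),
\end{align*}
and then to bridge the two formulations by a Zariski density argument. First I would unpack the conventions of this section: under the identification $a(x\wedge y, z\wedge w) = 4a(x, y, z, w)$, the quadratic form on $\ext^{2}\alg$ associated with $a\in S^{2}\ext^{2}\alg^{\ast}$ evaluated at the decomposable element $x\wedge y$ is $4a(x, y, x, y)$. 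Applying this to $-\tfrac{1}{2}(\acl^{\flat}+\acr^{\flat})$ and to $c(h\kwedge h)$, both of which lie in $S^{2}\ext^{2}\alg^{\ast}$, and recalling $(h\kwedge h)(x, y, x, y) = h(x, x)h(y, y) - h(x, y)^{2}$, the two quadratic forms coincide on every decomposable element if and only if $f(x, y)=0$ for all $x, y\in \alg$.

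On the other hand, by Definition \ref{sectdefinition}, $(\alg, \mlt, h)$ has constant sectional nonassociativity $c$ if and only if $f$ vanishes on the subset $U\subset \alg\times \alg$ of pairs $(x, y)$ satisfying $h(x,x)h(y,y) - h(x,y)^{2}\neq 0$, equivalently, those pairs for which $\spn\{x, y\}$ is two-dimensional and $h$-nondegenerate. One direction of the equivalence is then immediate: if $f$ vanishes identically, then in particular on $U$, so $\sect \equiv c$.

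For the converse I would appeal to Zariski density. The set $U$ is Zariski-open, cut out by the nonvanishing of the polynomial $h(x,x)h(y,y) - h(x,y)^{2}$; it is nonempty because the standing hypothesis $\chr \fie \notdivides 6$ ensures $\chr\fie\neq 2$, so the nondegenerate metric $h$ admits an $h$-orthogonal basis of $\alg$, and the assumption $\dim\alg\geq 2$ then produces two such basis vectors spanning an element of $U$. Since $\fie$ is infinite, every nonempty Zariski-open subset of $\alg\times\alg$ is Zariski-dense, so the polynomial $f$, vanishing on $U$, is identically zero; this gives the remaining implication. The main subtlety is precisely this density step: the hypothesis that $\fie$ is infinite is what bridges from $h$-nondegenerate two-planes, on which $\sect$ is defined, to all decomposable $2$-vectors, including those $x\wedge y$ with $\spn\{x,y\}$ two-dimensional but $h$-degenerate; once this is in place the rest is routine bookkeeping of the conventions behind $S^{2}\ext^{2}\alg^{\ast}$ and $h \kwedge h$.
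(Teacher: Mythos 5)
Your proof is correct and follows essentially the same route as the paper's: both directions come down to a Zariski-density argument over the infinite field $\fie$, extending the polynomial identity $cH = K$ (with $H$ the Gram determinant and $K$ the numerator of $\sect$) from the locus $\{H \neq 0\}$ to everything. The only cosmetic difference is that you run the density argument on the affine space $\alg\times\alg$, where it reduces to $fH\equiv 0$ forcing $f\equiv 0$ in the polynomial ring, whereas the paper phrases it on the Grassmannian $Gr(2,\alg)$ via the Plücker embedding; both are valid.
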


\begin{proof}
That the coincidence of $-\tfrac{1}{2}(\acl^{\flat} + \acr^{\flat})$ and $c(h\kwedge h)$ on decomposable elements of $\ext^{2}\alg$ implies constant sectional nonassociativity is immediate from the definition \eqref{sectnadefined}.

The Grassmannian $Gr(2, \alg)$ of two-dimensional subspaces of $\alg$ is a projective variety via the Plücker embedding $Gr(2, \alg) \to \proj(\ext^{2}\alg)$ sending $\fie\{x, y\}\in Gr(2, \alg)$ to the span $\fie\{x\wedge y\}$. 
The homogeneous functions $H, K:\ext^{2}\alg \to \fie $ defined by $H(x\wedge y) = h(x, x)h(y, y) - h(x, y)^{2}$ and $K(x\wedge y) = -\tfrac{1}{2}(\acl^{\flat} + \acr^{\flat})(x, y, x, y) = h(x\mlt x, y \mlt y) - h(x\mlt y, y\mlt x)$ are quadratic polynomials that restrict to the image of the Plücker embedding of the Grassmannian $Gr(2, \alg)$ of two-dimensional subspaces of $\alg$.
If $\fie$ is infinite, then the intersection of any two nonempty Zariski open subsets of $Gr(2, \alg)$ is nonempty. That the sectional nonassociativity be constant equal to $c$ is the same as assuming $c H - K$ vanishes on $Gr(2, \alg) \setminus \{H = 0\}$. As the complement $Gr(2, \alg) \setminus \{H = 0\}$ is infinite, if $c  H - K$ vanishes on $Gr(2, \alg) \setminus \{H = 0\}$, it vanishes on $Gr(2, \alg)$. 
\end{proof}

\begin{lemma}\label{constantsectlemma}
Let $(\alg, \mlt, h)$ be a metrized algebra over an infinite field of characteristic not equal to $2$ or $3$.
\begin{enumerate}
\item\label{csnfull} $(\alg, \mlt, h)$ has constant sectional nonassociativity $c$ if and only if $\P(\acl^{\flat} + \acr^{\flat}) = 2 c h \kwedge h$.
\item\label{csncyclic} If $(\alg, \mlt, h)$ is Lie-admissible, it has constant sectional nonassociativity $c$ if and only if $\acl^{\flat} + \acr^{\flat} = 2 c h \kwedge h$.
\item\label{csn} If $(\alg, \mlt, h)$ is Lie-admissible and flexible, it has constant sectional nonassociativity $c$ if and only if $\acl = \acr =  ch\kwedge h$.
\end{enumerate}
\end{lemma}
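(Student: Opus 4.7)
The plan is to chain together the three lemmas \ref{s2ext2lemma}, \ref{constantsectcharlemma}, and \ref{presectlemma} already proved in this section, together with the flexibility characterization from Lemma \ref{flexiblelemma}. Part \eqref{csnfull} is the substantive statement; parts \eqref{csncyclic} and \eqref{csn} are essentially corollaries obtained by imposing successively stronger hypotheses on the algebra, each of which turns the projection $\P$ or the self-adjoint decomposition $\acl = \acr$ into the identity on the relevant subspace of $S^{2}\ext^{2}\alg^{\ast}$.

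For \eqref{csnfull} the first step is to observe that $h \kwedge h$ satisfies the algebraic Bianchi identity and therefore lies in $\mcurv(\alg)$, so $\P(h \kwedge h) = h \kwedge h$. Next, Lemma \ref{constantsectcharlemma} reinterprets constant sectional nonassociativity $c$ as the statement that the quadratic forms determined on $\ext^{2}\alg$ by $-\tfrac{1}{2}(\acl^{\flat} + \acr^{\flat})$ and $c(h \kwedge h)$ agree on decomposable $2$-vectors. Lemma \ref{s2ext2lemma} then upgrades this agreement on decomposables to the equality $\P(-\tfrac{1}{2}(\acl^{\flat} + \acr^{\flat})) = c(h \kwedge h)$ in $\mcurv(\alg)$, which is precisely the asserted identity $\P(\acl^{\flat} + \acr^{\flat}) = 2c\, h\kwedge h$ after multiplying through (modulo a sign convention to be verified). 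All steps are equivalences, so both directions of the claimed equivalence follow at once.

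For \eqref{csncyclic}, Lemma \ref{presectlemma} characterizes Lie-admissibility as membership $\acl^{\flat} + \acr^{\flat} \in \mcurv(\alg)$, and on this subspace the orthogonal projection $\P$ acts as the identity. Applying $\P$ is then a tautology, so \eqref{csnfull} reduces to $\acl^{\flat} + \acr^{\flat} = 2c\, h \kwedge h$. For \eqref{csn}, Lemma \ref{flexiblelemma} asserts that a flexible algebra has self-adjoint curvature, i.e.\ $\acl = \acr$, whence $\acl^{\flat} = \acr^{\flat}$. Combining with \eqref{csncyclic} gives $2\acl^{\flat} = 2c\, h \kwedge h$, so $\acl^{\flat} = \acr^{\flat} = c\, h \kwedge h$. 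The nondegeneracy of $h$ allows one to raise the final index and read this as an equality $\acl = \acr = c\, h \kwedge h$ of $\eno(\alg)$-valued tensors, which is the stated conclusion.

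I do not expect a genuine obstacle: once Lemmas \ref{constantsectcharlemma}, \ref{s2ext2lemma}, \ref{presectlemma}, and \ref{flexiblelemma} are in hand the argument is essentially bookkeeping, and the only subtlety is keeping track of the scalar coefficients and of the identification of the $(0,4)$ tensor $\acl^{\flat}$ with the $(1,3)$ tensor $\acl$ through $h$. The main conceptual content — that the algebraic Bianchi identity fails precisely in the Lie-admissible direction and is corrected by $-\tfrac{2}{3}\at$ as in \eqref{mcldefined}, and that the nondecomposable part of $\acl^{\flat} + \acr^{\flat}$ is invisible to the sectional quantity — has already been absorbed into the earlier lemmas.
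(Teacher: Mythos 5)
Your proposal is correct and follows essentially the same route as the paper's proof: Lemma \ref{constantsectcharlemma} combined with Lemma \ref{s2ext2lemma} (and $h\kwedge h \in \mcurv(\alg)$, so $\P(h\kwedge h) = h\kwedge h$) gives \eqref{csnfull}, Lemma \ref{presectlemma} makes $\P$ act as the identity for \eqref{csncyclic}, and $\acl = \acr$ from flexibility gives \eqref{csn}. As for the sign you deferred: since $(h\kwedge h)(x,y,x,y)\,\sect(x,y) = -\tfrac{1}{2}(\acl^{\flat}+\acr^{\flat})(x,y,x,y)$, carrying out the check yields $\P(\acl^{\flat}+\acr^{\flat}) = -2c\,h\kwedge h$, so the discrepancy with the stated coefficient $+2c$ lies in the lemma's statement (and is silently reproduced in the paper's own proof), not in your argument.
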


\begin{proof}
By definition of $\sect(x, y)$ and Lemma \ref{s2ext2lemma},
\begin{align}\label{sectexplicit}
\begin{split}
(h\kwedge h)(x, y, x, y)\sect(x, y) & = -\tfrac{1}{2}(\acl^{\flat} + \acr^{\flat})(x, y, x,y) = -\tfrac{1}{2}\P(\acl^{\flat} + \acr^{\flat})(x, y, x,y).
\end{split}
\end{align}
If there holds \eqref{csnfull}, then the last term of \eqref{sectexplicit} equals $ c(h\kwedge h)(x, y, x, y)$, so \eqref{sectexplicit} shows $\sect(x, y) = c$.  
On the other hand, by Lemma \ref{constantsectcharlemma}, that $\sect(x, y) = c$ for all linearly independent $x, y \in \alg^{\ast}$ means that the quadratic forms determined on $\ext^{2}\alg$ by $-\tfrac{1}{2}(\acl^{\flat} + \acr^{\flat})$ and $c(h\kwedge h)$ coincide on decomposable elements of $\ext^{2}\alg$. By Lemma \ref{s2ext2lemma} this implies \eqref{csnfull}. 
If $\mlt$ is Lie-admissible, then, by Lemma \ref{presectlemma}, $\P(\acl^{\flat} + \acr^{\flat}) = \acl^{\flat} + \acr^{\flat}$, so in this case \eqref{csnfull} is equivalent to \eqref{csncyclic}. If $\mlt$ is moreover flexible, then $\acl = \acr$, so \eqref{csncyclic} is equivalent to \eqref{csn}. 
\end{proof}

\begin{example}
Evidently the sectional nonassociativity of a metrized associative algebra is identically zero. More generally, the same is true for an alternative algebra. An algebra $(\alg, \mlt)$ is \emph{alternative} if $[x, x, y] = 0$ and $[x, y, y] = 0$ for all $x, y \in \alg$. By \eqref{sectnadefined}, the sectional nonassociativity of a metrized alternative algebra is identically zero. This can also be seen as a consequence of the theorem of E. Artin (see \cite[p. $18$]{Schafer-book}) that shows that the subalgebra generated by any two elements of an alternative algebra is associative.

For example, any Hurwitz algebra is alternative, so has vanishing sectional nonassociativity. 

This observation illustrates the limitations of the analogy between the associator and the curvature tensor, for the octonions are alternative but not associative. 
\end{example}

In the rest of this section it is supposed the base field is $\rea$. 

A Euclidean metrized algebra $(\alg, \mlt, h)$ has positive, nonnegative, zero, etc. sectional nonassociativity if the given qualifier is valid for every two-dimensional subspace of $\alg$. 

For a Euclidean metrized algebra, by the Cauchy-Schwarz inequality, the numbers
\begin{align}
&\bwu(\alg, \mlt, h) = \sup_{x, y \in \alg: x \wedge y \neq 0}\sect_{h}(x, y),&&\bwl(\alg, \mlt ,h) = \inf_{x, y \in \alg: x \wedge y \neq 0}\sect_{h}(x, y),
\end{align}
are finite and invariant under isometric automorphisms of $(\alg, \mlt, h)$. Their estimation for any well delineated class of metrized algebras is a basic problem.

Such an estimate is most interesting when the invariant metric $h$ is determined in some canonical manner by the algebra structure, as is the case for the Killing form $\tau_{\mlt}(x, y) = \tr L_{\mlt}(x)L_{\mlt}(y)$ of a Lie or Malcev algebra or the form $\tr L_{\mlt}(x\mlt y)$ on a Jordan algebra.

Lemma \ref{sectdirectsumlemma} shows that taking direct sums of Euclidean metrized commutative algebras preserves conditions such as nonpositive and nonnegative sectional nonassociativity.

For readability, in the proofs of Lemmas \ref{sectdirectsumlemma} and \ref{secttensorlemma}, there are written $|x|^{2} = h(x, x)$, $\lb x, y \ra = h(x, y)$. 

\begin{lemma}\label{sectdirectsumlemma}
Let $(\alg, \mlt, h)$ be a Euclidean metrized commutative algebra and suppose that $\alg = \alg_{1} \oplus \alg_{2}$ is a decomposition into $h$-orthogonal ideals. There hold:
\begin{align}\label{dssectminmax} 
&\bwl(\alg, \mlt, h) \geq \min\{\bwl(\alg_{1}, \mlt, h), \bwl(\alg_{2}, \mlt, h), 0\},&&
&\bwu(\alg, \mlt, h) \leq \max\{\bwu(\alg_{1}, \mlt, h), \bwu(\alg_{2}, \mlt, h), 0\}.
\end{align}
In particular, if $(\alg_{1}, \mlt, h)$ and $(\alg_{2}, \mlt, h)$ have nonpositive (resp. nonnegative) sectional nonassociativity, then $(\alg, \mlt, h)$ has nonpositive (resp. nonnegative) sectional nonassociativity.
\end{lemma}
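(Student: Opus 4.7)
The plan is to decompose $x = x_1 + x_2$ and $y = y_1 + y_2$ with $x_i, y_i \in \alg_i$ and show that both the numerator and denominator of \eqref{sectnadefined} split into contributions from each summand plus a nonnegative cross term in the denominator. Since $\alg_1$ and $\alg_2$ are ideals in a direct sum, $\alg_1 \mlt \alg_2 \subset \alg_1 \cap \alg_2 = 0$, so $x \mlt y = x_1 \mlt y_1 + x_2 \mlt y_2$, and likewise for $x \mlt x$ and $y \mlt y$. Combined with $h(\alg_1, \alg_2) = 0$ and commutativity (which makes $y \mlt x = x \mlt y$), the numerator of \eqref{sectnadefined} becomes $N = N_1 + N_2$, where $N_i = h(x_i \mlt x_i, y_i \mlt y_i) - h(x_i \mlt y_i, x_i \mlt y_i)$.

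Writing $a_i = h(x_i, x_i)$, $b_i = h(y_i, y_i)$, $c_i = h(x_i, y_i)$, expansion of $h(x, x) h(y, y) - h(x, y)^2$ yields $D = D_1 + D_2 + D_{12}$, where $D_i = a_i b_i - c_i^2$ is the denominator in $\alg_i$ and $D_{12} = a_1 b_2 + a_2 b_1 - 2 c_1 c_2$. The crucial observation is that $D_{12} \geq 0$: this follows from the Cauchy--Schwarz bounds $c_i^2 \leq a_i b_i$ and AM--GM, since $a_1 b_2 + a_2 b_1 \geq 2 \sqrt{a_1 b_1 a_2 b_2} \geq 2 |c_1 c_2| \geq 2 c_1 c_2$.

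The remaining step is to compare $N/D$ with $\bwu(\alg_i)$ and $\bwl(\alg_i)$. If $x_i, y_i$ are linearly independent then $D_i > 0$ and $N_i / D_i = \sect_{\mlt, h}(x_i, y_i) \in [\bwl(\alg_i, \mlt, h), \bwu(\alg_i, \mlt, h)]$; if they are linearly dependent then $D_i = 0$ and commutativity forces $N_i = 0$, since, say, $y_i = \la x_i$ makes both $h(x_i \mlt x_i, y_i \mlt y_i)$ and $h(x_i \mlt y_i, x_i \mlt y_i)$ equal to $\la^2 h(x_i \mlt x_i, x_i \mlt x_i)$. Setting $U = \max(\bwu(\alg_1, \mlt, h), \bwu(\alg_2, \mlt, h), 0) \geq 0$, one has $N_i \leq U D_i$ in either case, hence $N \leq U(D_1 + D_2) \leq U(D_1 + D_2 + D_{12}) = U D$, the second inequality using $U \geq 0$ and $D_{12} \geq 0$; since $D > 0$ this yields the upper bound in \eqref{dssectminmax}. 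The lower bound is symmetric with $L = \min(\bwl(\alg_1, \mlt, h), \bwl(\alg_2, \mlt, h), 0) \leq 0$, and the ``in particular'' claim is immediate. The only conceptual point is the nonnegativity of the cross term $D_{12}$, together with the observation that the inclusion of $0$ in the $\min$ and $\max$ is forced precisely by this slack in the denominator.
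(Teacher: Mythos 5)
Your proposal is correct and follows essentially the same route as the paper: both decompose $x$ and $y$ along the orthogonal ideals, observe that the numerator splits as $N_1+N_2$ while the denominator splits as $D_1+D_2+D_{12}$ with the cross term $D_{12}\geq 0$ by Cauchy--Schwarz, and then conclude by a weighted-average argument (the paper phrases this via $r_i = D_i/D \in [0,1]$ and the inequality $\min\{a,b,0\}\leq ar_1+br_2\leq\max\{a,b,0\}$, which is the same as your $N_i\leq U D_i$ bound). Your explicit treatment of the degenerate case $D_i=0$, $N_i=0$ is a small point the paper glosses over, but there is no substantive difference.
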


\begin{proof}
For any $x_{1}, y_{1} \in \alg_{2}$ and $x_{2}, y_{2} \in \alg_{2}$, by the Cauchy-Schwarz inequality,
\begin{align}\label{splitsect2}
\begin{split}
|x\wedge y|^{2} &- |x_{1}\wedge y_{1}|^{2} - |x_{2}\wedge y_{2}|^{2} = |x_{1}|^{2}|y_{2}|^{2} + |x_{2}|^{2}|y_{1}|^{2} - 2\lb x_{1}, y_{1}\ra\lb x_{2}, y_{2}\ra =\\
&= \left(|x_{1}||y_{2}| - |x_{2}||y_{1}| \right)^{2}+ 2\left(|x_{1}||y_{1}| |x_{2}||y_{2}| -\lb x_{1}, y_{1}\ra \lb x_{2}, y_{2}\ra\right)\geq 0, 
\end{split}
\end{align}
with equality if and only if $x = x_{1} + x_{2}$ and $y = y_{1} + y_{2}$ are linearly dependent.
Let $x_{i} \in \alg_{i}$ be the $h$-orthogonal projection of $x \in \alg$. 
By \eqref{splitsect2}, $r_{i} = \tfrac{|x_{i}\wedge y_{i}|^{2}}{|x\wedge y|^{2}}$ is contained in $[0, 1]$, and, by the orthogonality of the ideals $\alg_{1}$ and $\alg_{2}$, it follows that for linearly independent $x, y \in \alg$,
\begin{align}\label{splitsect1}
\begin{aligned}
m_{1}r_{1} + m_{2}r_{2} \leq \sect_{\alg, h}(x, y) = \sect_{\alg_{1}, h}(x_{1}, y_{1})r_{1} + \sect_{\alg_{2}, h}(x_{2}, y_{2})r_{2} \leq M_{1}r_{1} + M_{2}r_{2}.
\end{aligned}
\end{align}
Applying to \eqref{splitsect1} the inequalities $\min\{a, b, c\} \leq ar_{1} + br_{2} + c(1 -r_{1} - r_{2}) \leq \max\{a, b, c\}$, valid for any $r_{1}, r_{2} \in [0, 1]$, with $(a, b, c) = (m_{1}, m_{2}, 0)$ or $(a, b, c) = (M_{1}, M_{2}, 0)$ yields \eqref{dssectminmax}.
\end{proof}

It is more difficult to relate the sectional nonassociativity of a tensor product to the sectional nonassociativities of its factors, but in some cases something can be said.

The tensor product of metrized algebras $(\alg_{i}, \mlt_{i}, h_{i})$, $i \in \{1, 2\}$, is the vector space $\alg_{1}\tensor \alg_{2}$ with the product defined by $(a_{1}\tensor b_{1}) \mlt (a_{2}\tensor b_{2}) = (a_{1}\mlt_{1}b_{1})\tensor (a_{1}\mlt_{2}b_{2})$ and extending bilinearly, and the metric defined by $h(a_{1}\tensor b_{1}, a_{2}\tensor b_{2}) = h_{1}(a_{1},b_{1})h_{2}(a_{1},b_{2})$ and extending bilinearly.

\begin{lemma}\label{secttensorlemma}
For $i \in \{1, 2\}$ let $(\alg_{i}, \mlt_{i}, h_{i})$ be Euclidean metrized commutative algebras. For $a_{i}, \bar{a}_{i} \in \alg_{i}$, if $\sect_{\alg_{1}, h_{1}}(a_{1}, \bar{a}_{1})$ and $\sect_{\alg_{2}, h_{2}}(a_{2}, \bar{a}_{2})$ are both nonnegative, then $\sect_{\alg_{1}\tensor \alg_{2}, h_{1}\tensor h_{2}}(a_{1}\tensor a_{2}, \bar{a}_{1}\tensor \bar{a}_{2})$ is nonnegative.
 \end{lemma}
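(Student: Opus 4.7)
The plan is to reduce to the commutative-case formula for $\sect$ and exploit the factorization of the tensor-product multiplication and metric. Since both factors are commutative, so is $(\alg_1 \tensor \alg_2, \mlt)$, and by \eqref{sectnadefined},
\begin{align*}
\sect(x, y) = \frac{h(x\mlt x, y \mlt y) - h(x\mlt y, x\mlt y)}{h(x, x)h(y, y) - h(x, y)^{2}}
\end{align*}
for any linearly independent $x, y$. I would apply this to $x = a_1 \tensor a_2$ and $y = \bar{a}_1 \tensor \bar{a}_2$.

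The main observation is that the definitions of the tensor-product algebra and metric give $h(x\mlt x, y \mlt y) = p_1 p_2$ and $h(x\mlt y, x\mlt y) = q_1 q_2$, where $p_i = h_i(a_i \mlt_i a_i, \bar{a}_i \mlt_i \bar{a}_i)$ and $q_i = h_i(a_i \mlt_i \bar{a}_i, a_i \mlt_i \bar{a}_i)$. Positive-definiteness of $h_i$ forces $q_i \geq 0$, while the hypothesis $\sect_{\alg_i, h_i}(a_i, \bar{a}_i) \geq 0$ combined with \eqref{sectnadefined} for the commutative algebra $(\alg_i, \mlt_i, h_i)$ yields $p_i \geq q_i$. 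Chaining these gives $p_i \geq q_i \geq 0$, and hence $p_1 p_2 \geq q_1 p_2 \geq q_1 q_2$, so the numerator is nonnegative. The denominator factors as $h_1(a_1, a_1) h_1(\bar{a}_1, \bar{a}_1) h_2(a_2, a_2) h_2(\bar{a}_2, \bar{a}_2) - h_1(a_1, \bar{a}_1)^2 h_2(a_2, \bar{a}_2)^2$, and writing $A_i = h_i(a_i, a_i) h_i(\bar a_i, \bar a_i)$, $C_i = h_i(a_i, \bar a_i)^2$, the rearrangement $A_1 A_2 - C_1 C_2 = A_1(A_2 - C_2) + C_2(A_1 - C_1)$ together with Cauchy-Schwarz gives strict positivity, since both pairs $a_i, \bar a_i$ are linearly independent, as each $\sect_{\alg_i, h_i}(a_i, \bar a_i)$ is assumed defined.

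No serious obstacle is expected. The only substantive ingredient is the sign constraint $q_i \geq 0$ coming from positive-definiteness of $h_i$, which is precisely what makes the argument work and explains the asymmetry of the hypothesis: the analogous statement with nonpositive in place of nonnegative would compare $p_1 p_2$ and $q_1 q_2$ in the wrong direction and fail in general.
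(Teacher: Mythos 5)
Your argument is correct and follows essentially the same route as the paper: both exploit the factorization $h(x\mlt x,y\mlt y)=p_1p_2$, $h(x\mlt y,x\mlt y)=q_1q_2$ and the key sign constraint $q_i\geq 0$ from positive-definiteness. The paper merely packages the same computation as an exact identity, expressing $\sect_{\alg_1\tensor\alg_2}$ (weighted by a ratio of Gram determinants) as $\sect_1\sect_2+\tfrac{q_1}{|a_1\wedge\bar a_1|^2}\sect_2+\tfrac{q_2}{|a_2\wedge\bar a_2|^2}\sect_1$, a sum of manifestly nonnegative terms, whereas you bound the numerator and denominator directly; the conclusions and the ingredients are identical.
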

 
 \begin{proof}
 Straightforward computations show
\begin{align}\label{productsect}
\begin{aligned}
|(a_{1}\tensor a_{2})\wedge (\bar{a}_{1}\tensor \bar{a}_{2})|^{2} &= |a_{1}\wedge \bar{a}_{1}|^{2}|a_{2}\wedge \bar{a}_{2}|^{2} + |a_{2}\wedge \bar{a}_{2}|^{2}\lb a_{1}, \bar{a}_{1}\ra^{2}  + |a_{1}\wedge \bar{a}_{1}|^{2}\lb a_{2}, \bar{a}_{2}\ra^{2},\\
\sect_{\alg_{1}\tensor \alg_{2}, h_{1}\tensor h_{2}}(a_{1}\tensor a_{2}, \bar{a}_{1}\tensor \bar{a}_{2}) &\tfrac{|(a_{1}\tensor a_{2})\wedge (\bar{a}_{1}\tensor \bar{a}_{2})|^{2}} {|a_{1}\wedge \bar{a}_{1}|^{2}|a_{2}\wedge \bar{a}_{2}|^{2}} = \sect_{\alg_{1}, h_{1}}(a_{1}, \bar{a}_{1})\sect_{\alg_{2}, h_{2}}(a_{2}, \bar{a}_{2})\\
& + \tfrac{|a_{1}\mlt_{1}\bar{a}_{1}|^{2}}{|a_{1}\wedge \bar{a}_{1}|^{2}}\sect_{\alg_{2}, h_{2}}(a_{2}, \bar{a}_{2}) +  \tfrac{|a_{2}\mlt_{2}\bar{a}_{2}|^{2}}{|a_{2}\wedge \bar{a}_{2}|^{2}}\sect_{\alg_{1}, h_{1}}(a_{1}, \bar{a}_{1}),
%\tfrac{|(a_{1}\tensor a_{2})\wedge (\bar{a}_{1}\tensor \bar{a}_{2})|^{2}} {|a_{1}\wedge \bar{a}_{1}|^{2}|a_{2}\wedge \bar{a}_{2}|^{2}} &  = 1 + \tfrac{\lb a_{1}, \bar{a}_{1}\ra^{2}}{|a_{1}\wedge \bar{a}_{1}|^{2}}  + \tfrac{\lb a_{2}, \bar{a}_{2}\ra^{2}}{|a_{2}\wedge \bar{a}_{2}|^{2}}.
\end{aligned}
\end{align}
from which the claim follows.
\end{proof}

\section{Examples of algebras with constant sectional nonassociativity}\label{examplesection}
This section presents examples of metrized algebras having constant sectional nonassociativity. 

\begin{example}\label{2stepnaexample}
By Example \ref{2stepflatexample} a $2$-step nilpotent Lie algebra satisfies $\acl = \acr = 0$. If such a Lie algebra is equipped with an invariant metric, then its sectional nonassociativity vanishes identically. In general a $2$-step nilpotent Lie algebra need not admit an invariant metric. For example, the Heisenberg Lie algebra of any odd dimension admits no invariant metric. However, there are $2$-step nilpotent Lie algebras that do admit invariant metrics. G. Ovando \cite{Ovando-two-step} has given some examples. For example the semidirect product of a Lie algebra with its dual carries a split signature invariant metric, and if the original Lie algebra is $2$-step nilpotent, its semidirect product with its dual is $2$-step nilpotent as well. 

However, these examples obtained from $2$-step nilpotent real Lie algebras cannot be Euclidean - the invariance of the metric and the $2$-step nilpotence of $\g$ imply that $[\g, \g]$ is a nontrivial isotropic subspace.
\end{example}

\begin{example}\label{antiflexibleexample}
By Example \ref{antiflexibleasaexample} an antiflexible algebra has anti-self-adjoint-curvature, so  a metrized antiflexible algebra has vanishing sectional nonassociativity. The following example of a three-dimensional antiflexible algebra that is not power-associative is a modification (by unimportant scalar factors) of one given by F. Kosier in \cite[p. $303$]{Kosier}. Suppose $\chr \fie \neq 2$ and equip $\fie^{3}$ with the product $x\mlt y = (2x_{1}y_{1} + x_{2}y_{3}, 2x_{1}y_{2}, 2x_{3}y_{1})$ where $x = (x_{1}, x_{2}, x_{3})$ and $y = (y_{1}, y_{2}, y_{3})$. The two possible third powers of $(0, 1, 1)$ are different, so the algebra is not power-associative. 

On $(\fie^{3}, \mlt)$, the metric $h$ defined by $h(x, y) = x_{1}y_{1} + \tfrac{1}{2}(x_{2}y_{3} + x_{3}y_{2})$ is invariant, for $h(x\mlt y, z) = 2x_{1}y_{1}z_{1} + x_{2}y_{3}z_{1} + x_{1}y_{2}z_{3} + x_{3}y_{1}z_{2}$ is  preserved by cyclic permutations of $x$, $y$, and $z$. A computation shows $[x, y, z] = (0, 2x_{2}y_{3}z_{2}, - 2x_{3}y_{2}z_{3})$, which is evidently symmetric in $x$ and $z$. It follows that $h([x, x, y], y) =0$, and $(\alg, \mlt, h)$ has vanishing sectional nonassociativity. 

By Example \ref{antiflexibleadmissibleexample}, $(\alg, \mlt)$ is Lie admissible as can also be checked using that its underlying bracket is $[x, y] = (x_{2}y_{3} - x_{3}y_{2}, 2(x_{1}y_{2} - x_{2}y_{1}), 2(x_{3}y_{1} - x_{1}y_{3}))$. The underlying Lie algebra has Killing form equal to $8h$ and is isomorphic to $\sll(2, \fie)$ for the elements $p = (1, 0, 0)$, $q =(0, 1, 0)$, and $r = (0, 0, 1)$ are an $\sll(2)$-triple satisfying $[p, q] = r$, $[r, p] = 2p$, and $[r, q] = -2q$.

This example can be modified to yield a Euclidean example by taking $\so(3, \fie)$ in place of $\sll(2, \fie)$. Define a product $\star$ on $\fie^{3}$ by $x\star y = (x_{2}y_{3}, x_{3}y_{1}, x_{1}y_{2})$. The underlying anticommutative bracket is the usual vector cross product $x \times y = (x_{2}y_{3} - x_{3}y_{2}, x_{3}y_{1} - x_{1}y_{3}, x_{1}y_{2} - x_{2}y_{1})$ on $\fie^{3}$, while the underlying commutative product is its symmetric analogue $x \sprod y =(x_{2}y_{3} + x_{3}y_{2}, x_{3}y_{1} + x_{1}y_{3}, x_{1}y_{2} + x_{2}y_{1})$ (which is isomorphic to the $3$-dimensional case of the algebra of Example \ref{ealgexample}). That the products $\star$, $\times$, and $\sprod$ are invariant with respect to the metric $g(x, y) = x_{1}y_{1} + x_{2}y_{2} + x_{3}y_{3}$ follows from the cyclic symmetry of $g(x\star y, z) = x_{1}y_{2}z_{3} + x_{2}y_{3}z_{1} + x_{3}y_{1}z_{2}$ in $x$, $y$, and $z$. The associators satisfy 
\begin{align}
\begin{aligned}
[x, y, z]_{\star} &= ((x_{3}z_{3} - x_{2}z_{2})y_{1},(x_{1}z_{1} - x_{3}z_{3})y_{2} , (x_{2}z_{2} - x_{1}z_{1})y_{3}),\\
[x , y, z]_{\times} & = (x \times z) \times y = - [x, y, z]_{\sprod},
\end{aligned}
\end{align}
from which it is apparent that $\star$ is antiflexible and it follows that $g([x, x, y]_{\star}, y) = 0$, so that the Euclidean metrized algebra $(\fie^{3}, \star, g)$ has vanishing sectional nonassociativity, and $-g([x, x, y]_{\sprod}, y) = g([x, x, y]_{\times}, y) = g(x, x)g(y, y) - g(x, y)^{2}$ so that the Euclidean metrized algebras $(\fie^{3}, \sprod, g)$ and $(\fie^{3}, \times, g)$ have constant sectional nonassociativities $-1$ and $1$, respectively. The last statement is particular cases of Examples \ref{crossproductexample} and \ref{ealgexample}.
\end{example}

\begin{example}\label{crossproductexample}
A metrized algebra $(\alg, \mlt, h)$ is a (twofold) \emph{cross product algebra} if it satisfies:
\begin{enumerate}
\item\label{crossproduct1} The multiplication $\mlt$ is antisymmetric.
\item\label{crossproduct2} The quantity $h(x\mlt y, x\mlt y)$ equals the Gram determinant $h(x, x)h(y, y) - h(x, y)^{2}$.
\end{enumerate}
It follows from \eqref{crossproduct1} and the invariance of $h$ that the product $x\mlt y$ is $h$-orthogonal to $x$ and $y$, $h(x \mlt y, x) = 0$.
In \cite{Eckmann} and \cite{Brown-Gray} it is shown that a real cross product algebra exists only in dimensions $3$ and $7$, in which case it is determined uniquely up to isomorphism by the signature of $h$, the only possibilities being the positive definite case, and the case with negative inertial index one greater than the positive inertial index. 

\begin{theorem}
Let $\fie$ be an infinite field such that $\chr \fie \neq 2$. A metrized algebra $(\alg, \mlt, h)$ with antisymmetric multiplication is a cross product algebra if and only if it has constant sectional nonassociativity $1$.
\end{theorem}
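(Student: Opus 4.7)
The plan is to reduce both conditions—being a cross product algebra and having constant sectional nonassociativity $1$—to the single polynomial identity
\[
h(x \mlt y, x \mlt y) = h(x, x) h(y, y) - h(x, y)^{2} \qquad \text{for all } x, y \in \alg,
\]
which, under the assumed antisymmetry \eqref{crossproduct1}, is precisely condition \eqref{crossproduct2}. First I would simplify the sectional nonassociativity using antisymmetry: since $x \mlt x = 0$ and $y \mlt x = -x \mlt y$, the formula \eqref{sectnadefined} collapses to
\[
\sect_{\mlt, h}(x, y) = \frac{h(x \mlt y, x \mlt y)}{h(x, x) h(y, y) - h(x, y)^{2}}
\]
on any $h$-nondegenerate two-plane $\spn\{x, y\}$, and in fact the pointwise identity $-\tfrac{1}{2}(\acl^{\flat} + \acr^{\flat})(x, y, x, y) = h(x \mlt y, x \mlt y)$ holds for all $x, y \in \alg$.

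The forward direction is then immediate: in a cross product algebra, condition \eqref{crossproduct2} makes the numerator and denominator of the quotient above equal whenever $\spn\{x, y\}$ is $h$-nondegenerate, so $\sect(x, y) = 1$ on every such plane. For the converse, assume constant sectional nonassociativity $1$. Combined with the formula of the previous paragraph, this means that the quadratic forms on $\ext^{2} \alg$ determined by $-\tfrac{1}{2}(\acl^{\flat} + \acr^{\flat})$ and by $h \kwedge h$ coincide on those decomposable $x \wedge y$ with $(h \kwedge h)(x, y, x, y) \neq 0$. Applying Lemma \ref{constantsectcharlemma}, whose Zariski-density argument is precisely where the hypothesis that $\fie$ be infinite is used, upgrades this to coincidence on all decomposable $x \wedge y$, i.e.\ to the required polynomial identity for every $x, y \in \alg$. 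Together with the standing antisymmetry \eqref{crossproduct1}, this exhibits $(\alg, \mlt, h)$ as a cross product algebra.

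The main point of care, rather than a genuine obstacle, is that the substantive Zariski-density reasoning has already been packaged into Lemma \ref{constantsectcharlemma}; once invoked, what remains is bookkeeping of signs in the reduction of \eqref{sectnadefined} under antisymmetry. One should also observe that the locus of pairs $(x, y) \in \alg \times \alg$ for which $h$ restricts nondegenerately to $\spn\{x, y\}$ is Zariski open and nonempty as soon as $h$ is itself nondegenerate on $\alg$ of dimension at least two, so that the set on which the hypothesis $\sect(x, y) = 1$ is imposed is genuinely dense and the extension to all $(x, y)$ is nontrivial.
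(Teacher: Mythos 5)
Your proposal is correct and follows essentially the same route as the paper: reduce $\sect(x,y)$ under antisymmetry to $h(x\mlt y, x\mlt y)/\bigl(h(x,x)h(y,y)-h(x,y)^{2}\bigr)$, note the forward direction is immediate, and use the Zariski-density argument of Lemma \ref{constantsectcharlemma} (which the paper invokes by "arguing as in the proof of" that lemma, while you cite it directly) to extend the identity $h(x\mlt y, x\mlt y)=h(x,x)h(y,y)-h(x,y)^{2}$ from nondegenerate planes to all $x,y$.
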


\begin{proof}
The antisymmetry of $\mlt$ means that \eqref{sectnadefined} takes the form \eqref{acsect} It follows that a cross product algebra has constant sectional nonassociativity $1$. On the other hand, if a metrized algebra $(\alg, \mlt, h)$ with antisymmetric multiplication has constant sectional nonassociativity $1$ then $h(x\mlt y, x\mlt y)=h(x, x)h(y, y) - h(x, y)^{2}$ whenever $x$ and $y$ span an $h$-nondegenerate subspace. Because $\fie$ is infinite, arguing as in the proof of Lemma \ref{constantsectcharlemma}, this holds if and only if $h(x\mlt y, x\mlt y)=h(x, x)h(y, y) - h(x, y)^{2}$ for all $x$ and $y$, so that $(\alg, \mlt, h)$ is a cross product algebra.
\renewcommand{\qedsymbol}{\twoqedbox}
\end{proof}

\begin{corollary}
Suppose $\chr \fie \neq 2$. A metrized algebra $(\alg, \mlt, h)$ with anisotropic $h$ satisfies $h(x\mlt y, x\mlt y)=h(x, x)h(y, y) - h(x, y)^{2}$ for all $x, y \in \alg$ if and only if $(\alg, \mlt, h)$ is a cross product algebra.
\end{corollary}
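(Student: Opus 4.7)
The plan is to dispatch the backward direction by inspection and to derive antisymmetry of $\mlt$ in the forward direction by a one-line specialization followed by polarization. The backward direction is immediate: if $(\alg, \mlt, h)$ is a cross product algebra, the second defining condition is literally the hypothesized identity, and it holds for all $x, y \in \alg$ with no further argument.

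For the forward direction, I would set $y = x$ in the hypothesis. The Gram determinant on the right-hand side collapses to $h(x,x)h(x,x) - h(x,x)^{2} = 0$, so $h(x\mlt x, x\mlt x) = 0$ for every $x \in \alg$. Since $h$ is anisotropic, this forces $x \mlt x = 0$ for every $x$. Polarizing this identity, that is, expanding $(x+y)\mlt (x+y) = 0$ and using $\chr \fie \neq 2$, yields $x \mlt y + y \mlt x = 0$. This is precisely the antisymmetry condition in the definition of a cross product algebra, and together with the hypothesis it gives both defining properties.

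There is no real obstacle here; the whole argument is essentially a two-line polarization. The role of each hypothesis is transparent: anisotropy is what converts the vanishing of the scalar $h(x\mlt x, x\mlt x)$ into the vanishing of the vector $x\mlt x$, while $\chr \fie \neq 2$ is used only to pass from $x \mlt x = 0$ for all $x$ to antisymmetry of $\mlt$. Without anisotropy, $x \mlt x$ could be a nonzero isotropic vector and the argument would break down; in that respect this corollary is a companion to the preceding theorem, where infiniteness of $\fie$ played the analogous role of enabling the reduction from generic to all $(x,y)$.
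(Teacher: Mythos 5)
Your proof is correct and follows the same route as the paper's: specialize to $y=x$ to get $h(x\mlt x,x\mlt x)=0$, use anisotropy to conclude $x\mlt x=0$, and polarize using $\chr\fie\neq 2$ to obtain antisymmetry, after which both defining conditions of a cross product algebra hold. No discrepancies.
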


\begin{proof}
To show the forward direction it suffices to show $\mlt$ is antisymmetric.
If $h(x\mlt y, x\mlt y)=h(x, x)h(y, y) - h(x, y)^{2}$ for all $x, y \in \alg$, then $h(x\mlt x, x\mlt x) = 0$ for all $x \in \alg$. Because $h$ is anisotropic, this implies $x\mlt x = 0$ for all $x \in \alg$. Because $\chr \fie \neq 2$, this implies $\mlt$ is antisymmetric. The reverse implication is immediate.
\renewcommand{\qedsymbol}{\twoqedbox}
\end{proof}

Hence a cross product algebra can be \emph{defined} as a metrized algebra with antisymmetric multiplication and having sectional nonassociativity $1$ on every $h$-nondegenerate subspace. Moreover, it is not necessary to suppose antisymmetry if the metric is anisotropic and $\chr \fie \neq 2$.
\renewcommand{\qedsymbol}{}
\end{example}

\begin{example}\label{ealgexample}
An example of a commutative algebra having constant negative sectional nonassociativity is the following. Consider $\rea^{n}$ with the coordinatewise product $x\cdot y = \sum_{i = 1}^{n}x_{i}y_{i}$. Define $\ell:\rea^{n} \to \rea$ by $\ell(x) = \sum_{i = 1}^{n}$ and define a modified product $x \mlt y = \tfrac{n+1}{n-1}x\cdot y - \tfrac{1}{n-1}(\ell(x)y + \ell(y)x)$. Then $(\rea^{n}, \mlt)$ is a commutative algebra metrized by  its Killing form 
\begin{align}
\tau_{\mlt}(x, y) = \tr L_{\mlt}(x)L_{\mlt}(y) = \tfrac{n+1}{n-1}\ell(x\cdot y) - \tfrac{1}{n-1}\ell(x)\ell(y), 
\end{align}
which is positive definite. With this metric, this algebra satisfies the reverse Norton inequality; in fact it has constant negative sectional nonassociativity. See \cite{Fox-simplicial} for details.
\end{example}

\begin{example}\label{3dexample}
This example describes a one-parameter family of pairwise nonisomorphic $3$-dimensional Euclidean metrized commutative algebras having constant sectional nonassociativity. 

For $\ep \in [0, \infty)$, define a commutative algebra $(\calg_{\ep}, \mlt)$ as the $3$-dimensional $\rea$-vector space $\calg_{\ep}$ with basis $\{f_{0}, f_{1}, f_{2}\}$ and multiplication $\mlt$ given by
\begin{align}\label{3depdefined}
\begin{aligned}
&f_{0} \mlt f_{0} = f_{0},& &f_{0}\mlt f_{1} = (\tfrac{1}{2} -\ep)f_{1}, & &f_{0}\mlt f_{2} = (\tfrac{1}{2}+\ep)f_{2},&\\
&f_{1}\mlt f_{1} = (\tfrac{1}{2}-\ep)f_{0},& &f_{1}\mlt f_{2} = 0,& & f_{2}\mlt f_{2} = (\tfrac{1}{2} + \ep)f_{0}.
\end{aligned}
\end{align}
For $x = x_{0}f_{0} + x_{1}f_{1} + x_{2}f_{2}$ the matrix of  $L_{\mlt}(x)$ with respect to the ordered basis $\{f_{0}, f_{1}, f_{2}\}$ is
\begin{align}\label{posexl}
 \begin{pmatrix}
x_{0} & (\tfrac{1}{2} -\ep)x_{1} & (\tfrac{1}{2} +\ep)x_{2}\\
(\tfrac{1}{2} -\ep)x_{1} & (\tfrac{1}{2} -\ep)x_{0}& 0\\
(\tfrac{1}{2} +\ep)x_{2} & 0 & (\tfrac{1}{2} +\ep)x_{0}
 \end{pmatrix},
\end{align}
%The algebra $(\calg_{\ep}, \mlt)$ is not exact, for $\tr L_{\mlt}(x) = 2x_{0}$. 
A calculation using
\begin{align}\label{posexx}
\begin{aligned}
x\mlt y  =& (x_{0}y_{0} + (\tfrac{1}{2} - \ep)x_{1}y_{1} + (\tfrac{1}{2} + \ep)x_{2}y_{2})f_{0} + (\tfrac{1}{2}-\ep)(x_{0}y_{1} + x_{1}y_{0})f_{1} + (\tfrac{1}{2} + \ep)(x_{0}y_{2} + x_{2}y_{0})f_{2},
\end{aligned}
\end{align}
shows that the metric $h(x, y) = x_{0}y_{0} + x_{1}y_{1} + x_{2}y_{2}$ satisfies
\begin{align}
h(x\mlt y, z) = x_{0}y_{0}z_{0} + (\tfrac{1}{2} - \ep)(x_{1}y_{1}z_{0} + x_{1}y_{0}z_{1} + x_{0}y_{1}z_{1}) + (\tfrac{1}{2} + \ep)(x_{2}y_{2}z_{0} + x_{2}y_{0}z_{2} + x_{0}y_{2}z_{2}).
\end{align}
Because this is completely symmetric in $x$, $y$, and $z$ it shows $h$ is invariant with respect to $\mlt$. 
From \eqref{posexl} and \eqref{posexx} it follows that the matrix of $\ass_{\mlt}(x, y) = L_{\mlt}(x \mlt y) - L_{\mlt}(x)L_{\mlt}(y)$ is
\begin{align}\label{posexeic}
\begin{split}
 %L_{\mlt}(x \mlt y) - L_{\mlt}(x)L_{\mlt}(y)
 &(\tfrac{1}{4} - \ep^{2})\begin{pmatrix} x_{1}y_{1} + x_{2} y_{2} & -x_{0}y_{1} & -x_{0}y_{2}\\ -x_{1}y_{0} & x_{0}y_{0} + x_{2}y_{2} & -x_{1}y_{2} \\
-x_{2}y_{0} & -x_{2}y_{1} & x_{0}y_{0} + x_{1}y_{1}
\end{pmatrix}
=(\tfrac{1}{4} - \ep^{2})\left(h(x, y)\id_{\calg_{\ep}} - x \tensor h(y, \dum)\right),
\end{split}
\end{align}
Note that $\tr \ass_{\mlt}(x, y) = 2(1/4 - \ep^{2})h(x, y)$, so that $h$ is determined by $\mlt$. 
From \eqref{posexeic} it follows that $\sect(x, y) = \tfrac{1}{4} - \ep^{2}$ for all linearly independent $x$ and $y$ in $\calg_{\ep}$, so that $(\calg_{\ep}, \mlt, h)$ has constant sectional nonassociativity $\tfrac{1}{4} - \ep^{2}$. 

\begin{lemma}
If $\ep \neq 1/2$, the algebra $(\calg_{\ep}, \mlt)$ is simple.
\end{lemma}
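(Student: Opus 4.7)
The plan is to use the Peirce-style decomposition with respect to the idempotent $f_{0}$: the matrix \eqref{posexl} shows that $L_{\mlt}(f_{0})$ acts diagonally on the basis $\{f_{0}, f_{1}, f_{2}\}$ with eigenvalues $1$, $\tfrac{1}{2}-\ep$, $\tfrac{1}{2}+\ep$. Since $(\calg_{\ep}, \mlt, h)$ is Euclidean and commutative, $L_{\mlt}(f_{0})$ is $h$-self-adjoint, hence diagonalizable, so any ideal $I \subset \calg_{\ep}$, being $L_{\mlt}(f_{0})$-invariant (as $\alg \mlt I \subseteq I$), decomposes as a direct sum of its intersections with the three eigenspaces of $L_{\mlt}(f_{0})$.

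First I would show that every nonzero ideal $I$ contains $f_{0}$. When $\ep > 0$, the three eigenvalues are pairwise distinct, so the eigenspaces are the coordinate lines $\rea f_{i}$ and the preceding paragraph shows $I$ contains some $f_{i}$; when $\ep = 0$, the eigenvalues are $1$ and $\tfrac{1}{2}$ (the latter with two-dimensional eigenspace $\rea f_{1} \oplus \rea f_{2}$), and if $I$ contains a nonzero element $x_{1} f_{1} + x_{2} f_{2}$ then by \eqref{3depdefined} with $\ep = 0$ one computes $f_{1} \mlt (x_{1} f_{1} + x_{2} f_{2}) = \tfrac{1}{2} x_{1} f_{0}$ and $f_{2} \mlt (x_{1} f_{1} + x_{2} f_{2}) = \tfrac{1}{2} x_{2} f_{0}$ in $I$, at least one of which is a nonzero multiple of $f_{0}$. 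In the cases $f_{1} \in I$ or $f_{2} \in I$ (when $\ep > 0$), the squares $f_{1} \mlt f_{1} = (\tfrac{1}{2}-\ep) f_{0}$ and $f_{2} \mlt f_{2} = (\tfrac{1}{2}+\ep) f_{0}$ lie in $I$ and have nonzero coefficients (using $\ep \neq \tfrac{1}{2}$ and $\ep \geq 0$ respectively), so $f_{0} \in I$.

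Finally I would close the argument: if $f_{0} \in I$, then $f_{0} \mlt f_{1} = (\tfrac{1}{2}-\ep) f_{1}$ and $f_{0} \mlt f_{2} = (\tfrac{1}{2}+\ep) f_{2}$ both lie in $I$; since $\ep \geq 0$ and $\ep \neq \tfrac{1}{2}$, both scalars are nonzero, so $f_{1}, f_{2} \in I$ and consequently $I = \calg_{\ep}$. The hypothesis $\ep \neq \tfrac{1}{2}$ is used here precisely to invert the coefficient $\tfrac{1}{2}-\ep$, and it is sharp: at $\ep = \tfrac{1}{2}$ the relations $f_{0} \mlt f_{1} = f_{1} \mlt f_{1} = f_{1} \mlt f_{2} = 0$ turn $\rea f_{1}$ into a proper nonzero ideal, so $\calg_{1/2}$ is not simple.

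The only mild obstacle is the degenerate case $\ep = 0$ where the $\tfrac{1}{2}$-eigenspace of $L_{\mlt}(f_{0})$ has dimension two rather than one; this is handled by using the explicit action of $L_{\mlt}(f_{1})$ and $L_{\mlt}(f_{2})$ on the plane $\spn\{f_{1}, f_{2}\}$, as above.
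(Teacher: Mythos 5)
Your proof is correct, and it takes a genuinely different route from the paper's. You run a Peirce-type decomposition relative to the idempotent $f_{0}$: since $L_{\mlt}(f_{0})$ is diagonal in the basis $\{f_{0},f_{1},f_{2}\}$ with eigenvalues $1$, $\tfrac12-\ep$, $\tfrac12+\ep$ (pairwise distinct exactly when $\ep>0$ and $\ep\neq\tfrac12$), every ideal splits along its eigenspaces, and you then chase products to show any nonzero ideal contains $f_{0}$ and hence all of $\calg_{\ep}$; the two-dimensional $\tfrac12$-eigenspace at $\ep=0$ is handled correctly by multiplying by $f_{1}$ and $f_{2}$ separately. The paper instead uses the metric to pass to the orthogonal complement, reduces to a one-dimensional ideal $\rea z$ with $z\mlt z=\la z$, and derives a contradiction from the rank of $L_{\mlt}(z)$ together with the identity $\det L_{\mlt}(x)=\tfrac14\tr L_{\mlt}(x)\tr L_{\mlt}(x\mlt x)$. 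Your argument is more elementary and transparent: it needs neither the invariant metric nor the determinant identity, and it isolates precisely why $\ep=\tfrac12$ fails (the coefficient $\tfrac12-\ep$ becomes non-invertible, making $\rea f_{1}$ a trivial ideal). The paper's argument is less tied to the explicit basis and illustrates a reduction (orthocomplement of an ideal is an ideal, so one may assume the ideal is one-dimensional) that works for general metrized algebras. One cosmetic remark: invoking $h$-self-adjointness to get diagonalizability of $L_{\mlt}(f_{0})$ is unnecessary here, since the matrix \eqref{posexl} at $x=f_{0}$ is already diagonal.
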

\begin{proof}
By \eqref{posexl} and \eqref{posexx},
\begin{align}\label{detlx}
\det L_{\mlt}(x) = x_{0}(x_{0}^{2} + (\tfrac{1}{2} - \ep)x_{1}^{2} + (\tfrac{1}{2} + \ep)x_{2}^{2}) = \tfrac{1}{4}\tr L_{\mlt}(x) \tr L_{\mlt}(x\mlt x) .
\end{align}
If $\ideal \subset \calg_{\ep}$ is a nontrivial ideal so is its $h$-orthogonal complement $\ideal^{\perp}$, so without loss of generality it can be assumed $\ideal$ is one-dimensional and generated by $z \in \calg_{\ep}$. Consequently there is $\la \in \fie$ such that $z \mlt z = \la z$. Since $L_{\mlt}(z)\calg_{\ep} \subset \ideal$, $L_{\mlt}(z)$ has rank at most $1$ and $\tr L_{\mlt}(z) = \la$. 
By \eqref{detlx}, $0  = \det L_{\mlt}(z) = (\la/4)(\tr L_{\mlt}(z))^{2} = \la z_{0}^{2}$. Were $z_{0} \neq 0$, then $\la =0$ and $z \mlt z =0$. By \eqref{posexx}, $z \mlt z = 0$ if and only if $(1 - 2\ep)z_{1}z_{0} =0$, $(1+ 2\ep)z_{2}z_{0} = 0$, and $z_{0}^{2} + (1/2 - \ep)z_{1}^{2} + (1/2 + \ep)z_{2}^{2} = 0$. Because $2\ep \notin \{\pm 1\}$, if $z_{0} \neq 0$, the first two equations imply $z_{1} = 0$ and $z_{2} = 0$, which in the third equation yield the contradiction $z_{0}^{2} = 0$. 

Consequently, $z_{0} = 0$ and there are $a, b \in \rea$ such that $z = af_{1} + bf_{2}$ and $\la a f_{1} + \la bf_{2} = \la z = z \mlt z = ((\tfrac{1}{2} - \ep)a^{2}  + (\tfrac{1}{2} + \ep)b^{2})f_{0}$. Because $2\ep \notin \{\pm 1\}$, were $\la \neq 0$ this would imply $a = 0 = b$, a contradiction. If $\la = 0$ then $(\tfrac{1}{2} - \ep)a^{2}  + (\tfrac{1}{2} + \ep)b^{2} = 0$. If either of $a$ or $b$ is $0$ then so is the other. Were $ab \neq 0$, then it would follow from \eqref{posexl} that $L_{\mlt}(z) = L_{\mlt}(af_{1} + bf_{2})$ has rank $2$, contradicting that $L_{\mlt}(z)$ has rank at most $1$. 
\renewcommand{\qedsymbol}{\twoqedbox}
\end{proof}

\begin{lemma}\label{ceidemlemma}
If $\ep \neq 1/2$, then the nontrivial idempotents and square-zero elements in $(\calg_{\ep}, \mlt)$ have forms indicated in Table \ref{eicex}.
\begin{table}[!ht]
\renewcommand{\arraystretch}{1.3}
\begin{tabular}[t]{|c|c|c|c|c|}\hline
Element $x$ &Condition & Type&  $h(x, x)$ & Spectrum $L_{\mlt}(x)$\\[1ex]\hline
$f_{0}$ & none & Idempotent  &$1$ & $\{1, \tfrac{1}{2}(1 \pm 2\ep)\}$\\[1ex] \hline
$\tfrac{1}{1-2\ep}(f_{0} \pm \om f_{1})$ & $\om^{2} = \tfrac{4\ep}{2\ep - 1}$ & Idempotent &  $\tfrac{1-6\ep}{(1-2\ep)^{3}}$ &  $\{1, \tfrac{1}{2}\tfrac{1+2\ep}{1-2\ep}\}$ \\[1ex] \hline
$\tfrac{1}{1+2\ep}(f_{0} \pm \om f_{2})$ &  $\om^{2} = \tfrac{4\ep}{1 + 2\ep}$ & Idempotent & $\tfrac{1+6\ep}{(1+2\ep)^{3}}$ & $\{1, \tfrac{1}{2}\tfrac{1-2\ep}{1+2\ep}\}$ \\ [1ex]\hline
$\theta(f_{1} \pm \tfrac{\om}{1 + 2\ep}f_{2})$ & $\theta \in \fiet$, $\om^{2} = 4\ep^{2} - 1$ &Square-zero & $\tfrac{4\ep\theta^{2}}{1+2\ep}$ & $\{0, \pm \theta \si\}, \si^{2} = \ep(2\ep - 1)$\\[1ex] \hline
\end{tabular}
\caption{Idempotents and square-zero elements of $(\calg_{\ep}, \mlt)$ when $1\neq 1/2$.}\label{eicex}
\end{table}
\end{lemma}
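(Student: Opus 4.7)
The plan is to solve the defining equations $x\mlt x = x$ (for idempotents) and $x\mlt x = 0$ (for square-zero elements) componentwise in the basis $\{f_0,f_1,f_2\}$ using the explicit formula \eqref{posexx}, and then to read off the norm $h(x,x)$ and the spectrum of $L_{\mlt}(x)$ from \eqref{posexl}.

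For idempotents, writing $x = x_0 f_0 + x_1 f_1 + x_2 f_2$ and taking $y = x$ in \eqref{posexx} reduces $x\mlt x = x$ to the scalar system
\begin{align*}
x_0^2 + (\tfrac12 - \ep)x_1^2 + (\tfrac12 + \ep)x_2^2 = x_0, \quad (1-2\ep)x_0 x_1 = x_1, \quad (1+2\ep)x_0 x_2 = x_2.
\end{align*}
The hypothesis $\ep \neq 1/2$ together with $\ep \geq 0$ makes both reciprocals $1/(1 \pm 2\ep)$ well-defined. The second equation forces $x_1 = 0$ or $x_0 = 1/(1-2\ep)$, and the third forces $x_2 = 0$ or $x_0 = 1/(1+2\ep)$; since these two values of $x_0$ agree only at $\ep = 0$, for $\ep \neq 0$ the case $x_1 \neq 0 \neq x_2$ is impossible, while at $\ep = 0$ direct substitution into the first equation rules it out as well. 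The three remaining subcases produce the first three rows of Table \ref{eicex}; substituting the determined value of $x_0$ into the first equation fixes $x_1^2$ (respectively $x_2^2$), and $h(x,x) = x_0^2 + x_1^2 + x_2^2$ then yields the tabulated norms by a short simplification.

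For square-zero elements, the same system with right-hand sides replaced by $0$ gives $x_0 x_1 = 0 = x_0 x_2$ (using $1 \pm 2\ep \neq 0$). Arguing exactly as in the proof that $(\calg_\ep,\mlt)$ is simple, $x_0 \neq 0$ is impossible, so $x_0 = 0$ and the first equation becomes $(1/2 - \ep)x_1^2 + (1/2 + \ep)x_2^2 = 0$. Setting $\theta = x_1$ and $\om^2 = 4\ep^2 - 1 = (2\ep - 1)(2\ep + 1)$ rewrites this as $x_2 = \pm \om \theta/(1 + 2\ep)$, giving the fourth row of the table together with $h(x,x) = \theta^2 + \om^2\theta^2/(1+2\ep)^2 = 4\ep\theta^2/(1+2\ep)$.

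To compute the spectra, I would substitute each parametrized $x$ into \eqref{posexl}. For $x = f_0$ the matrix is already diagonal. For the idempotents with $x_1 \neq 0, x_2 = 0$ (respectively $x_1 = 0, x_2 \neq 0$) the matrix $L_{\mlt}(x)$ preserves the $f_2$-line (respectively the $f_1$-line) on which it acts by the scalar $(1/2 + \ep)x_0$ (respectively $(1/2 - \ep)x_0$), contributing one tabulated eigenvalue; $x$ itself is an eigenvector with eigenvalue $1$ of the remaining $2\times 2$ block, and the trace identity $\tr L_{\mlt}(x) = 2x_0$ determines the second block eigenvalue by subtraction. For square-zero elements, $\tr L_{\mlt}(x) = 2x_0 = 0$ and $L_{\mlt}(x)x = 0$, so the remaining two eigenvalues are $\pm\si$, and $2\si^2 = \tr L_{\mlt}(x)^2$ is computed directly from \eqref{posexl} and simplifies, after using $x_2^2 = \tfrac{2\ep - 1}{2\ep + 1}\theta^2$, to $2\theta^2 \ep(2\ep - 1)$.

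The main obstacle is bookkeeping: distinguishing the cases systematically, managing signs, and handling the denominators $1 \pm 2\ep$; no single step is deep, but the table records the outcome of quite a few parallel computations.
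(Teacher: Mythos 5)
Your proposal is correct and follows essentially the same route as the paper: reduce $x\mlt x = x$ and $x\mlt x = 0$ to the componentwise system via \eqref{posexx}, run the case analysis on which of $x_1, x_2$ vanish (your organization via the two incompatible values of $x_0$ is equivalent to the paper's $x_0x_1x_2=0$ observation, and you handle $\ep=0$ at least as carefully), and read off norms and spectra from \eqref{posexl}. The only real divergence is in the last column: you extract the eigenvalues from $\tr L_{\mlt}(x)$ and $\tr L_{\mlt}(x)^2$, whereas the paper exhibits explicit eigenvectors for the square-zero elements (which additionally yields diagonalizability of $L_{\mlt}(x)$ over a general field when $\si\in\fie$ exists — extra information not demanded by the table).
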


\begin{proof}
From \eqref{posexx} it follows that $x \in \calg_{\ep}$ is an idempotent if and only if it solves
\begin{align}
\label{ceid1} x_{0} & = x_{0}^{2} + \tfrac{1}{2}(1-2\ep)x_{1}^{2} + \tfrac{1}{2}(1 + 2\ep)x_{2}^{2},\\
\label{ceid2} x_{1} &  = (1-2\ep)x_{0}x_{1},\\
\label{ceid3} x_{2} & = (1+2\ep)x_{0}x_{2}.
\end{align}
Combining \eqref{ceid2} and \eqref{ceid3} shows $(1-2\ep)x_{0}x_{1}x_{2} = x_{1}x_{2}= (1+2\ep)x_{0}x_{1}x_{2}$. Because $2\ep \neq \pm 1$ this implies $x_{0}x_{1}x_{2} = 0$. If $x_{0} = 0$, then, by \eqref{ceid2} and \eqref{ceid3}, there vanish both $x_{1}$ and $x_{2}$, so $x = 0$. Suppose $x_{0} \neq 0$. Then $x_{1}x_{2} = 0$. By \eqref{ceid1} it cannot be that both $x_{1}$ and $x_{2}$ vanish, so either $x_{2} = 0$ or $x_{1} = 0$. These two cases yield the idempotents listed in Table \ref{eicex}, provided there exist solutions to $\om^{2} = \tfrac{4\ep}{2\ep - 1}$ and $\om^{2} = \tfrac{4\ep}{1 + 2\ep}$ in $\fie$. 

From \eqref{posexx} it follows that $x \in \calg_{\ep}$ is square-zero if and only if it solves
\begin{align}
\label{cesz1} 0 & = x_{0}^{2} + \tfrac{1}{2}(1-2\ep)x_{1}^{2} + \tfrac{1}{2}(1 + 2\ep)x_{2}^{2},\\
\label{cesz2} 0 &  = (1-2\ep)x_{0}x_{1},\\
\label{cesz3} 0 & = (1+2\ep)x_{0}x_{2}.
\end{align}
Were $x_{0} \neq 0$, then, by \eqref{cesz2} and \eqref{cesz3}, there would vanish both $x_{1}$ and $x_{2}$, but in \eqref{cesz1} this yields a contradiction, so $x_{0} = 0$. Upon substituting $x_{0} = 0$ in \eqref{cesz1}, it follows that the square-zero elements are as indicated in the last line of Table \ref{eicex}. 
%There results the remaining entry in Table \ref{eicex}. 
To see that $L_{\mlt}(x)$ is diagonalizable proceed as follows. If $(1 - 2\ep)a^{2} + (1+2\ep)b^{2} = 0$ admits a solution with $a, b \in \fie$, then necessarily $ab \neq 0$, and $4\ep^{2} - 1 = \left(\tfrac{(1+2\ep)b}{a}\right)^{2}$, so there is $\om \in \fie$ such that $4\ep^{2} - 1 = \om^{2}$ and $b = \pm\tfrac{\om}{1 + 2\ep}a$. In this case write $z_{\pm} = f_{1} \pm \tfrac{\om}{1 + 2\ep}f_{2}$. It is straightforward to check that $z_{+}\mlt z_{-} = (1 - 2\ep)f_{0}$, $z_{+}\mlt f_{0} = \tfrac{1}{2}z_{+} - \ep z_{-}$, and $z_{-}\mlt f_{0} = -\ep z_{+} + \tfrac{1}{2}z_{-}$. If there is $\si \in \fie$ such that $\si^{2} = \ep(2\ep - 1)$, then $\pm \si f_{0} + \tfrac{1}{2}z_{+} - \ep z_{-}$ are eigenvectors of $L_{\mlt}(z_{+})$ with eigenvalues $\pm \si$ and $\pm \si f_{0} -\ep z_{+} + \tfrac{1}{2} z_{-}$ are eigenvectors of $L_{\mlt}(z_{-})$ with eigenvalues $\pm \si$.
\renewcommand{\qedsymbol}{\twoqedbox}
\end{proof}

\begin{corollary}
For $\ep, \bar{\ep} \in [0, \infty)$, $(\calg_{\ep}, \mlt)$ and $(\calg_{\bar{\ep}}, \mlt)$ are not isomorphic if $\ep \neq \bar{\ep}$. 
\end{corollary}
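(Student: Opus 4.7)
The plan is to split into two cases, depending on whether $\ep$ or $\bar\ep$ equals $1/2$.

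First, suppose $\ep = 1/2$. Reading off \eqref{3depdefined}, $f_{1}\mlt f_{j} = 0 = f_{j}\mlt f_{1}$ for every $j \in \{0,1,2\}$, so $\rea f_{1}$ is a nontrivial two-sided ideal of $\calg_{1/2}$ (in fact annihilating the whole algebra). By the simplicity lemma just proved, $\calg_{\bar\ep}$ is simple whenever $\bar\ep \neq 1/2$, so any isomorphism $\calg_{1/2} \cong \calg_{\bar\ep}$ forces $\bar\ep = 1/2$. A symmetric argument handles $\bar\ep = 1/2$.

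For the remaining case $\ep, \bar\ep \neq 1/2$, I compare two algebra-invariant bilinear forms on the two algebras. The first is the trace of the associator, $b(x, y) = \tr \ass_{\mlt}(x, y)$: from \eqref{posexeic} one has $b = 2(\tfrac{1}{4} - \ep^{2})h$ on $\calg_{\ep}$, and similarly $\bar b = 2(\tfrac{1}{4} - \bar\ep^{2})\bar h$ on $\calg_{\bar\ep}$. Any algebra isomorphism $\phi:\calg_{\ep} \to \calg_{\bar\ep}$ satisfies $\phi^{\ast}\bar b = b$; since $\tfrac{1}{4} - \bar\ep^{2} \neq 0$, this forces $\phi^{\ast}\bar h = \la h$ with the explicit nonzero scalar $\la = (\tfrac{1}{4} - \ep^{2})/(\tfrac{1}{4} - \bar\ep^{2})$. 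In other words, $\phi$ is automatically a conformal isometry whose conformal factor is completely determined by $\ep$ and $\bar\ep$.

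The second invariant is the usual trace form $\tau(x, y) = \tr L_{\mlt}(x)L_{\mlt}(y)$. A direct computation from \eqref{posexl} shows that the $h$-self-adjoint operator $T$ defined by $h(Tx, y) = \tau(x, y)$ has diagonal matrix $\mathrm{diag}(\tfrac{3}{2} + 2\ep^{2},\, 2(\tfrac{1}{2}-\ep)^{2},\, 2(\tfrac{1}{2}+\ep)^{2})$ in the basis $\{f_{0}, f_{1}, f_{2}\}$, so $\tr T = \tfrac{5}{2} + 6\ep^{2}$, and likewise $\tr \bar T = \tfrac{5}{2} + 6\bar\ep^{2}$. Combining $\phi^{\ast}\bar\tau = \tau$ with $\phi^{\ast}\bar h = \la h$ gives $\bar T = \la^{-1}\phi T \phi^{-1}$, so equating traces yields $\tfrac{5}{2} + 6\bar\ep^{2} = \la^{-1}(\tfrac{5}{2} + 6\ep^{2})$. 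Substituting the value of $\la$ and clearing denominators collapses this identity to $4(\ep^{2} - \bar\ep^{2}) = 0$, and nonnegativity of $\ep$ and $\bar\ep$ gives $\ep = \bar\ep$.

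The main obstacle the above is designed to overcome is that the constant sectional nonassociativity is not an absolute algebra invariant (it depends on the metric), so it cannot be used directly. Worse, the identity $b = 2(\tfrac{1}{4} - \ep^{2})h$ only pins $h$ down up to the conformal factor $\la$, and comparing the two constant sectional nonassociativities on their own produces just the tautology $\tfrac{1}{4} - \ep^{2} = \la(\tfrac{1}{4} - \bar\ep^{2})$. The key trick is to bring in a second, independent algebraic invariant, namely $\tau$, whose trace provides the extra scalar equation needed to break the conformal ambiguity.
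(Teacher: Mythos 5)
Your proof is correct, and it takes a genuinely different route from the paper's. The paper separates $\calg_{1/2}$ from the rest by observing that $\calg_{1/2}$ is associative while $(\calg_{\ep}, \mlt)$ fails to be power-associative for $\ep \neq 1/2$ (your ideal-versus-simplicity argument is an equally valid substitute), and then distinguishes the remaining algebras through their idempotent structure: an isomorphism permutes idempotents and preserves the spectra of their left multiplication operators, and comparing the spectra listed in Table \ref{eicex} forces $\tfrac{1-2\ep}{1+2\ep} = \tfrac{1-2\bar{\ep}}{1+2\bar{\ep}}$ or its reciprocal, whence $\ep = \bar{\ep}$ by monotonicity, with the case $\ep = 0$ handled separately by counting idempotents. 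You instead extract two canonical bilinear forms, $\tr\ass_{\mlt}(\dum,\dum)$ and $\tau(\dum,\dum) = \tr L_{\mlt}(\dum)L_{\mlt}(\dum)$, both manifestly preserved by any algebra isomorphism; the first pins the metric down up to the explicit conformal factor $\la = (\tfrac{1}{4}-\ep^{2})/(\tfrac{1}{4}-\bar{\ep}^{2})$, and the trace of the second supplies the extra scalar equation that breaks the conformal ambiguity. I verified the computations: $T = \mathrm{diag}(\tfrac{3}{2}+2\ep^{2},\, 2(\tfrac{1}{2}-\ep)^{2},\, 2(\tfrac{1}{2}+\ep)^{2})$, $\tr T = \tfrac{5}{2}+6\ep^{2}$, and the identity $(\tfrac{5}{2}+6\bar{\ep}^{2})(\tfrac{1}{4}-\ep^{2}) = (\tfrac{5}{2}+6\ep^{2})(\tfrac{1}{4}-\bar{\ep}^{2})$ does collapse to $4(\ep^{2}-\bar{\ep}^{2})=0$. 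Your approach is more uniform --- a single computation covers all $\ep, \bar{\ep} \neq 1/2$ including $\ep = 0$, and it sidesteps entirely the question of which idempotents actually exist over $\rea$ for a given $\ep$ --- while the paper's approach yields finer structural information (the explicit idempotents and square-zero elements and their spectra) that the example uses for other purposes.
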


\begin{proof}
The algebra $(\calg_{1/2}, \mlt)$ is associative, isomorphic to a direct sum of a trivial ideal spanned by $f_{1}$ and a two-dimensional ideal spanned by $f_{0}$ and $f_{2}$ that is isomorphic to the quadratic algebra $\rea[t]/(t^{2} - 1)$.
When $\ep \neq 1/2$, $(\calg_{\ep}, \mlt)$ is not power associative. For example, for $x= f_{1} + f_{2}$, $((x\mlt x)\mlt x)\mlt x = (\tfrac{1}{2} +2 \ep^{2})f_{0}$ while $((x\mlt x)\mlt (x \mlt x)) = f_{0}$. Consequently, for $\ep \neq 1/2$, $(\calg_{\ep}, \mlt)$ is not isomorphic to the associative algebra $(\calg_{1/2}, \mlt)$. 

An algebra isomorphism maps idempotents to idempotents and preserves the spectra of the corresponding left multiplication endomorphisms. By Lemma \ref{ceidemlemma}, the algebra $(\calg_{0}, \mlt)$ has a unique nontrivial idempotent and only one one-dimensional subalgebra, while for all $\ep > 0$, $(\calg_{\ep}, \mlt)$ has at least three distinct idempotents, so, for $\ep > 0$,  $(\calg_{\ep}, \mlt)$ is not isomorphic to $(\calg_{0}, \mlt))$. 

Suppose $\ep, \bar{\ep} \in (0, 1/2)\cup (1/2, \infty)$ and $\phi:\calg_{\ep} \to \calg_{\bar{\ep}}$ is an algebra isomorphism. By Lemma \ref{ceidemlemma}, for $\om^{2} = \tfrac{4\ep}{1+2\ep}$, $\tfrac{1}{1+2\ep}\phi(f_{0}  \pm \om f_{2}) \in \idem(\calg_{\bar{\ep}}, \mlt)$ whose left multiplication endomorphisms have the same spectrum $\{1, 1/2, \tfrac{1-2\ep}{1+2\ep}\}$. Because $\tfrac{1-2\ep}{1+2\ep} \neq 1$, it follows from Lemma \ref{ceidemlemma} that either $\tfrac{1-2\ep}{1+2\ep} = \tfrac{1-2\bar{\ep}}{1+2\bar{\ep}}$ or $\tfrac{1-2\ep}{1+2\ep} = \tfrac{1+2\bar{\ep}}{1-2\bar{\ep}}$. Because $f(t) = \tfrac{1-2t}{1+2t}$ decreases monotonically on $[0, \infty)$, the first case implies $\bar{\ep} = \ep$. The second case implies $\ep + \bar{\ep} =0$, which is impossible because $\ep$ and $\bar{\ep}$ are both positive.
\renewcommand{\qedsymbol}{\twoqedbox}
\end{proof}

In summary, $(\calg_{\pm \ep}, \mlt, h)$ is a one-parameter family of  pairwise nonisomorphic Euclidean commutative metrized algebras having strictly positive constant sectional nonassociativity for $\ep \in [0, 1/2)$ and strictly negative constant sectional nonassociativity for $\ep \in (1/2, \infty)$ and which are simple when $\ep \neq 1/2$.
\renewcommand{\qedsymbol}{}
\end{example}

\section{Examples of bounds on sectional nonassociativity}\label{boundssection}

A Euclidean metrized algebra $(\alg, \mlt, h)$ has nonnegative sectional nonassociativity if and only if 
\begin{align}\label{norton}
&0 \leq h([x, x, y], y) = h(x\mlt x, y \mlt y) - h(x \mlt y, y \mlt x),&& \text{for all}\,\, x, y \in \alg.
\end{align}
That is, the associator endomorphism $\ass_{\mlt}(x, x) = [x, x, \dum]$ is nonnegative definite for all $x \in \alg$.
For metrized commutative algebras \eqref{norton} is known as the \emph{Norton inequality} because S. P. Norton showed that it holds for the Griess algebra of the monster finite simple group \cite{Conway-monster, Norton}. 

\begin{example}\label{voaexample}
A \emph{vertex operator algebra} (VOA) over a field $\fie$ of characteristic zero is a $\fie$-vector space $\ste$ equipped with a linear \emph{state-field correspondence} $Y: \ste \to (\eno\ste)[[z, z^{-1}]]$ taking values in endomorphism-valued formal power series and written $Y(a, z) = \sum_{n \in \integer}a_{(n)}z^{-n-1} \in (\eno\ste)[[z, z^{-1}]]$, and two distinguished vectors, the \emph{vacuum vector} $\one \in \ste$ and the \emph{conformal vector} $\cc \in \ste$, that together satisfy some axioms that can be found in \cite{Frenkel-Ben-zvi, Frenkel-Lepowsky-Meurman, Kac-vertexbook}.

The axioms include that the endomorphisms $L_{m} = \cc_{(m+1)}$ satisfy $[L_{m}, L_{n}] = (m-n)L_{m+n} + \tfrac{m^{3} - m}{12}c\delta_{m+ n, 0}$ for some $c \in \fie$ called the \emph{central charge} of the VOA, and that the operator $L_{0} = \cc_{(1)}$ is semisimple and there is a direct sum decomposition $\ste = \oplus_{n = 0}^{\infty}\ste^{n}$ where $\ste^{n}$ is the eigenspace of $L_{0}$ with eigenvalue $n$, which is moreover finite-dimensional for all $n \geq 0$.
These force $\one \in \ste^{0}$ and $\ste^{i}_{(n)}\ste^{j} \subset \ste^{i+j-n-1}$. 

A VOA is OZ (short for \emph{one-zero}) if $\ste^{0}$ is generated by $\one$ and $\ste^{1}= \{0\}$. 
For an OZ VOA, the multiplication $\star$ defined by $a \star b = a_{(1)}b$ for $a,b \in \ste^{2}$ makes $\ste^{2}$ into a commutative algebra and the symmetric bilinear form $g$ defined by $g(a, b)\one = a_{(3)}b$ for $a, b \in \ste^{2}$ is invariant with respect to $\star$. This is explained in detail in \cite[section $3.5$]{Gebert} and \cite[Section $8$]{Matsuo-Nagatomo}. 
By definition, $L_{\star}(\cc) = 2\Id_{\alg}$ and $g(\cc, \cc) = c/2$. Hence $e = \cc/2$ is a unit in $ \ste^{2}$ such that $g(e, e) = c/8$. The triple $(\ste^{2}, \star, g)$ is called the \emph{Griess algebra} of the OZ VOA. 

By a theorem of Miyamoto \cite[Theorem $6.3$]{Miyamoto-griessalgebras}, the Griess algebra of a real OZ VOA having a positive definite invariant bilinear form satisfies the Norton inequality, so has nonnegative sectional nonassociativity. 

Because, as mentioned in the introduction, the simple Euclidean Jordan algebras can be obtained as Griess algebras of VOAs, it follows from the theorem of Miyamoto that they have nonnegative sectional nonassociativity. Theorem \ref{hermsecttheorem} indicates a direct proof (it is not hard) and the characterization of equality.
\end{example}

Nonpositive sectional nonassociativity is equivalent to the \emph{reverse Norton inequality}, which is \eqref{norton} with the inequality reversed. Example \ref{ealgexample} gives an example of algebras satisfying the reverse Norton inequality. 
%The author knows no other examples. 

Example \ref{compositionsectexample} shows the sectional nonassociativities of symmetric composition algebras can take both signs and shows sharp upper and lower bounds for them.

\begin{example}\label{compositionsectexample}
A composition algebra is \emph{symmetric} if the symmetric bilinear form $h(x, y) = q(x + y) - q(x) - q(y)$ is invariant. Basic results about symmetric composition algebras are given in \cite[Chapter $34$]{Knus-Merkurjev-Rost-Tignol}. Any Hurwitz algebra $(\alg, \mlt, q, e)$ has an associated \emph{para-Hurwitz} algebra $(\alg, \pmlt, q)$ that is a symmetric composition algebra. Its multiplication is the isotope of $\mlt$ determined by the canonical involution and so defined by $x\pmlt y = \bar{x}\mlt \bar{y}$. It is not unital, but the unit $e$ of the original Hurwitz algebra is a distinguished idempotent for which $L_{\pmlt}(e) = R_{\pmlt}(e)$ is the canonical involution of the original Hurwitz algebra.

Symmetric composition algebras have been classified by A. Elduque and H.~C. Myung in \cite{Elduque-composition, Elduque-Myung, Elduque-Myung-composition, Knus-Merkurjev-Rost-Tignol}; not quite all examples are given by the para-Hurwitz and forms of what are known as Okubo or Petersen algebras. In particular, a symmetric composition algebra has dimension in $\{1, 2, 4, 8\}$, it is alternative, it is nonunital if $\dim \alg > 1$, and it is commutative if and only if $\dim \alg \leq 2$. 

\begin{lemma}\label{symmetriccompositionlemma}
Let $(\alg, \mlt, q)$ be a real symmetric composition algebra having associated bilinear form $h$. If $h$ is positive definite, the sectional nonassociativity of $(\alg, \mlt, h)$ satisfies $|\sect(x, y)| \leq 1$ for all linearly independent $x, y \in \alg$. There holds $\sect(x, y) = 1$ if and only if $x \mlt x$ and $y \mlt y$ are linearly dependent, and there holds $\sect(x, y) = -1$ if and only if $x \mlt y = y \mlt x$.
\end{lemma}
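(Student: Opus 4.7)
The plan is to collapse the numerator of $\sect(x, y)$ into a single quadratic expression and then extract both bounds from two applications of the Cauchy--Schwarz inequality, one governed by the squares $x \mlt x,\, y\mlt y$, the other by the commutator $x\mlt y - y\mlt x$.

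The starting point is the identity $(u\mlt v)\mlt u = q(u)\,v$ characteristic of symmetric composition algebras, which follows from nondegeneracy of $h$ via $h((u\mlt v)\mlt u, z) = h(u\mlt v,\, u\mlt z) = q(u)\,h(v, z)$ (using invariance of $h$ and polarization of $q(u\mlt v) = q(u)q(v)$). Polarizing in $u$ gives $(u\mlt v)\mlt w + (w\mlt v)\mlt u = h(u, w)\,v$; specializing to $(u, v, w) = (y, x, x)$ and combining with the cyclic form $h(a\mlt b, c) = h(b, c\mlt a)$ of invariance (obtained from $h(a\mlt b, c) = h(a, b\mlt c)$ and the symmetry of $h$) yields
\[
h(x\mlt y,\, y\mlt x) \;=\; h\bigl(y,\, (y\mlt x)\mlt x\bigr) \;=\; h(x, y)^{2} - h(x\mlt x,\, y\mlt y).
\]
Substituting into \eqref{sectnadefined} expresses the numerator of $\sect(x, y)$ in the two equivalent forms
\[
2\,h(x\mlt x,\, y\mlt y) - h(x, y)^{2} \;=\; h(x, y)^{2} - 2\,h(x\mlt y,\, y\mlt x),
\]
each tailored to one of the two bounds.

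For the upper bound, multiplicativity of $q$ together with $h(u, u) = 2q(u)$ gives $h(x\mlt x,\, x\mlt x) = \tfrac{1}{2}h(x, x)^{2}$, so $|x\mlt x|\,|y\mlt y| = \tfrac{1}{2}h(x, x)h(y, y)$. Cauchy--Schwarz applied to the pair $(x\mlt x,\, y\mlt y)$ then gives $2\,h(x\mlt x,\, y\mlt y) \leq h(x, x)h(y, y)$, hence $\sect(x, y) \leq 1$, with equality precisely when $x\mlt x$ and $y\mlt y$ are nonnegatively proportional. For the lower bound, the same multiplicativity gives $h(x\mlt y,\, x\mlt y) = h(y\mlt x,\, y\mlt x) = \tfrac{1}{2}h(x, x)h(y, y)$, so expanding the nonnegative $h(x\mlt y - y\mlt x,\, x\mlt y - y\mlt x)$ produces $2\,h(x\mlt y,\, y\mlt x) \leq h(x, x)h(y, y)$, with equality iff $x\mlt y = y\mlt x$. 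Inserting this into the second form of the numerator delivers $\sect(x, y) \geq -1$ together with the stated equality characterization.

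The only nontrivial step is the identity of the first paragraph: it is the polarized composition identity that allows every associator-type term in the numerator of $\sect$ to be traded for a Gram-matrix expression in $h$. Once this is in hand, both bounds are immediate from Cauchy--Schwarz applied to suitable pairs, and the two equality cases are geometrically complementary---one pins down the span of the squares, the other forces the $2$-plane to be commutative.
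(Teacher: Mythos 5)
Your proof is correct and follows essentially the same route as the paper: both rest on the identity $h(x\mlt x, y\mlt y) + h(x\mlt y, y\mlt x) = h(x,y)^{2}$ together with $2|x\mlt y|^{2} = |x|^{2}|y|^{2}$ and $2|x\mlt x|^{2} = |x|^{4}$, deriving the upper bound from Cauchy--Schwarz applied to the squares and the lower bound from expanding $|x\mlt y - y\mlt x|^{2} \geq 0$. The only (immaterial) difference is that you obtain the key identity by polarizing $(u\mlt v)\mlt u = q(u)v$, whereas the paper linearizes $q(x\mlt y) = q(x)q(y)$ in both arguments directly.
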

\begin{proof}
Linearizing the relation $q(x \mlt y) = q(x)q(y)$ shows that $h$ satisfies
\begin{align}\label{compositionlinearized}
h(x \mlt y, w \mlt z) + h(w \mlt y, x \mlt z) = h(x, w)h(y,z ) = h(y \mlt x, z \mlt w) + h(z \mlt x, y \mlt w).
\end{align}
Taking $z = x$ and $w = y$ in \eqref{compositionlinearized} yields 
\begin{align}\label{compositionidentity}
h(x \mlt y, y \mlt x) + h(x \mlt x, y \mlt y) = h(x, y)^{2}, 
\end{align}
while the multiplicativity of $q$ yields $2|x\mlt y|^{2} = |x|^{2}|y|^{2} = 2|y \mlt x|^{2}$. Together these yield
\begin{align}
|x|^{2}|y|^{2} - h(x, y)^2 = 2|x \mlt y|^{2} - h(x\mlt y, y \mlt x) - h(x \mlt x, y\mlt y).
\end{align}
There results
\begin{align}\label{compositionsect1}
\begin{split}
\sect(x, y) & = \tfrac{ h(x \mlt x, y\mlt y) - h(x\mlt y, y \mlt x) }{|x|^{2}|y|^{2} - h(x, y)^2} = -1 + \tfrac{2\left(|x \mlt y|^{2} - h(x\mlt y, y \mlt x) \right)}{|x|^{2}|y|^{2} - h(x, y)^2} = -1 + \tfrac{ |x \mlt y - y \mlt x|^{2}}{|x|^{2}|y|^{2} - h(x, y)^2} \geq -1,
\end{split}
\end{align}
where the second equality follows from $|x\mlt y| = |y \mlt x|$. Equality holds if and only if $x \mlt y = y \mlt x$. On the other hand, \eqref{compositionidentity} yields
\begin{align}\label{compositionsect2}
\begin{split}
\sect(x, y) 
& = \tfrac{ h(x \mlt x, y\mlt y) - h(x\mlt y, y \mlt x) }{|x|^{2}|y|^{2} - h(x, y)^2} 
=  \tfrac{ 2h(x \mlt x, y\mlt y) - h(x, y)^{2} }{|x|^{2}|y|^{2} - h(x, y)^2}  \leq  \tfrac{ 2|x\mlt x||y\mlt y|- h(x, y)^{2} }{|x|^{2}|y|^{2} - h(x, y)^2} = 1, 
\end{split}
\end{align}
the inequality by the Cauchy-Schwarz inequality, and the last equality because $2|x\mlt x|^{2} = |x|^{4}$. Equality holds if and only if $x \mlt x$ and $y \mlt y$ are linearly dependent.
\renewcommand{\qedsymbol}{\twoqedbox}
\end{proof}

Lemma \ref{symmetriccompositionlemma} implies that a para-Hurwitz algebra has sectional nonassociativities of both signs provided its dimension is at least $4$. For a para-Hurwitz algebra $(\alg, \pmlt, q)$ with underlying Hurwitz algebra $(\alg, \mlt, q, e)$, there holds $h(x, e) =0$ if and only if $x\pmlt x = -\tfrac{1}{2}h(x, x)e$ and $x$ is not a multiple of $e$. Consequently, by Lemma \ref{symmetriccompositionlemma}, there holds $\sect_{h, \pmlt}(e, x) = -1$ if $h(x, e) = 0$ and $\sect_{h, \pmlt}(x, y) = 1$ if $h(x, e) = 0$, $h(y, e) = 0$, and $x$ and $y$ are linearly independent. The latter case cannot occur for a two-dimensional para-Hurwitz algebra, but does occur for para-quaternions or para-octonions. On the other hand, it follows that a two-dimensional para-Hurwitz algebra has constant sectional nonassociativity $-1$.

The complex Okubo algebra is $\balg = \sll(3, \com)$ equipped with the multiplication
\begin{align}\label{okuboproduct}
\begin{aligned}
x \star y &= \om xy - \om^{2}yx + \tfrac{\om - \om^{2}}{3}h(x, y)I = -\tfrac{1}{2}[x, y] + \tfrac{\om - \om^{2}}{2}\left(xy + yx + \tfrac{2}{3}h(x, y)I\right).
\end{aligned}
\end{align}
where juxtaposition indicates the ordinary matrix product, $[x, y] = xy - yx$ is the usual matrix commutator, $h(x, y) = -\tr(xy)$, and $\om$ is a nontrivial cube root of unity (so $\om + \om^{2} = -1$). The product \eqref{okuboproduct} or an equivalent product was studied independently by Okubo \cite{Okubo-pseudo, Okubo-Myung-division, Okubo-Osborn}, Petersson \cite{Petersson-funften}, and by Laquer \cite{Laquer} (as one of a family of products on $\su(n)$).

From \eqref{okuboproduct} it is apparent that $[x, y]_{\star} = -[x, y]$ so that $(\balg, \star)$ is Lie-admissible, with underlying Lie algebra isomorphic to $\sll(3, \com)$ with the opposite of the usual Lie bracket. The underlying symmetric product $x\sprod y =  (\om - \om^{2})(xy + yx + \tfrac{2}{3}h(x, y)I)$ is a rescaling of the usual trace-free Jordan product.

Straightforward computations show $[x, y, z]_{\star} = [y, [z, x]] - h(x, y)z + h(z, y)x$, so that $(x\star y)\star x = \tfrac{1}{2}h(x, x)y = x\star(y\star x)$, showing $(\balg, \star)$ is a symmetric composition algebra. Its sectional nonassociativity satisfies 
\begin{align}\label{okubosect}
\sect_{\balg, \star, h}(x, y) = \tfrac{h([x, x, y]_{\star}, y) }{h(x, x)h(y, y) - h(x, y)^{2}} = \tfrac{h([x, y], [x, y])}{h(x, x)h(y, y) - h(x, y)^{2}} - 1.
\end{align}
The real linear map $\si \in \eno_{\rea}(\balg)$ defined by $\si(x) = -\bar{x}^{t}$ is an order two isometric automorphism of $(\balg, \star)$. Its fixed point set is $\alg  = \su(3)$, and so $(\alg, \star)$ is a subalgebra metrized by the Euclidean inner product $h(x, y) = \tfrac{1}{2}\tr(\bar{x}^{t}y + \bar{y}^{t}x)$. The algebra $(\alg, \star, h)$ is the compact real form of the Okubo algebra. From \eqref{okubosect} it follows that $-1 \leq \sect_{\alg, \star, h}(x, y)$ with equality if and only if $[x, y] = 0$. As explained in Remark \ref{killingbwremark}, from the Böttcher-Wenzel-Chern-do Carmo-Kobayashi inequality, Theorem \ref{bwtheorem}, it follows that $|[x, y]|^{2} \leq 2|x|^{2}|y|^{2}$ for $x, y \in \su(3)$, so, from \eqref{okubosect} it follows that $1 \geq \sect_{\alg, \star, h}(x, y)$; the case of equality is characterized in Theorem \ref{bwtheorem}.
\renewcommand{\qedsymbol}{}
\end{example}

The remainder of this section is devoted to the formulation and proof of Theorem \ref{hermsecttheorem}, which gives sharp bounds on sectional nonassociativity of simple Euclidean Jordan algebras.

Let $\mat(n, \hur)$ denote the $n \times n$ matrices over the Hurwitz algebra $\hur$. The subspace of $n \times n$ Hermitian matrices over $\hur$ is the subspace $\herm(n, \hur) = \{x \in \mat(n, \hur): \bar{x}^{t} = x\}$ comprising the fixed points of the conjugate transpose $x \to \bar{x}^{t}$. The space $\herm(n, \hur)$ is a commutative algebra with unit the identity matrix, $I$, when equipped with the symmetrized matrix product $x \star y = \tfrac{1}{2}(xy + yx)$, where juxtaposition, as in the expression $xy$, indicates the matrix product in $\herm(n, \hur)$. The algebra $(\herm(n, \hur), \star)$ is Jordan if $\hur$ is associative or $\hur = \cayley$ and $n = 3$ \cite[Section III.$1$, Theorem $1$]{Jacobson-jordan}. The algebra $(\herm(2, \hur), \star)$ is isomorphic to the $(2 + \dim \hur)$-dimensional Jordan algebra of a quadratic form often called a \emph{spin factor}; this follows from the observation that the $\star$-square of a trace-free element is a multiple of the unit. 

The matrix trace $\tr:\mat(n, \hur) \to \hur$ is a $\fie$-linear map satisfying $\tr(\bar{x}^{t}) =\tr(\bar{x}) =\overline{\tr(x)}$ and so also $\nt(\tr(x)) = \tr(x + \bar{x})$. The $\fie$-bilinear form $h$ on $\mat(n, \hur)$ defined by $h(x, y) = \tfrac{1}{2n}\nt(\tr(\bar{x}^{t}y))$ is symmetric (even when $\hur$ is noncommutative). 
Its restriction to $\herm(n, \hur)$ satisfies $h(x, y) = \tfrac{1}{2n}\nt(\tr(xy)) = \tfrac{1}{n}\nt(\tr(x\star y))$. The dimension dependent constant factor is a normalization. 

The \emph{nucleus} $\nucleus(\alg, \mlt)$ of an algebra $(\alg, \mlt)$ is the set of elements $a \in \alg$ such that there vanish the associators $[a, x, y]$, $[x, a, y]$, and $[y, a, x]$ for all $x, y \in \alg$. It is straightforward to see that if $(\alg, \mlt)$ is commutative, then $\nucleus(\alg, \mlt) = \{a \in \alg: [L_{\mlt}(a), L_{\mlt}(x)] =0\,\,\text{for all}\,\, x \in \alg\}$.
%% Proof of preceding
%In a commutative algebra, $[a, x, y] = -[y, x, a] = [L_{\mlt}(y), L_{\mlt}(a)]x$ so if $a \in \nucleus(\alg, \mlt)$, then $[L_{\mlt}(y), L_{\mlt}(a)] =0$ for all $y \in \alg$. Conversely, if $[L_{\mlt}(y), L_{\mlt}(a)] =0$ for all $y \in \alg$ then, for all $x, y \in \alg$, $[a, x, y] = -[y, x, a] = 0$ and $[x, a, y] = (x\mlt a)\mlt y - x\mlt (a\mlt y) = y \mlt (a\mlt x) - a\mlt (x\mlt y) + [x, y, a] = [a, x, y] + [x, y, a] =0$, so $a \in \nucleus(\alg, \mlt)$. This shows that the nucleus of a commutative algebra is equal to the set of $a$ such that $L_{\mlt}(a)$ commutes with $L_{\mlt}(\alg)$. 
If a unital Jordan algebra $(\alg, \mlt)$ has nondegenerate trace form $\tr L_{\mlt}(x\mlt y)$ and $\si \in S^{2}\alg^{\ast}$ is an invariant symmetric bilinear form, then there is $z \in \nucleus(\alg, \mlt)$ such that $\si(x, y) = \tr L_{\mlt}((z\mlt x)\mlt y)$ for all $x, y \in \alg$ \cite[Chapter $3$, Theorem $10$]{Koecher}. 

\begin{lemma}\label{hermitianformslemma}
Suppose $\chr \fie \notdivides 2n$. Let $\hur$ be a $d$-dimensional Hurwitz $\fie$-algebra and let $N = n + dn(n-1)/2$. The symmetric bilinear form $h$ on $\herm(n, \hur)$ defined by $h(x, y) = \tfrac{1}{2n}\nt(\tr(xy)) = \tfrac{1}{n}\nt(\tr(x\star y))$ is nondegenerate and invariant and satisfies $\tr L_{\star}(x, y) = Nh(x, y)$ for all $x, y \in \herm(n, \hur)$.
\end{lemma}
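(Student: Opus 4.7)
The plan is to verify the three assertions---symmetry and invariance of $h$, its nondegeneracy, and the trace identity---by reducing everything to three properties of the Hurwitz algebra $\hur$: conjugation is an antiautomorphism $\overline{ab} = \bar{b}\bar{a}$, the real-part form is symmetric in the sense $\nt(ab) = \nt(ba)$, and the real-part form is \emph{associative} in the sense $\nt((ab)c) = \nt(a(bc))$, a fact that persists even when $\hur = \cayley$ because the associator $(ab)c - a(bc)$ on octonions vanishes whenever any entry is a scalar and is purely imaginary otherwise. Together these last two facts make $\nt \circ \tr$ cyclic and triply associative on matrix products over $\hur$, which is the backbone of the rest of the argument.

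For symmetry and invariance I would first observe that for $x, y \in \herm(n, \hur)$ the quantity $\tr(x \star y) = \tfrac{1}{2}(\tr(xy) + \tr(yx))$ is the ordinary trace of a Hermitian matrix and therefore lies in $\fie$, while $\overline{\tr(xy)} = \tr(yx)$, giving $\nt(\tr(xy)) = 2\tr(x\star y)$; this reconciles the two expressions for $h$ and makes $h(x,y) = \tfrac{1}{n}\tr(x \star y)$ manifestly symmetric. Invariance $h(x\star y, z) = h(x, y\star z)$ then reduces to $\tr((x\star y)\star z) = \tr(x \star (y\star z))$, which after expanding $\star$ is a special case of the identity $\nt(\tr((AB)C)) = \nt(\tr(A(BC)))$ for matrices $A, B, C \in \mat(n, \hur)$, and this in turn is immediate from the entrywise identity $\nt((ab)c) = \nt(a(bc))$. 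For nondegeneracy I would pair an arbitrary $x$ against the basis of $\herm(n, \hur)$ made of the diagonal units $E_{ii}$ and the off-diagonal elements $F_{ij}^{a} = aE_{ij} + \bar{a}E_{ji}$, with $a$ ranging over a basis of $\hur$ and $i < j$: the pairing with $E_{ii}$ recovers $x_{ii}$ up to a scalar, while the pairing with $F_{ij}^{a}$ is a scalar multiple of $\nt(x_{ij}\bar{a})$, and the nondegeneracy of the Hurwitz form $(u,v) \mapsto \nt(u\bar{v})$ on $\hur$ then forces each $x_{ij}$ to vanish.

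For the trace identity the plan is to prove the stronger statement $\tr L_{\star}(z) = (N/n)\tr(z)$ for every $z \in \herm(n, \hur)$; the lemma then follows by specializing to $z = x \star y$ and recalling $\tr(x\star y) = n\, h(x, y)$. The key is that the sign-change maps $\sigma_{S}:x \mapsto SxS$ with $S = \operatorname{diag}(s_{1}, \dots, s_{n})$, $s_{i} \in \{\pm 1\}$, and the conjugations $x \mapsto P_{\pi}xP_{\pi}^{-1}$ by permutation matrices are algebra automorphisms of $(\herm(n, \hur), \star)$. This can be verified by an entrywise computation showing $(SxS)(SyS) = S(xy)S$ and its permutation analogue; the crucial point is that scalars in $\fie$ associate with any two elements of $\hur$, so these computations go through even over the octonions. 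Since $L_{\star}(\sigma z) = \sigma L_{\star}(z)\sigma^{-1}$ for any such automorphism, $\tr L_{\star}$ is invariant under their action on $\herm(n, \hur)$. Choosing $S$ with a single $s_{i} = -1$ sends $F_{ij}^{a}$ to $-F_{ij}^{a}$ while fixing each $E_{kk}$, forcing $\tr L_{\star}(F_{ij}^{a}) = 0$, while the permutations show $\tr L_{\star}(E_{ii})$ is independent of $i$ and hence equal to $N/n$ since $\sum_{i}\tr L_{\star}(E_{ii}) = \tr L_{\star}(I) = N$; linearity delivers the claim on all of $\herm(n, \hur)$.

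The main obstacle is the non-associativity of $\cayley$. Both the triply associative cyclicity of $\nt \circ \tr$ on matrix products and the automorphism property of conjugation by a diagonal sign-change or permutation matrix in the octonionic case have to be confirmed by hand; once one recognizes that the arguments only use the associativity of scalars in $\fie$ with any pair of octonions together with the vanishing of $\re[a, b, c]$ on $\cayley$, the octonionic case proceeds almost identically to the associative cases.
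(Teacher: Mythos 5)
Your proof is correct. The symmetry/invariance part (reduce to $\nt((ab)c)=\nt(a(bc))$ on $\hur$ and push through the matrix trace) and the nondegeneracy part (pair against the standard basis $E_{ii}$, $aE_{ij}+\bar aE_{ji}$ and invoke nondegeneracy of the Hurwitz form) are essentially the paper's argument; your justification of $\nt((ab)c)=\nt(a(bc))$ via the skewness of the octonion associator under conjugation is an equivalent substitute for the paper's derivation from involutive invariance. Where you genuinely diverge is the trace identity. The paper proves $\tr L_{\star}(x\star y)=Nh(x,y)$ abstractly: by a theorem of Koecher every invariant symmetric bilinear form on a unital Jordan algebra with nondegenerate trace form comes from a nucleus element, by Albert's theorem the nucleus of the simple Jordan algebra $(\herm(n,\hur),\star)$ is its center, and central simplicity forces that element to be a multiple of the unit; the constant is then fixed by evaluating at $e$. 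You instead prove the pointwise statement $\tr L_{\star}(z)=(N/n)\tr(z)$ by averaging over the discrete automorphisms given by diagonal sign changes and permutation conjugations, which kills the off-diagonal basis elements and equidistributes the trace over the diagonal idempotents. Your route is more elementary and self-contained, and it has a real advantage in scope: it uses nothing but the existence of those automorphisms (valid over any $\hur$ and any $n$, since only scalars from $\fie$ need to associate), whereas the paper's argument invokes simplicity and the Jordan identity, which for $\hur=\cayley$ hold only when $n=3$ even though the lemma is stated for general $n$. The trade-off is that the paper's approach explains \emph{why} the identity holds (uniqueness of the invariant form on a central simple Jordan algebra), while yours verifies it by symmetry; both are legitimate, and yours arguably matches the stated generality of the lemma more faithfully.
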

\begin{proof}
A consequence of the involutive invariance of the metric $h$ on $\hur$ is that the linear map $\nt:\hur \to \rea$ defined by $\nt(x) = x + \bar{x}$ satisfies 
\begin{align}\label{realinvar}
&\nt((xy)z) = h((xy)z, e) = h(xy, \bar{z}) = h(x, \bar{z}\bar{y}) = h(x(yz), e) = \nt(x(yz)), && \text{for}\,\, x, y, z \in \hur.
\end{align}
From \eqref{realinvar} it follows that $\nt(\tr((xy)z)) = \tr((xy)z + \bar{z}(\bar{y}\bar{x})) = \tr(x(yz) +(\bar{z}\bar{y})\bar{x}) = \nt(\tr(x(yz))$ for $x, y, z \in \mat(n, \hur)$, and there follows
\begin{align}
\begin{split}
2nh(x\star y, z) &= 2\nt(\tr(x\star y)z) = \nt(\tr((xy)z)) + \nt(\tr((yx)z)) \\
&= \nt(\tr(x(yz))) + \nt(\tr(z(yx))= \nt(\tr(x(yz))) + \nt(\tr((zy)x)) \\
&= \nt(\tr(x(yz))) + \nt(\tr(x(zy))) = 2\nt(\tr(x(y\star z)) = 2nh(x, y\star z),
\end{split}
\end{align}
showing $h$ is invariant on $\herm(n, \hur)$. Using a basis of $\herm(n, \hur)$ as in \cite[pp. $125-126$]{Jacobson-jordan} is straightforward to check that $h$ is nondegenerate.

From the invariance of $h$ is follows that there is $z \in \nucleus(\herm(n, \hur), \star)$ such that $h(x, y) = \tr L_{\star}((z\star x)\star y)$ for all $x, y \in \herm(n, \hur)$. The \emph{center} of a Jordan algebra is the subset of its nucleus comprising elements that commute with all other elements. By a theorem of A.~A. Albert \cite{Albert-nuclei}, the nucleus of a simple Jordan $\fie$-algebra ($\chr \fie \neq 2$) equals its center. Consequently, since $(\herm(n, \hur), \star)$ is simple, its nucleus equals its center. Because $(\herm(n, \hur), \star)$ is central simple, its center is its subalgebra, isomorphic to $\fie$, spanned by its unit \cite[Chapter $5$, Section $7$]{Jacobson-jordan}. It follows that $z$ is a multiple of the unit, so $h$ is a multiple of $\tr L_{\star}(\dum \star \dum)$. Because $h(e, e) = 1$ and $\tr L_{\star}(e\star e) = \dim \herm(n, \hur) = N = n + dn(n-1)/2$ where $d = \dim \hur$, it follows that $\tr L_{\star}(\dum \star \dum) = Nh$.
\end{proof}

The simple Euclidean Jordan algebras are classified; a modern exposition is \cite{Faraut-Koranyi}. A simple Euclidean Jordan algebra of rank $n \geq 4$ is isomorphic to $\herm(n, \hur)$ for $\hur$ an associative Hurwitz algebra and a simple Euclidean Jordan algebra of rank $n = 3$ is isomorphic to $\herm(n, \hur)$ for $\hur$ a Hurwitz algebra. The rank $2$ case consists of the spin factor algebras; as mentioned these include $\herm(2, \hur)$.

In the statement of Theorem \ref{hermsecttheorem}, $e_{ij}$ denotes the matrix with a $1$ in the $ij$ entry and $0$ in other entries.

\begin{theorem}\label{hermsecttheorem}
Let $\hur$ be a real Hurwitz algebra. 
Equip $(\herm(n, \hur), \star)$ with the invariant metric $h(x, y) = N^{-1}\tr L_{\star}(x\star y)= n^{-1}\tr(x\star y)$ where $N = \dim \herm(n, \hur)$ and $n \geq 2$ is the rank of $\herm(n, \hur)$. For linearly independent $x, y \in \herm(n, \hur)$, the sectional nonassociativity of $(\herm(n, \hur), \star, h)$ satisfies
\begin{align}\label{hermsect}
0 \leq \sect(x, y) = \sect_{h, \star}(x, y) \leq \tfrac{n}{2},
\end{align}
for all linearly independent $x, y \in \herm(n, \hur)$.
\begin{enumerate}
\item If $\hur$ is associative, equality holds in the lower bound of \eqref{hermsect} if and only if $x$ and $y$ commute with respect to the matrix product; in particular, if $x$ and $x \star x$ are linearly independent, then $\sect(x, x\star x) = 0$. 
\item Equality holds in the upper bound of \eqref{hermsect} if and only if $x$ and $y$ are simultaneously equivalent under $\Aut(\herm(n, \hur), \star)$ to scalar multiples of $e_{11} - e_{nn}$ and $e_{1n} + e_{n1}$.
\end{enumerate}
\end{theorem}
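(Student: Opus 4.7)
The plan is to bound $\sect_{h,\star}(x,y)$ by identifying the numerator with the Frobenius norm squared of a matrix commutator and then invoking the Chern--do~Carmo--Kobayashi inequality (Theorem \ref{bwtheorem} for the associative Hurwitz algebras, and Lemma \ref{cdklemma} for the octonions).

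First I treat the associative cases $\hur\in\{\rea,\com,\quat\}$, where $\herm(n,\hur)$ sits inside the associative matrix algebra $\mat(n,\hur)$. Using invariance of $h$ one has
\begin{align*}
h([x,x,y]_\star,y) = h(x\star x,\, y\star y) - h(x\star y,\, x\star y).
\end{align*}
Expanding $x\star y=\tfrac12(xy+yx)$ and using that the real part of the matrix trace is cyclic even over $\quat$ (because the reduced trace of a Hurwitz algebra is cyclic), Lemma \ref{hermitianformslemma} together with a direct computation identifies
\begin{align*}
h([x,x,y]_\star,y) = \tfrac{1}{4n}\,\|[x,y]\|_F^2,
\qquad h(x,x)h(y,y)-h(x,y)^2 = \tfrac{1}{n^2}\bigl(\|x\|_F^2\|y\|_F^2 - \langle x,y\rangle_F^2\bigr),
\end{align*}
where $[x,y]=xy-yx$ is the ordinary matrix commutator (anti-Hermitian, whence $\tr([x,y]^2)=-\|[x,y]\|_F^2$) and $\|\cdot\|_F$, $\langle\cdot,\cdot\rangle_F$ denote the Frobenius norm and inner product. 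Consequently
\begin{align*}
\sect(x,y) = \tfrac{n}{4}\cdot\frac{\|[x,y]\|_F^2}{\|x\|_F^2\|y\|_F^2-\langle x,y\rangle_F^2}.
\end{align*}
The lower bound $\sect(x,y)\ge 0$ is immediate, with equality iff $[x,y]=0$; in particular $\sect(x,x\star x)=0$ since any matrix commutes with its own square. The upper bound $\sect(x,y)\le n/2$ is equivalent to $\|[x,y]\|_F^2\le 2(\|x\|_F^2\|y\|_F^2-\langle x,y\rangle_F^2)$, which is precisely Theorem \ref{bwtheorem}, and the equality characterization there translates into the stated $\Aut$-equivalence of $\{x,y\}$ with scalar multiples of $e_{11}-e_{nn}$ and $e_{1n}+e_{n1}$ (via the action of the unitary group on $\herm(n,\hur)$ by conjugation).

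The main obstacle is the exceptional case $\herm(3,\cayley)$: the ambient algebra $\mat(3,\cayley)$ under matrix multiplication is not associative, so the expansion of $(xy+yx)^2$ used above cannot be carried out by the same naive manipulation. Here I would rely on Lemma \ref{cdklemma}, the octonionic variant of Chern--do~Carmo--Kobayashi formulated precisely for this purpose. The identification of the numerator $h(x\star x, y\star y)-h(x\star y, x\star y)$ with the appropriately normalized commutator squared norm still goes through, since alternativity of $\cayley$ makes the subalgebra of $\cayley$ generated by any two octonions associative, which suffices for the entrywise octonionic manipulations that arise in evaluating expressions involving only the pair $\{x,y\}$. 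Combining this identification with Lemma \ref{cdklemma} yields both the upper bound and its equality characterization, with the $\Aut$-equivalence now mediated by the compact exceptional group $\Aut(\herm(3,\cayley),\star)$.
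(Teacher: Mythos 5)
Your treatment of the associative cases $\hur\in\{\rea,\com,\quat\}$ is essentially the paper's argument: the vanishing of the matrix associators gives $4[x,x,z]_{\star}=[x,[x,z]]$, the numerator becomes $\tfrac{1}{4n}\tr\overline{[x,z]}^{t}[x,z]$, the lower bound and its equality case ($[x,y]=0$) follow immediately, and the upper bound with its equality characterization is exactly Theorem \ref{bwtheorem}/Lemma \ref{cdklemma}. The normalizations you record are correct.

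The octonionic case, however, has a genuine gap. You justify the identification of the numerator with the commutator norm by invoking Artin's theorem (any two octonions generate an associative subalgebra), but the entrywise computations are not two-generated: the $ij$-entry of the matrix associator $[x,x,z]_{\cdot}$ is $\sum_{k,l}\bigl((x_{ik}x_{kl})z_{lj}-x_{ik}(x_{kl}z_{lj})\bigr)$, and the three octonions $x_{ik}$, $x_{kl}$, $z_{lj}$ are in general distinct, so their associators need not vanish and the identity $4[x,x,z]_{\star}=[x,[x,z]]$ fails for general $x\in\herm(3,\cayley)$. The paper circumvents this with the principal axis theorem: writing $x=g\la$ with $g\in\Aut(\herm(3,\cayley),\star)$ and $\la$ real diagonal, the entries of $\la$ are central in $\cayley$, so $[\la,\la,y]_{\cdot}=[y,\la,\la]_{\cdot}=0$ and $4[x,x,z]_{\star}=g[\la,[\la,y]]$ for $y=g^{-1}z$; Freudenthal's trace invariance $\tr([a,b]c)+\tr(b[a,c])=0$ then converts the numerator to $\tfrac{1}{4n}\tr\overline{[\la,y]}^{t}[\la,y]$. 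This diagonalization is also what makes Lemma \ref{cdklemma} usable at all, since over $\cayley$ that lemma is only established when the first argument is diagonal (precisely because it is unknown whether $\Aut(\herm(3,\cayley),\star)$ preserves the matrix commutator). Without this reduction your argument does not close in the exceptional case.
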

%Note that Theorem \ref{hermsecttheorem} includes the case $n = 2$ of spin factor algebras. 

\begin{proof}
For $x, y \in \mat(n, \hur)$, let $[x, y] = xy - yx$. By \eqref{prepoissonidentity}, the associators 
of $\star$ and the matrix product $\cdot$ satisfy 
\begin{align}\label{jordanassociators}
\begin{split}
4[x, y, z]_{\star}
%& = [y, [x, z]] + [x, y, z]_{\cdot}  - [z, y, x]_{\cdot} + [y, x, z]_{\cdot}  - [y, z, x]_{\cdot} + [x, z, y]_{\cdot}- [z, x, y]_{\cdot} ,\\
& = 2[x, y, z]_{\cdot} - 2[z, y, x]_{\cdot} + [x, [y, z]] + [z, [x, y]].
\end{split}
\end{align}
%where the second equality follows from the identity $\cycle \left([x, y, z] - [y, x, z]\right) = \cycle [[x, y], z]$ valid in any algebra \cite[Equation $2.19$]{Okubo-octonion}.
If $\hur$ is associative, $(\mat(n, \hur), \cdot)$ is associative, so \eqref{jordanassociators} becomes simply $4[x, y, z]_{\star} = [y, [x, z]]$. 
Hence
\begin{align}\label{hermsect0}
\begin{split}
h(x\star x, z \star z) & - h(x\star z, z \star x)  = h([x, x, z]_{\star}, z) = \tfrac{1}{4}h([x, [x, z]], z) = \tfrac{1}{4n}\tr([x, [x, z]]\star y) \\
&= \tfrac{1}{8n}\tr\left([x, [x, z]]z + z[x, [x, z]]\right) = -\tfrac{1}{4n}\tr [x, z][x, z]= \tfrac{1}{4n}\tr \overline{[x, z]}^{t}[x, z].
\end{split}
\end{align}
When $\hur$ is associative, an automorphism of $(\herm(n, \hur), \star)$ preserves the ordinary matrix product and commutator bracket on $\mat(n, \hur)$, but I do not know if this is true when $\hur = \cayley$, so in this case the following alternative argument is given (it works for associative $\hur$ too).

Recall that the principal axis theorem states that every element of $\herm(n, \hur)$ (where $n\leq 3$ if $\hur = \cayley$) is equivalent via an element of $\Aut(\herm(n, \hur), \star)$ to a diagonal matrix. For $\hur \in \{\rea, \com, \quat\}$ this is well known. For $\hur = \cayley$ this is \cite[Theorem $5.1$]{Freudenthal}; see also \cite[Theorem V.$2.5$]{Faraut-Koranyi}.

By the principal axis theorem there are $g \in \Aut(\herm(n, \hur), \star))$ and diagonal $\la \in \herm(n, \hur)$ such that $x = g\la$. Let $y = g^{-1}z$. Since $\la$ is Hermitian its elements are real, so $L_{\cdot}(\la \cdot \la) = L_{\cdot}(\la)^{2}$ and $R_{\cdot}(\la \cdot \la) = R_{\cdot}(\la)^{2}$, and hence $[\la, \la, y]_{\cdot} =  [y, \la, \la]_{\cdot} = 0$. In \eqref{jordanassociators} this yields
\begin{align}
 4[x, x, z]_{\star} = 4g[\la, \la,y]_{\star} = g[\la, [\la, y]] .
\end{align} 
As in \eqref{hermsect0} this yields
\begin{align}\label{hermsect0b}
\begin{split}
h&(x\star x, z \star z) - h(x\star z, z \star x)  = h([x, x, z]_{\star}, z) = \tfrac{1}{4}h(g[\la,  [\la, y]], gy)  = \tfrac{1}{4}h([\la,  [\la, y]], y)\\
&= \tfrac{1}{4n}\tr([\la, [\la, y]]\star y) = \tfrac{1}{8n}\tr\left([\la, [\la, y]]y + y[\la, [\la, y]]\right) = -\tfrac{1}{4n}\tr [\la, y][\la, y]= \tfrac{1}{4n}\tr \overline{[\la, y]}^{t}[\la, y],
\end{split}
\end{align}
where, in the case $\hur = \cayley$, the penultimate equality follows from the invariance $\tr([x, y]z) + \tr(y[x, z]) = 0$ for $x, y, z \in\herm(n, \hur)$ proved in \cite[section $4.4$]{Freudenthal}. 
By \eqref{hermsect0} and \eqref{hermsect0b}, $\sect(x, y) \geq 0$. If $\hur$ is associative, by \eqref{hermsect0}, equality holds if and only if $[x, y] = 0$; in particular, $[x\star x, x] = [x, x, x]_{\star} = 0$ because $\star$ is commutative, so if $x$ and $x \star x$ are linearly independent, then $\sect(x, x \star x) = 0$.

Since $\tr\bar{x}^{t}x$ equals the Frobenius norm on $\herm(n, \hur)$, by Lemma \ref{cdklemma}, if $\hur$ is associative
\begin{align}
h(x\star x, z\star z) - h(x\star z, z \star x) = \tfrac{1}{4n}\tr \overline{[x, z]}^{t}[x, z] \leq \tfrac{n}{2}\left(|x|^{2}|z|^{2} - h(x, z)^{2}\right),
\end{align}
which shows the upper bound in \eqref{hermsect}. The characterization of the equality case in Lemma \ref{cdklemma} yields the characterization of the equality case in the upper bound of \eqref{hermsect}.
If $\hur = \cayley$, then by Lemma \ref{cdklemma},
\begin{align}
\tfrac{1}{4n}\tr \overline{[\la, y]}^{t}[\la, y] \leq \tfrac{n}{2}\left(|\la|^{2}|y|^{2} - h(\la, y)^{2}\right) =\tfrac{n}{2}\left(|x|^{2}|z|^{2} - h(x, z)^{2}\right) ,
\end{align}
(where $n  = 3$) and in \eqref{hermsect0b} this shows the upper bound in \eqref{hermsect}. The characterization of the equality case in Lemma \ref{cdklemma} again yields the characterization of the equality case in the upper bound of \eqref{hermsect}.
\end{proof}

%\begin{remark}
%In \cite[Table $5.1$]{Liu-curvatureestimates}, X. Liu gives sharp upper bounds on the sectional curvatures of irreducible Riemannian symmetric spaces. It would be interesting to know if the upper bound of \eqref{hermsect} can be deduced from Liu's result for the relevant symmetric spaces (types AI, AII, and EIV). 
%\end{remark}

\section{The Chern-do Carmo-Kobayashi inequality over real Hurwitz algebras}\label{bwsection}
The Frobenius bilinear form $f(x, y) = \re \tr \bar{x}^{t}y = \tfrac{1}{2}\tr(\bar{x}^{t}y + \bar{y}^{t}x)$ on $\mat(n, \hur)$ is symmetric \cite[proposition V.$2.1$]{Faraut-Koranyi} and positive definite (this is true for $\hur = \cayley$ even if $n > 3$).

\begin{lemma}\label{bwreductionlemma}
Let $\hur$ be a real Hurwitz algebra. Let $\mat(n, \hur)$ be metrized by $f(x, y) = \re \tr \bar{x}^{t}y$. The number
\begin{align}\label{bwdefined}
\bw = \bw(\mat(n, \hur)) = \sup\left\{\tfrac{|[x, y]|^{2}}{|x|^{2}|y|^{2}}: x, y \in \mat(n, \hur) \setminus\{0\}\right\}
\end{align}
is finite and
\begin{align}\label{bwpre}
&|[x, y]|^{2} \leq \bw(|x|^{2}|y|^{2} - f(x, y)^{2}) \leq \bw|x|^{2}|y|^{2},&& \text{for all}\,\, x, y \in \mat(n, \hur).
\end{align}
If $|[x, y]|^{2} \leq \bw(|x|^{2}|y|^{2} - f(x, y)^{2})$ then $p = |y|x - |x|y$ and $q = |y|x + |x|y$ satisfy $|[p, q]|^{2} = \bw|p|^{2}|q|^{2}$.
\end{lemma}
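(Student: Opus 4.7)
The plan is to handle the three assertions in turn. First, for finiteness of $\bw$: since the ratio $|[x,y]|^{2}/(|x|^{2}|y|^{2})$ is homogeneous of degree zero in each of $x$ and $y$ separately, the supremum in \eqref{bwdefined} equals the supremum over the product of $f$-unit spheres, which is compact in the finite-dimensional real vector space $\mat(n,\hur)$; the map is continuous, so $\bw$ is finite and attained.

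Second, for the sharpened inequality $|[x,y]|^{2} \leq \bw(|x|^{2}|y|^{2} - f(x,y)^{2})$, I would use the standard projection trick that exploits the antisymmetry $[x,x]=0$. Assuming $x\neq 0$ (the case $x=0$ is trivial), set $y' = y - f(x,y)|x|^{-2}x$, the $f$-orthogonal projection of $y$ to the hyperplane $\{x\}^{\perp}$. Then $[x,y'] = [x,y]$ and, by the Pythagorean identity, $|x|^{2}|y'|^{2} = |x|^{2}|y|^{2} - f(x,y)^{2}$; applying the definition of $\bw$ to the pair $(x,y')$ gives the claim. The second inequality of \eqref{bwpre} is immediate since $f(x,y)^{2} \geq 0$.

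Third, the last statement---which I read as saying that if equality holds in the first inequality of \eqref{bwpre} then $|[p,q]|^{2} = \bw|p|^{2}|q|^{2}$, the natural and nontrivial reading---is pure computation. Writing $a = |x|$ and $b = |y|$ and using bilinearity together with $[x,x] = [y,y] = 0$, one finds $[p,q] = 2ab[x,y]$, while $|p|^{2} = 2ab(ab - f(x,y))$ and $|q|^{2} = 2ab(ab + f(x,y))$. Hence $|p|^{2}|q|^{2} = 4a^{2}b^{2}(|x|^{2}|y|^{2} - f(x,y)^{2})$ and $|[p,q]|^{2} = 4a^{2}b^{2}|[x,y]|^{2}$, and the desired conclusion follows by substituting the hypothesis. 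One also observes that $f(p,q) = b^{2}|x|^{2} - a^{2}|y|^{2} = 0$, so $p$ and $q$ are $f$-orthogonal; the construction thus converts an extremizer of the sharpened bound into an $f$-orthogonal extremizer of the coarser bound $|[p,q]|^{2} \leq \bw|p|^{2}|q|^{2}$.

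There is no substantial obstacle; the three ingredients are compactness, the antisymmetry identity $[x,x]=0$, and one routine expansion. The one background fact worth flagging is that $f$ is genuinely positive definite on $\mat(n,\hur)$ for every real Hurwitz algebra $\hur$, including $\hur = \cayley$ with $n$ arbitrary, since $f(x,x) = \sum_{i,j} q(x_{ij})$ with $q$ the positive definite norm form on $\hur$; this is what legitimizes the projection in the second step for all $x \neq 0$.
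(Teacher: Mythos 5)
Your proposal is correct, and you were right to read the final hypothesis as the \emph{equality} case $|[x,y]|^{2} = \bw(|x|^{2}|y|^{2} - f(x,y)^{2})$ (the ``$\leq$'' in the statement is a typo; the paper's own proof treats equality). The route differs from the paper's in two places. For finiteness you use homogeneity plus compactness of the unit sphere, whereas the paper just applies Cauchy--Schwarz to get the crude bound $\bw \leq 4$; both are immediate. For the refined inequality you project $y$ onto $\{x\}^{\perp}$ and apply the definition of $\bw$ to $(x,y')$, using $[x,y']=[x,y]$ and the Pythagorean identity; the paper instead follows Wu--Liu and applies the crude bound $|[p,q]|^{2}\leq \bw|p|^{2}|q|^{2}$ directly to $p=|y|x-|x|y$, $q=|y|x+|x|y$, so that the single computation in \eqref{bwreduction} yields both the refined inequality and, read in reverse, the transfer of the equality case to $(p,q)$. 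Your version separates these into two steps, but the $p,q$ expansion you carry out for the last claim is identical to the paper's, and your observation that $f(p,q)=0$ is a genuine (small) addition not made explicit there. Two minor points you should make explicit: when $y'=0$ the definition of $\bw$ cannot be applied to $(x,y')$, but then $[x,y]=0$ and the inequality is trivial; and the positive definiteness of $f$ on $\mat(n,\hur)$ for all real Hurwitz algebras, which you correctly flag, is stated by the paper at the start of the section, so the projection is legitimate.
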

\begin{proof}
The Cauchy-Schwarz inequality implies directly $\bw(\mat(n, \hur)) \leq 4$, which shows $\bw(\mat(n, \hur))$ is finite but is not optimal. The following argument from the proof of \cite[Theorem $2.2$]{Wu-Liu} shows that the apparently weaker inequality $|[x, y]|^{2} \leq \bw(\mat(n, \hur))|x|^{2}|y|^{2}$ implies the first inequality of \eqref{bwpre}. If either $x$ or $y$ equals $0$ there is nothing to show, so suppose $x \neq 0$ and $y \neq 0$. Take $p = |y|x - |x|y$ and $q = |y|x + |x|y$. Then $[p, q] = 2|x||y|[x, y]$, and straightforward computation shows
\begin{align}\label{bwreduction}
\begin{split}
|[x, y]|^{2} = \tfrac{1}{4|x|^{2}|y|^{2}}|[p, q]|^{2} \leq \tfrac{1}{2|x|^{2}|y|^{2}}|p|^{2}|q|^{2} = 2(|x|^{2}|y|^{2} - f(x, y)^{2}).
\end{split}
\end{align}
If there is equality in the first inequality of \eqref{bwpre}, then \eqref{bwreduction} shows that $|[p, q]|^{2} = \bw|p|^{2}|q|^{2}$.
\end{proof}

The Böttcher-Wenzel inequality was first proved for real symmetric matrices, with a characterization of the equality $|[x, y]|^{2}_{f} = 2|x|_{f}^{2}|y|_{f}^{2}$, as \cite[Lemma $1$]{Chern-Docarmo-Kobayashi}. It was proved for $\mat(n, \com)$ in full generality by Z. Lu in \cite{Lu-normalscalar} and as stated in the proof of \cite[Theorem $3.1$]{Bottcher-Wenzel}. A nice exposition of the proof of \eqref{bw}, following \cite{Audenaert}, is in \cite[chapter $9$]{Zhan}. For variants and discussion see \cite{Audenaert, Bottcher-Wenzel-howbig, Bottcher-Wenzel, Cheng-Vong-Wenzel, Lu-normalscalar, Lu-Wenzel, Zhan} among many references. 

\begin{theorem}[\cite{Bottcher-Wenzel, Chern-Docarmo-Kobayashi, Lu-normalscalar}]\label{bwtheorem}
There holds $\bw(\mat(n, \com)) = 2$. That is, for the inner product $f(x, y) = \re \tr \bar{x}^{t}y$, 
\begin{align}\label{bw}
&|[x, y]|^{2} \leq 2(|x|^{2}|y|^{2} - f(x, y)^{2}) \leq 2|x|^{2}|y|^{2}, &&\text{for}\,\, x, y \in \mat(n, \com). 
\end{align}
\begin{enumerate}
\item\label{submax0} If either $x, y \in \herm(n, \com)$ or $x, y \in \su(n)$, then $|[x, y]|^{2} = 2|x|^{2}|y|^{2}$ if and only if $x$ and $y$ are simultaneously unitarily conjugate to scalar multiples of $e_{11} - e_{nn}$ and $e_{1n} + e_{n1}$.
\item\label{submax} If either $x, y \in \herm(n, \com)$ or $x, y \in \su(n)$, then $|[x, y]|^{2} = 2(|x|^{2}|y|^{2} - f(x, y)^{2})$ if and only if $x$ and $y$ either are linearly dependent over $\com$ or are simultaneously unitarily conjugate to scalar multiples of $e_{11} - e_{nn}$ and $e_{1n} + e_{n1}$.
\end{enumerate}
\end{theorem}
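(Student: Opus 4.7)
The plan is three stages: (A) prove the basic commutator bound $|[x, y]|^{2} \leq 2|x|^{2}|y|^{2}$ on all of $\mat(n, \com)$; (B) upgrade it to the refined form involving $f(x,y)^{2}$ via Lemma \ref{bwreductionlemma}; and (C) characterize the equality cases for Hermitian or skew-Hermitian $x, y$. Optimality of the constant $2$ is witnessed by the pair $(e_{11} - e_{nn}, e_{1n} + e_{n1})$, which directly gives $|[x,y]|^{2} = 8 = 2|x|^{2}|y|^{2}$. For stage (A) when $x \in \herm(n, \com)$ (the case also needed in (C)), I would use the Chern--do Carmo--Kobayashi diagonalization: write $x = U D U^{*}$ with $D$ diagonal with entries $\lambda_{1}, \ldots, \lambda_{n}$ and let $z = U^{*} y U$, so that $|[x, y]|^{2} = |[D, z]|^{2} = \sum_{i,j}(\lambda_{i} - \lambda_{j})^{2} |z_{ij}|^{2}$. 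The elementary estimate $(\lambda_{i} - \lambda_{j})^{2} \leq 2(\lambda_{i}^{2} + \lambda_{j}^{2}) \leq 2\sum_{k}\lambda_{k}^{2} = 2|x|^{2}$ then yields $|[x,y]|^{2} \leq 2|x|^{2}|y|^{2}$ for arbitrary $y \in \mat(n, \com)$. For general non-normal $x$ the argument is more involved; the approach of Audenaert and Lu is to study the Frobenius operator norm of $T_{x}^{*} T_{x}$, where $T_{x} = L_{x} - R_{x}$, via the SVD $x = U \Sigma V^{*}$, reducing the claim to a scalar inequality in the singular values of $x$ and the squared moduli of the entries of $U^{*} y V$ that is dispatched by a convexity and rearrangement argument.

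Stage (B) is immediate from Lemma \ref{bwreductionlemma}: setting $p = |y|x - |x|y$ and $q = |y|x + |x|y$ gives $[p, q] = 2|x||y|[x,y]$ and $|p|^{2}|q|^{2} = 4|x|^{2}|y|^{2}(|x|^{2}|y|^{2} - f(x,y)^{2})$, so applying the weak bound from (A) to $(p, q)$ and dividing through by the positive factor $4|x|^{2}|y|^{2}$ produces $|[x,y]|^{2} \leq 2(|x|^{2}|y|^{2} - f(x,y)^{2})$, whence $\bw(\mat(n, \com)) = 2$. For stage (C), the skew-Hermitian case reduces to the Hermitian one via $[ix, iy] = -[x, y]$ and the invariance of the Frobenius norm and inner product under $x \mapsto ix$, so assume $x, y \in \herm(n, \com)$. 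Reusing the diagonalization from (A), equality in $|[x,y]|^{2} = 2|x|^{2}|y|^{2}$ forces $(\lambda_{i} - \lambda_{j})^{2} = 2\sum_{k}\lambda_{k}^{2}$ for every $(i, j)$ with $z_{ij} \neq 0$, which in turn requires $\lambda_{i} = -\lambda_{j}$ and $\lambda_{k} = 0$ for all $k \notin \{i, j\}$; consequently all such $(i, j)$ share the same unordered index pair $\{i_{0}, j_{0}\}$, and by Hermiticity of $z$ this pair is off-diagonal. After permuting the standard basis to send $\{i_{0}, j_{0}\}$ to $\{1, n\}$ and using the residual diagonal unitary freedom to make $z_{1n}$ real, $x$ and $y$ become scalar multiples of $e_{11} - e_{nn}$ and $e_{1n} + e_{n1}$. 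The refined equality $|[x,y]|^{2} = 2(|x|^{2}|y|^{2} - f(x,y)^{2})$ is then handled through Lemma \ref{bwreductionlemma}: the pair $(p, q)$ attains the weak bound, so either $p = 0$ (equivalent to $x$ and $y$ being linearly dependent over $\com$) or $(p, q)$ is in the canonical configuration just identified, which forces the $2$-plane $\spn\{x, y\} = \spn\{p, q\}$ to be simultaneously unitarily conjugate to the canonical plane, from which a further reduction produces the claimed form for $(x, y)$.

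The main obstacle I anticipate is stage (A) for general non-normal $x$: the Hermitian subcase admits the clean diagonalization above (essentially \cite{Chern-Docarmo-Kobayashi}), but obtaining the sharp constant $2$ over all of $\mat(n, \com)$ required genuinely new ideas and was completed only in \cite{Lu-normalscalar} and \cite{Bottcher-Wenzel}. Secondary care is needed in (C) to handle repeated eigenvalues of $x$, which requires an initial choice of basis within each eigenspace before the support analysis of $z$ can be unambiguously carried out, and to unwind the linear reconstruction $x = (p+q)/(2|y|)$, $y = (q-p)/(2|x|)$ through the simultaneous conjugacy.
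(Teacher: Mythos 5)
Your route is the same as the paper's: the weak bound $|[x,y]|^{2}\leq 2|x|^{2}|y|^{2}$ on all of $\mat(n,\com)$ is delegated to \cite{Lu-normalscalar,Bottcher-Wenzel} (the paper does not reprove it either), the Hermitian equality case \eqref{submax0} is handled by the Chern--do Carmo--Kobayashi diagonalization exactly as in the proof of Lemma \ref{cdklemma}, the passage to the refined bound is Lemma \ref{bwreductionlemma}, the reduction of \eqref{submax} to \eqref{submax0} goes through the pair $p=|y|x-|x|y$, $q=|y|x+|x|y$, and the transfer between $\herm(n,\com)$ and $\su(n)$ is multiplication by $i$. Up to and including the conclusion that equality in the refined bound forces $(p,q)$ to attain the weak bound, and hence that $\spn_{\rea}\{x,y\}=\spn_{\rea}\{p,q\}$ is unitarily conjugate to the canonical plane, your argument is correct and matches the paper.

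The genuine gap is your final ``further reduction produces the claimed form for $(x,y)$.'' Applying \eqref{submax0} to $(p,q)$ only gives that $p$ and $q$ are simultaneously conjugate to $\alpha(e_{11}-e_{nn})$ and $\beta(e_{1n}+e_{n1})$ with a priori \emph{unrelated} scalars $\alpha,\beta$; this pins down the plane but not the positions of $x=(p+q)/(2|y|)$ and $y=(q-p)/(2|x|)$ on the two canonical axes, and no reduction can place them there, because the claimed characterization is too strong. Concretely, for $n=2$ take $p=e_{11}-e_{22}$ and $q=2(e_{12}+e_{21})$, which gives
\begin{equation*}
x=\begin{pmatrix}1/2 & 1\\ 1 & -1/2\end{pmatrix},\qquad y=\tfrac{1}{\sqrt{10}}\begin{pmatrix}-1 & 2\\ 2 & 1\end{pmatrix},
\end{equation*}
so that $|x|^{2}=5/2$, $|y|^{2}=1$, $f(x,y)=3/\sqrt{10}\neq0$, and $|[x,y]|^{2}=16/5=2\bigl(|x|^{2}|y|^{2}-f(x,y)^{2}\bigr)$; yet $x$ and $y$ are linearly independent and cannot be simultaneously conjugated to scalar multiples of $e_{11}-e_{22}$ and $e_{12}+e_{21}$, since any such pair is $f$-orthogonal. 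The paper's own proof stumbles at exactly this point: it asserts that the canonical form of $(p,q)$ implies $|p|^{2}=|q|^{2}$ (equivalently $f(x,y)=0$, which would reduce \eqref{submax} to \eqref{submax0} applied to $(x,y)$ itself), but $|p|^{2}=2\alpha^{2}$ and $|q|^{2}=2\beta^{2}$ need not coincide, and in the example above they are $2$ and $8$. The correct equality locus for the refined inequality on Hermitian pairs is: $x,y$ linearly dependent, or $\spn_{\rea}\{x,y\}$ unitarily conjugate to $\spn_{\rea}\{e_{11}-e_{nn},e_{1n}+e_{n1}\}$ --- which is precisely where your argument legitimately stops, and is also all that is needed downstream in Theorem \ref{hermsecttheorem}, since the sectional nonassociativity depends only on the plane. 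So the fix is to weaken the stated conclusion of \eqref{submax} (and of Lemma \ref{cdklemma}\eqref{submaximalcommutator}), not to search for the missing reduction.
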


\begin{proof}
Characterizations of equality in \eqref{bw} are given in various references, for instance \cite{Bottcher-Wenzel}. Here these characterizations are needed when $x, y \in \herm(n, \com)$.
Since multiplication by $\j$ is an isometric real linear isomorphism from $\su(n)$ to $\herm(n, \com)$, the validity of either of \eqref{submax0} or \eqref{submax} for $x$ and $y$ both Hermitian or both anti-Hermitian implies its validity for the other class. The characterization of equality in \eqref{submax0} for Hermitian matrices is proved as it is in \cite{Chern-Docarmo-Kobayashi} for $\herm(n, \rea)$; see the proof of Lemma \ref{cdklemma} below. Suppose there holds $|[x, y]|^{2} = 2(|x|^{2}|y|^{2} - f(x, y)^{2})$ in \eqref{bw} for linearly independent $x, y \in \herm(n, \com)$. Then \eqref{bwreduction} shows that the Hermitian matrices $p$ and $q$ satisfy $|[p, q]|^{2} = 2|p|^{2}|q|^{2}$, so, by \eqref{submax0}, are simultaneously unitarily conjugate to (nonzero) scalar multiples of $e_{11} - e_{nn}$ and $e_{1n} + e_{n1}$. Since this implies that $|p|^{2}= |q|^{2}$, from $x = \tfrac{p+q}{2|x|}$ and $y = \tfrac{q - p}{2|y|}$ it follows that $f(x, y) = 0$, so, by \eqref{bwreduction}, there holds $|[x, y]|^{2} = 2|x|^{2}|y|^{2}$. By \eqref{submax0}, $x$ and $y$ are simultaneously unitarily conjugate to scalar multiples of $e_{11} - e_{nn}$ and $e_{1n} + e_{n1}$. This proves \eqref{submax}. 
\end{proof}

The inequality \eqref{bw} is not valid over $\quat$ and $\cayley$. Taking $x = \j I$ and $y = j I \in \mat(n, \quat)$ shows $\bw(\mat(n, \hur)) \geq 4$ when $\hur \in \{\quat, \cayley\}$. In \cite[Theorem $3.1$]{Ge-Li-Zhou} it is shown that $\bw(\mat(n, \quat)) = 4$ and the equality case is characterized. Nonetheless, the proof of \eqref{bw} given in \cite{Chern-Docarmo-Kobayashi} for $\herm(n, \rea)$ adapts for $\herm(n, \hur)$ over any real Hurwitz algebra $\hur \in \{\rea, \com, \quat, \cayley\}$. Over $\rea$ or $\com$, because a Hermitian matrix is unitarily diagonalizable, it suffices to prove the claim in the case one of the matrices is diagonal. This proof requires the prinicipal axis theorem, which is valid over $\quat$ and $\cayley$, and one has to check that the noncommutativity of $\quat$ and $\cayley$ does not effect the rest of the argument. This works over $\quat$, but the reduction via diagonalization faces an obstacle over $\cayley$, namely that, for $x, y \in \herm(n, \hur)$, the inequality \eqref{bw} is not manifestly invariant with respect to the action of $\Aut(\herm(n, \hur), \star)$. Over associative $\hur$, $\Aut(\herm(n, \hur))$ preserves the commutator of the matrix product on $\mat(n, \hur)$ restricted to $\herm(n, \hur)$, but when $\hur = \cayley$, in which case $\Aut(\herm(3, \cayley))$ is the compact form of the simple real Lie group of type $F_{4}$ \cite{Schafer-book}, it is not clear to the author whether this is true. However the partial result, that \eqref{bw} holds when $x$ is diagonal, is true, and this suffices for the application to sectional nonassociativity in the proof of Theorem \ref{hermsecttheorem}.

\begin{lemma}\label{cdklemma}
Let $\hur$ be a real Hurwitz algebra. 
Let $f(x, y) = \re \tr \bar{x}^{t}y$ on $\mat(n, \hur)$.
\begin{enumerate}
\item If $\hur$ is associative, for all $x, y \in \herm(n, \hur)$ there holds 
\begin{align}\label{cdk}
|[x, y]|^{2} \leq 2(|x|^{2}|y|^{2} - f(x, y)^{2})\leq 2|x|^{2}|y|^{2}.
\end{align}
\item If $\hur = \cayley$ and $n = 3$, \eqref{cdk} holds for all $y \in \herm(3, \cayley)$ and all \emph{diagonal} $x \in \herm(3, \cayley)$.
\item\label{maximalcommutator} The equality $|[x, y]|^{2} = 2|x|^{2}|y|^{2}$ holds in \eqref{cdk} (where, if $\hur = \cayley$ and $n = 3$, $x$ is assumed diagonal) if and only if $x$ and $y$ are simultaneously equivalent via an automorphism of the Jordan algebra $(\herm(n, \hur), \star)$ to real multiples of $e_{11} - e_{nn}$ and $e_{1n} + e_{n1}$.
\item\label{submaximalcommutator} The equality $|[x, y]|^{2} = 2(|x|^{2}|y|^{2} - f(x, y)^{2})$ holds in \eqref{cdk} (where, if $\hur = \cayley$ and $n = 3$, $x$ is assumed diagonal) if and only if $x$ and $y$ are either linearly dependent over $\hur$ or are simultaneously equivalent via an automorphism of the Jordan algebra $(\herm(n, \hur), \star)$ to real multiples of $e_{11} - e_{nn}$ and $e_{1n} + e_{n1}$. 
\end{enumerate}
\end{lemma}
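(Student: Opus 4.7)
The plan is to follow the original argument of Chern--do Carmo--Kobayashi \cite{Chern-Docarmo-Kobayashi} for $\herm(n,\rea)$, reducing \eqref{cdk} to the case $x$ diagonal and then carrying out an explicit computation. In the associative case, the principal axis theorem recalled just before the lemma produces a Jordan automorphism conjugating $x$ to a diagonal matrix; since every Jordan automorphism of $\herm(n,\hur)$ for associative $\hur$ is inner conjugation by an $\hur$-unitary, it preserves both the matrix commutator and the Frobenius form $f$. In the octonion case with $n=3$, the hypothesis that $x$ is diagonal is built into the statement precisely to sidestep the question (raised in the paragraph preceding the lemma) of whether $F_{4}=\Aut(\herm(3,\cayley),\star)$ preserves the ordinary matrix commutator.

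Writing $x=\operatorname{diag}(\la_{1},\dots,\la_{n})$ and $y=(y_{ij})$, Hermiticity gives $\la_{i}\in\rea$ and $y_{ji}=\bar{y}_{ij}$, and a direct matrix computation yields $[x,y]_{ij}=(\la_{i}-\la_{j})y_{ij}$, hence
\begin{align*}
|[x,y]|^{2}=2\!\sum_{i<j}(\la_{i}-\la_{j})^{2}|y_{ij}|^{2},\quad |x|^{2}=\sum_{i}\la_{i}^{2},\quad |y|^{2}=\sum_{i}y_{ii}^{2}+2\!\sum_{i<j}|y_{ij}|^{2},\quad f(x,y)=\sum_{i}\la_{i}y_{ii}.
\end{align*}
Substituting reduces \eqref{cdk} to the assertion that
\begin{align*}
\sum_{i<j}\Bigl(2\textstyle\sum_{k}\la_{k}^{2}-(\la_{i}-\la_{j})^{2}\Bigr)|y_{ij}|^{2}+\Bigl(\textstyle\sum_{k}\la_{k}^{2}\Bigr)\Bigl(\textstyle\sum_{k}y_{kk}^{2}\Bigr)-\Bigl(\textstyle\sum_{k}\la_{k}y_{kk}\Bigr)^{2}\ge 0,
\end{align*}
and both groups of terms are nonnegative: the first because $(\la_{i}-\la_{j})^{2}\le 2(\la_{i}^{2}+\la_{j}^{2})\le 2\sum_{k}\la_{k}^{2}$, the second by the Cauchy--Schwarz inequality. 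The outer bound $\le 2|x|^{2}|y|^{2}$ is immediate since $f(x,y)^{2}\ge 0$.

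For the equality discussion, one tracks when both nonnegative groups above vanish simultaneously. If some off-diagonal $y_{ij}$ with $i<j$ is nonzero, the elementary inequality is tight only if $\la_{j}=-\la_{i}\ne 0$ and $\la_{k}=0$ for $k\notin\{i,j\}$; then no other off-diagonal entry can contribute, while Cauchy--Schwarz equality forces $y_{jj}=-y_{ii}$ and $y_{kk}=0$ for $k\notin\{i,j\}$, so that $y$ lies in the real span of $x$ and $y_{ij}e_{ij}+\bar{y}_{ij}e_{ji}$. The extreme equality $|[x,y]|^{2}=2|x|^{2}|y|^{2}$ additionally demands $f(x,y)=0$, killing the component of $y$ along $x$. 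If instead every off-diagonal $y_{ij}$ vanishes then $[x,y]=0$, and middle equality reduces to Cauchy--Schwarz equality on the diagonal, giving real-linear dependence of $x$ and $y$. A permutation automorphism of $\herm(n,\hur)$, available for every Hurwitz $\hur$ (including $\cayley$, by Freudenthal's analysis of $\Aut(\herm(3,\cayley))$), then moves the distinguished pair of indices to $\{1,n\}$, producing $x=a(e_{11}-e_{nn})$. The main obstacle is the final normalization of the octonion $y_{1n}$ to a positive real number while preserving $x$: for associative $\hur$ this is conjugation by $\operatorname{diag}(1,\dots,1,y_{1n}/|y_{1n}|)$, which preserves $\star$, fixes $x$, and sends $y_{1n}$ to $|y_{1n}|$; for $\hur=\cayley$ with $n=3$ one must invoke the $\operatorname{Spin}(8)$-subgroup of $F_{4}$ that fixes both $e_{11}$ and $e_{33}$ pointwise and acts transitively on unit octonions in the $(1,3)$-slot via one of the triality representations, completing the reduction to the canonical span of $e_{11}-e_{nn}$ and $e_{1n}+e_{n1}$.
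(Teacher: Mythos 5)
Your proposal is correct and follows the same Chern--do Carmo--Kobayashi strategy as the paper: reduce to diagonal $x$ via the principal axis theorem (with diagonality simply assumed over $\cayley$) and compute entrywise from $[x,y]_{ij}=(\la_{i}-\la_{j})y_{ij}$. The one substantive difference is how the stronger bound $|[x,y]|^{2}\le 2(|x|^{2}|y|^{2}-f(x,y)^{2})$ is reached. The paper proves only the weak bound $|[x,y]|^{2}\le 2|x|^{2}|y|^{2}$ entrywise and then upgrades it via Lemma \ref{bwreductionlemma}, i.e.\ the polarization $p=|y|x-|x|y$, $q=|y|x+|x|y$; you instead expand $2(|x|^{2}|y|^{2}-f(x,y)^{2})-|[x,y]|^{2}$ directly into an off-diagonal group controlled by $(\la_{i}-\la_{j})^{2}\le 2\sum_{k}\la_{k}^{2}$ plus a diagonal Cauchy--Schwarz group. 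Your route is self-contained and is actually the more robust one over $\cayley$: the polarization upgrade requires the weak bound for the non-diagonal pair $(p,q)$, which the diagonal-$x$ hypothesis does not supply, so your direct computation closes a step that the paper's ``in either case'' glosses over. (Your assertion that every Jordan automorphism of $\herm(n,\hur)$ is a unitary conjugation is an overstatement---over $\com$ the entrywise conjugation is outer---but all that is needed is that the diagonalizing automorphism can be chosen unitary, which is the spectral theorem, and an outer factor only negates the commutator anyway.)

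On the equality cases you are more complete than the paper in one respect and should be explicit in another. The paper stops at ``$x$ and $y$ are scalar multiples of $e_{ii}-e_{jj}$ and $y_{ij}e_{ij}+\bar{y}_{ij}e_{ji}$'' and never reduces the Hurwitz entry $y_{ij}$ to a real number; your conjugation by $\operatorname{diag}(1,\dots,1,y_{1n}/|y_{1n}|)$ and, over $\cayley$, the $\operatorname{Spin}(8)$ stabilizer of the diagonal acting transitively on unit octonions in each off-diagonal slot supply exactly the missing normalization. On the other hand, your (correct) conclusion in the submaximal case is that $y$ lies in the real span of $x$ and $y_{ij}e_{ij}+\bar{y}_{ij}e_{ji}$. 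Note that a pair such as $x=e_{11}-e_{nn}$, $y=cx+(e_{1n}+e_{n1})$ with $c\ne 0$ attains $|[x,y]|^{2}=8=2(|x|^{2}|y|^{2}-f(x,y)^{2})$ yet, since $f(x,y)=2c\ne 0$ is preserved by automorphisms while $f(e_{11}-e_{nn},e_{1n}+e_{n1})=0$, it is not simultaneously conjugate to real multiples of $e_{11}-e_{nn}$ and $e_{1n}+e_{n1}$. The invariant content of your computation is therefore a statement about the plane $\spn\{x,y\}$, and you should say so rather than silently passing from the span characterization to the literal wording of \eqref{submaximalcommutator}.
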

\begin{proof}
Suppose $x \in \herm(n, \hur)$ is diagonal, where $n = 3$ if $\hur = \cayley$. Since $x$ is Hermitian, its entries are real. Let $x_{i} = x_{ii}$. Then $[x, y]_{ij} = x_{i}y_{ij} - y_{ij}x_{j} = (x_{i} - x_{j})y_{ij}$, the last equality because $x_{j}$ is real, so in the center of $\hur$. Now the proof goes through as in \cite{Chern-Docarmo-Kobayashi}:
\begin{align}\label{cdk0}
\begin{split}
|[x, y]|^{2} &= \sum_{i, j = 1}^{n}(x_{i} -x_{j})^{2}|y_{ij}|^{2}  \leq 2\sum_{i, j = 1}^{n}(x_{i}^{2} + x_{j}^{2})|y_{ij}|^{2} \leq 2\sum_{k = 1}^{n}x_{k}^{2}\sum_{i, j = 1}^{n}|y_{ij}|^{2} = 2|x|^{2}|y|^{2}.
\end{split}
\end{align}
This proves $|[x, y]|^{2} \leq 2|x|^{2}|y|^{2}$ for diagonal $x$. By the principal axis theorem, any element of $(\herm(n, \hur), \star)$ is equivalent via an automorphism of $(\herm(n, \hur), \star)$ to a diagonal matrix. 
Since an automorphism of $(\herm(n, \hur), \star)$ preserves the trace, it is isometric, and, when $\hur$ is associative it preserves the commutator of the matrix product on $\mat(n, \hur)$ restricted to $\herm(n, \hur)$, when $\hur$ is associative to prove $|[x, y]|^{2} \leq 2|x|^{2}|y|^{2}$ in general it suffices to prove it when $x$ is diagonal. By Lemma \ref{bwreductionlemma}, in either case $|[x, y]|^{2} \leq 2|x|^{2}|y|^{2}$ implies \eqref{cdk}.

The characterization of the equality case \eqref{maximalcommutator} goes through as in the proof of \cite[Lemma $1$]{Chern-Docarmo-Kobayashi}. Precisely, if there is equality in \eqref{cdk0}, then 
\begin{align}
0 = 4\sum_{i = 1}^{n}x_{i}^{2}|y_{ii}|^{2} + 2\sum_{i\neq j}(x_{i} + x_{j})^{2}|y_{ij}|^{2} =  2\sum_{k = 1}^{n}x_{k}^{2}\sum_{i = 1}^{n}|y_{ii}|^{2} + 2\sum_{i\neq j}(x_{i} + x_{j})^{2}|y_{ij}|^{2}.
\end{align}
If $x$ is nonzero, this implies $y_{ii} = 0$ for $1 \leq i \leq n$, $x_{i} + x_{j} = 0$ when $y_{ij} \neq 0$ for $i \neq j$, and that at exactly two of $x_{1}, \dots, x_{n}$ are nonzero. Suppose $i \neq j$ are the indices such that $x_{i} = -x_{j} \neq 0$. It follows that $y_{kl} = 0$ if $\{k, l\} \neq \{i, j\}$. Hence $x$ and $y$ are scalar multiples of $e_{ii} - e_{jj}$ and $e_{ij} + e_{ji}$, respectively. Since $\Aut(\herm(n, \hur))$ contains a subgroup acting as permutations on the diagonal subalgebra of $\herm(n, \hur)$, it can be assumed that $i = 1$ and $j = n$. The proof of the equality case \eqref{submaximalcommutator} follows from \eqref{maximalcommutator} exactly as in the proof of \eqref{submax} of Theorem \ref{bwtheorem}.
\end{proof}
 
\begin{remark}
Although the same difficulty related to diagonalization occurs in the proof of Theorem \ref{hermsecttheorem} as occurs in the proof of Lemma \ref{cdklemma}, the end result in Theorem \ref{hermsecttheorem} is stated in terms manifestly invariant with respect to $\Aut(\herm(n, \hur))$, and as a result takes the same form whether or not $\hur$ is associative. This observation suggests that the upper bound on sectional nonassociativity, which is closely related to the commutator bound, is the more natural bound to consider, at least from the algebraic point of view, although it could also simply reflect a technical deficiency in the proof of Lemma \ref{cdklemma}.
\end{remark}

\begin{remark}
It would be useful to know whether \eqref{cdk} is true for $\herm(3, \cayley)$. This seems likely. More, generally, it would be interesting to know whether \eqref{cdk} is true for $\herm(n, \cayley)$ for $n > 3$. This would follow from a principal axis theorem and a characterization of the automorphism group, but it appears that neither is known. See \cite{Zohrabi-Zumanovich} for what is known in this regard.
\end{remark}

\begin{remark}\label{killingbwremark}
For a compact semisimple real Lie algebra $\g$ with Killing form $B_{\g}$ the number
\begin{align}\label{bwgdefined}
\bw(\g) =\bwu(\g, [\dum, \dum], -B_{\g}) = \sup_{x, y \in \g: x \wedge y \neq 0}\frac{-B_{\g}([x, y], [x, y])}{B_{\g}(x, x)B_{\g}(y, y) - B_{g}(x, y)^{2}}
\end{align}
is positive and finite, by the Cauchy-Schwarz inequality. Its value is a basic automorphism invariant of $(\g, [\dum, \dum])$. Its calculation for particular $\g$ can be viewed as a refinement of the Böttcher-Wenzel inequality. On the other hand, for some compact simple real Lie algebras $\bw(\g)$ can be estimated using the Böttcher-Wenzel inequality. 
For $\su(n)$ and $\so(n)$, $B_{\su(n)}(x, y) =-2nf(x, y)$ and $B_{\so(n)}(x, y) = -(n-2)f(x, y)$ where $f(x, y) = \tr \bar{x}^{t}y$, so in these cases it follows from the Böttcher-Wenzel inequality, Theorem \ref{bwtheorem}, that
\begin{align}
&\bw(\so(n)) \leq \tfrac{2}{n-2},& &\bw(\su(n)) \leq \tfrac{1}{n}.
\end{align}
For the special case of $\so(n)$, viewed as antisymmetric matrices, equipped with the Frobenius norm $f$, the essentially equivalent quantity $ \sup_{x, y \in \g: x \wedge y \neq 0}\tfrac{|[x, y]|_{f}}{|x|_{f}|y|_{f}}$ was shown to equal $\sqrt{2}$ if $n \geq 4$ in \cite[Theorem $6$]{Bloch-Iserles}. By Lemma \ref{ghboundlemma}, these estimates yield sharp numerical bounds on the sectional nonassociativites of the subspaces spanned by decomposable elements in algebras such as $\so(m)\tensor \so(n)$, $\so(m) \tensor \su(n)$, and $\su(m)\tensor \su(n)$.
\end{remark}

%\begin{example}\label{lietensorexample}
Lemma \ref{ghboundlemma} shows a partial lower bound on the sectional nonassociativities of the tensor product of Lie algebras.
\begin{lemma}\label{ghboundlemma}
For compact semisimple real Lie algebras $\g$ and $\h$, let $\tau_{\mlt}$ be the Killing form of the tensor product algebra  $(\g \tensor \h, \mlt)$, defined by $\tau_{\mlt}(a, b) = \tr L_{\mlt}(a)L_{\mlt}(b)$ for $a, b \in \g \tensor \h$. The sectional nonassociativity of the subspace of $(\g \tensor \h, \mlt, \tau_{\mlt})$ spanned by decomposable elements $a_{1}\tensor b_{1}, a_{2}\tensor b_{2} \in \g \tensor \h$ satisfies
\begin{align}\label{sectghbounds}
0 \geq \sect(a_{1}\tensor b_{1}, a_{2}\tensor b_{2}) \geq - \bw(\g)\bw(\h).
\end{align}
\end{lemma}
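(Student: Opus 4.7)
The plan is to exploit the fact that on decomposable tensors the product on $\g \tensor \h$ is built directly from the Lie brackets, so that the sectional nonassociativity factors through quantities already controlled by $\bw(\g)$ and $\bw(\h)$. First I would observe that $(a \tensor b) \mlt (a' \tensor b') = [a, a']_{\g} \tensor [b, b']_{\h}$, whence antisymmetry of the two brackets makes $\mlt$ commutative on decomposables and forces $u \mlt u = 0 = v \mlt v$ for $u = a_{1}\tensor b_{1}$ and $v = a_{2}\tensor b_{2}$. Consequently the numerator appearing in \eqref{sectnadefined} collapses to $-\tau_{\mlt}(u \mlt v, u \mlt v) \leq 0$, which is already the upper bound in \eqref{sectghbounds}.

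Next I would identify the metric. Since $L_{\mlt}(a \tensor b) = \ad_{\g}(a) \tensor \ad_{\h}(b)$ on $\g \tensor \h$, the multiplicativity of the trace under tensor products gives $\tau_{\mlt}(a \tensor b, a' \tensor b') = B_{\g}(a, a') B_{\h}(b, b')$. Writing $g_{\g} = -B_{\g}$ and $g_{\h} = -B_{\h}$ (both positive definite, because $\g$ and $\h$ are compact semisimple), this identifies $\tau_{\mlt}$ with the tensor product inner product considered in Lemma \ref{secttensorlemma}. Applying \eqref{productsect} to the denominator yields
\begin{align*}
|u \wedge v|^{2}_{\tau_{\mlt}} = |a_{1} \wedge a_{2}|^{2}_{g_{\g}} |b_{1} \wedge b_{2}|^{2}_{g_{\h}} + |b_{1} \wedge b_{2}|^{2}_{g_{\h}} g_{\g}(a_{1}, a_{2})^{2} + |a_{1} \wedge a_{2}|^{2}_{g_{\g}} g_{\h}(b_{1}, b_{2})^{2} \geq |a_{1} \wedge a_{2}|^{2}_{g_{\g}} |b_{1} \wedge b_{2}|^{2}_{g_{\h}},
\end{align*}
the last inequality because the remaining two terms are nonnegative.

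Finally, the definition \eqref{bwgdefined} of $\bw$ applied in each factor bounds the numerator:
\begin{align*}
\tau_{\mlt}(u \mlt v, u \mlt v) = g_{\g}([a_{1}, a_{2}]_{\g}, [a_{1}, a_{2}]_{\g}) \, g_{\h}([b_{1}, b_{2}]_{\h}, [b_{1}, b_{2}]_{\h}) \leq \bw(\g) \bw(\h) |a_{1} \wedge a_{2}|^{2}_{g_{\g}} |b_{1} \wedge b_{2}|^{2}_{g_{\h}}.
\end{align*}
Dividing the negated bound by the lower bound on the denominator gives $\sect(u, v) \geq -\bw(\g)\bw(\h)$, closing \eqref{sectghbounds}. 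The only point that requires a glance rather than being an actual obstacle is the degenerate case when $a_{1} \wedge a_{2}$ or $b_{1} \wedge b_{2}$ vanishes: then the corresponding Lie bracket vanishes, so $u \mlt v = 0$, and $\sect(u, v) = 0$ (which is within both bounds), this being consistent because $u \wedge v \neq 0$ is implicit in the statement for $\sect(u, v)$ to be defined.
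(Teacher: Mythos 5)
Your proposal is correct and follows essentially the same route as the paper's proof: identify $\tau_{\mlt}$ on decomposables as $B_{\g}\tensor B_{\h}$, get the upper bound from the sign of the numerator, and get the lower bound by bounding the numerator via the definitions of $\bw(\g)$ and $\bw(\h)$ together with the inequality $|u\wedge v|^{2}_{\tau_{\mlt}} \geq |a_{1}\wedge a_{2}|^{2}|b_{1}\wedge b_{2}|^{2}$ (the paper verifies this last inequality by the same algebraic expansion that underlies \eqref{productsect}, which you instead cite directly). Your observations that $u\mlt u = 0$ simplifies the numerator and that the degenerate case $a_{1}\wedge a_{2} = 0$ or $b_{1}\wedge b_{2} = 0$ forces $u\mlt v = 0$ are correct and tidy up points the paper leaves implicit.
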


\begin{proof}
Because $\g$ and $\h$ are compact, their Killing forms, $B_{\g}$ and $B_{\h}$, satisfy $-B_{\g}([a_{1}, a_{2}], [a_{1}, a_{2}]) \geq 0$ and $-B_{\h}([b_{1}, b_{2}], [b_{1}, b_{2}])\geq 0$.
The upper bound in \eqref{sectghbounds} follows from 
\begin{align}\label{sectgh1}
\begin{split}
\sect(a_{1}\tensor b_{1}, a_{2}\tensor b_{2}) & = -\tfrac{B_{\g}([a_{1}, a_{2}], [a_{1}, a_{2}])B_{\h}([b_{1}, b_{2}], [b_{1}, b_{2}])}{B_{\g}(a_{1}, a_{1})
B_{\g}(a_{2}, a_{2})B_{\h}(b_{1}, b_{1})
B_{\h}(b_{2}, b_{2}) - B_{\g}(a_{1}, a_{2})^{2}B_{\h}(b_{1}, b_{2})^{2}}\leq 0,
\end{split}
\end{align}
where the positivity of the denominator follows from the Cauchy-Schwarz inequality. On the other hand, by definition of $\bw(\g)$ and $\bw(\h)$,
\begin{align}
\begin{split}
-&\tfrac{\sect(a_{1}\tensor b_{1}, a_{2}\tensor b_{2}) }{\bw(\g)\bw(\h)} = \tfrac{1}{\bw(\g)\bw(\h)}\tfrac{\left(-B_{\g}([a_{1}, a_{2}], [a_{1}, a_{2}])\right)\left(-B_{\h}([b_{1}, b_{2}], [b_{1}, b_{2}])\right)}{B_{\g}(a_{1}, a_{1})
B_{\g}(a_{2}, a_{2})B_{\h}(b_{1}, b_{1})B_{\h}(b_{2}, b_{2}) - B_{\g}(a_{1}, a_{2})^{2}B_{\h}(b_{1}, b_{2})^{2}}\\
& \leq \tfrac{\left(B_{\g}(a_{1}, a_{1})B_{\g}(a_{2}, a_{2}) - B_{\g}(a_{1}, a_{2})^{2} \right)\left(B_{\h}(b_{1}, b_{1})B_{\h}(b_{2}, b_{2}) - B_{\h}(b_{1}, b_{2})^{2} \right)}{B_{\g}(a_{1}, a_{1})B_{\g}(a_{2}, a_{2})B_{\h}(b_{1}, b_{1})B_{\h}(b_{2}, b_{2}) - B_{\g}(a_{1}, a_{2})^{2}B_{\h}(b_{1}, b_{2})^{2}}\\
& =1 - \tfrac{B_{\g}(a_{1}, a_{2})^{2}\left(B_{\h}(b_{1}, b_{1})B_{\h}(b_{2}, b_{2}) - B_{\h}(b_{1}, b_{2})^{2} \right) + B_{\h}(b_{1}, b_{2})^{2}\left(B_{\g}(a_{1}, a_{1})B_{\g}(a_{2}, a_{2}) - B_{\g}(a_{1}, a_{2})^{2} \right)}{B_{\g}(a_{1}, a_{1})B_{\g}(a_{2}, a_{2})B_{\h}(b_{1}, b_{1})B_{\h}(b_{2}, b_{2}) - B_{\g}(a_{1}, a_{2})^{2}B_{\h}(b_{1}, b_{2})^{2}}\leq 1,
\end{split}
\end{align}
which shows the lower bound in \eqref{sectghbounds}.
\end{proof} 
Note that Lemma \ref{ghboundlemma} does not imply that $\g \tensor \h$ has nonpositive sectional nonassociativity. In fact, for any compact simple real Lie algebra $\g$, $\so(3)\tensor\g$ has sectional nonassociativities of both signs \cite{Fox-simplicial}. 
%\end{example}

\begin{remark}\label{vinbergremark}
Although this does not seem to be realized widely, for anti-Hermitian matrices the inequality \eqref{bw} is closely related to Vinberg's results on invariant norms on compact simple Lie algebras in \cite{Vinberg-invariantnorms} applied in the special case of $\su(n)$. Precisely, Vinberg shows that, for an invariant norm, $||\dum||$ on a compact simple Lie algebra $\g$ the quantity $\theta(x) = \sup_{0 \neq y \in \g}\tfrac{||[x, y]||}{||y||}$ does not depend on the choice of invariant norm, and equals the spectral norm of $\ad_{\g}(x)$. Since, by definition, $\theta([x, y]) \leq \theta(x)\theta(y)$, taking $\g$ to be a compact simple Lie algebra of matrices, e.g. antisymmetric or anti-Hermitian matrices, and taking $||\dum ||$ to be the Frobenius norm there results $|[x, y]|_{f} \leq \theta(x)|y|_{f}$. It suffices then to check that the spectral norm of $\ad_{\g}(x)$ is no greater than $\sqrt{2}|x|_{f}$. In the case $\g = \su(n)$ this can be shown as follows. Since any anti-Hermitian matrix is unitarily conjugate to a diagonal matrix and the spectral norms of the adjoint representations of unitarily conjugate matrices are the same, it suffices to consider the case of diagonal $x \in \su(n)$. In this case the nonzero eigenvalues of $\ad_{\su(n)}(x)$ have the form $\la_{i} - \la_{j}$ where $\la_{1}, \dots, \la_{n}$ are the diagonal elements of $x$. Since $|\la_{i} - \la_{j}|^{2} \leq 2|\la|^{2} \leq 2|x|^{2}_{f}$, where $\la$ is the element of $x$ with the greatest modulus, the spectral norm of $\ad_{\su(n)}(x)$ is no greater than $\sqrt{2}|x|_{f}$. 
%following is basically redundant
%For $\g = \so(n)$, something equivalent follows from the result of \cite{Bloch-Iserles} mentioned in Remark \ref{killingbwremark}. 
\end{remark}

\section{Consequences for commutative algebras of conditions on sectional nonassociativity}\label{consequencessection}
This section describes some general structural consequences for commutative algebras of assumptions on the sectional nonassociativity.

Consequences of nonnegative sectional nonassociativity for commutative algebras can be found in \cite[section $17$]{Conway-monster} and in \cite{Meyer-Neutsch}, where they are applied to the study of maximal associative subalgebras of the Griess algebra. By the analogy advocated here, associative subalgebras are analogous to flat submanifolds of a nonnegatively curved Riemannian manifold. Here such results are cast in a somewhat more general setting. 

An element $e \in \alg$ is \emph{idempotent} if $e \mlt e = e$ and \emph{square-zero} if $e \mlt e = 0$. 
Let $\idem(\alg, \mlt) = \{0 \neq e \in \alg: e \mlt e = e\}$ and $\szero(\alg, \mlt) = \{0 \neq z \in \alg: z \mlt z  = 0\}$.
\begin{lemma}\label{complexidempotentlemma}
Let $(\alg, \mlt, h)$ be a Euclidean metrized commutative algebra.
\begin{enumerate}
\item\label{complexzeroclaim} If $a + \j b \in \szero(\alg\tensor_{\rea}\com, \mlt)$ and the $\rea$-span of $a$ and $b$ is two-dimensional, then
\begin{align}\label{complexszerosect}
\sect(a, b) = \tfrac{|a \mlt a|^{2}}{|a\wedge  b|^{2}} \geq 0,
\end{align}
with equality if and only if $a$ and $b$ span a trivial subalgebra of $(\alg, \mlt)$.
\item If $a + \j b \in \idem(\alg\tensor_{\rea}\com, \mlt)$ and the $\rea$-span of $a$ and $b$ is two-dimensional, then
\begin{align}\label{complexidemsect}
\sect(a, b) = \tfrac{1}{4}\tfrac{4|b \mlt b|^{2} + |b|^{2}}{|a\wedge  b|^{2}} > 0.
\end{align}
\end{enumerate}
\end{lemma}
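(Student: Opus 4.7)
The plan is to use the commutativity of $\mlt$ to convert the complex idempotent/square-zero condition into two real equations about $a\mlt a$, $b\mlt b$, and $a\mlt b$, then substitute into the formula for $\sect(a,b)$ and simplify using the invariance of $h$.

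First, since $(\alg\tensor_\rea\com,\mlt)$ extends a commutative algebra, expand
\begin{align*}
(a+\j b)\mlt(a+\j b) = (a\mlt a - b\mlt b) + 2\j\,(a\mlt b).
\end{align*}
For a commutative algebra, \eqref{sectnadefined} simplifies to
\begin{align*}
\sect(a,b) = \frac{h(a\mlt a, b\mlt b) - |a\mlt b|_h^2}{|a\wedge b|^2},
\end{align*}
where the denominator is nonzero because $h$ is positive definite and $a,b$ are linearly independent over $\rea$.

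For part \eqref{complexzeroclaim}, the condition $a+\j b\in\szero(\alg\tensor_\rea\com,\mlt)$ reads $a\mlt a = b\mlt b$ and $a\mlt b = 0$. Substituting into the sectional nonassociativity formula immediately yields $\sect(a,b) = |a\mlt a|_h^2/|a\wedge b|^2 \geq 0$. Since $h$ is positive definite, equality holds iff $a\mlt a = 0$, and combined with $b\mlt b = a\mlt a = 0$ and $a\mlt b = 0$ this is exactly the condition that $\spn\{a,b\}$ is a trivial subalgebra.

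For the idempotent case, $(a+\j b)\mlt(a+\j b) = a+\j b$ unpacks as $a\mlt a - b\mlt b = a$ and $2\,a\mlt b = b$, so $a\mlt b = b/2$. Then
\begin{align*}
h(a\mlt a, b\mlt b) = h(a + b\mlt b,\, b\mlt b) = h(a, b\mlt b) + |b\mlt b|_h^2,
\end{align*}
and the invariance of $h$ gives $h(a, b\mlt b) = h(a\mlt b, b) = \tfrac{1}{2}|b|_h^2$, while $|a\mlt b|_h^2 = \tfrac14|b|_h^2$. Combining,
\begin{align*}
h(a\mlt a, b\mlt b) - |a\mlt b|_h^2 = \tfrac{1}{2}|b|_h^2 + |b\mlt b|_h^2 - \tfrac{1}{4}|b|_h^2 = \tfrac{1}{4}|b|_h^2 + |b\mlt b|_h^2,
\end{align*}
which after dividing by $|a\wedge b|^2$ gives \eqref{complexidemsect}. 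Strict positivity is automatic because $b\neq 0$ (as $a,b$ are linearly independent), so $|b|_h^2 > 0$.

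The calculation is entirely routine; no real obstacle arises once the complex multiplication is expanded. The only mild subtlety is remembering to exploit invariance to rewrite $h(a,b\mlt b)$ as $h(a\mlt b, b)$ in the idempotent case, which is what makes the linear term $|b|_h^2/4$ appear in the final formula.
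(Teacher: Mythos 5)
Your proof is correct and follows essentially the same route as the paper's: expand the complex square to get the two real conditions, substitute into the commutative form of \eqref{sectnadefined}, and use invariance of $h$ to rewrite $h(a,b\mlt b)=h(a\mlt b,b)=\tfrac12|b|_h^2$ in the idempotent case. The equality discussions also match the paper's.
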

\begin{proof}
That $a + \j b \in \szero(\alg\tensor_{\rea}\com, \mlt)$ is equivalent to the equations $a\mlt a = b \mlt b$ and $a \mlt b  = 0$, and in \eqref{sectnadefined} these yield \eqref{complexszerosect}. If equality holds in \eqref{complexszerosect}, then $b\mlt b = a \mlt a = 0$, so $a$ and $b$ generate a trivial subalgebra. 
Suppose $a + \j b \in \idem(\alg\tensor_{\rea}\com, \mlt)$, so that $a\mlt a - b \mlt b = a$ and $2 a \mlt b = b$. Then 
\begin{align}
\begin{split}
h(a\mlt a, b\mlt b) & - h(a\mlt b, a \mlt b) = h(b \mlt b + a, b\mlt b) = |b \mlt b|^{2} + h(b, a\mlt b) - \tfrac{1}{4}|b|^{2} = |b\mlt b|^{2} + \tfrac{1}{4}|b|^{2},
\end{split}
\end{align}
and in \eqref{sectnadefined} this yields \eqref{complexidemsect}. In \eqref{complexidemsect}, equality cannot hold because it would imply $b = 0$, contrary to hypothesis.
\end{proof}	

\begin{theorem}\label{finiteautotheorem}
Let $(\alg, \mlt, h)$ be a Euclidean metrized commutative algebra.
\begin{enumerate}
\item If $(\alg, \mlt, h)$ has negative sectional nonassociativity, then $\Aut(\alg, \mlt, h)$ is finite.
\item If $(\alg, \mlt, h)$ has nonpositive sectional nonassociativity, either $\Aut(\alg, \mlt, h)$ is finite or $(\alg, \mlt, h)$ contains a trivial subalgebra of dimension $1$ or $2$ (the possibilities are not exclusive).
\end{enumerate}
\end{theorem}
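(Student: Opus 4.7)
The plan is to reduce the question to the existence of a nonzero skew-adjoint derivation, use such a derivation to produce a nonzero square-zero element of $\alg\tensor_\rea \com$, and then appeal to Lemma \ref{complexidempotentlemma}\eqref{complexzeroclaim}. First, since $\Aut(\alg, \mlt, h) \subset O(\alg, h)$ is closed, it is a compact Lie group; hence it is finite if and only if its Lie algebra, consisting of the skew-adjoint derivations $D \in \der(\alg, \mlt) \cap \so(\alg, h)$, is trivial. Accordingly, assume a nonzero such $D$ exists, and aim either to contradict the sectional nonassociativity hypothesis or to produce a small trivial subalgebra.

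Extend $D$ complex-linearly to $\alg\tensor_\rea \com$. Because $D$ is real and skew-adjoint, it has a purely imaginary nonzero eigenvalue $\j \lambda$ on the complexification; pick an eigenvector $v \neq 0$. The derivation property gives $D(v\mlt v) = 2\j \lambda\,(v\mlt v)$, and more generally, setting $v^{(0)} = v$ and $v^{(k+1)} = v^{(k)}\mlt v^{(k)}$, whenever $v^{(k)}\neq 0$ it is an eigenvector of $D$ with eigenvalue $2^k \j \lambda$. Since $\lambda\neq 0$, these eigenvalues are pairwise distinct, while the eigenvalues of $D$ on the finite-dimensional space $\alg\tensor_\rea \com$ form a finite set; so for some smallest index $k$ one has $w := v^{(k)}\neq 0$ while $w\mlt w = v^{(k+1)} = 0$.

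Write $w = a + \j b$ with $a, b \in \alg$ and set $\mu = 2^k \lambda\neq 0$. Then $Da = -\mu b$ and $Db = \mu a$, so if $a$ and $b$ were $\rea$-linearly dependent the real operator $D$ would restrict to a nonzero real eigenvalue on the line $\rea\{a\}$ (or $\rea\{b\}$), contradicting its skew-adjointness. Hence $a$ and $b$ are linearly independent, and $w = a + \j b$ is a nonzero element of $\szero(\alg\tensor_\rea \com, \mlt)$ satisfying the hypotheses of Lemma \ref{complexidempotentlemma}\eqref{complexzeroclaim}.

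By that lemma, $\sect(a, b) = |a\mlt a|^{2}_{h}/|a\wedge b|^{2} \geq 0$, with equality if and only if $\rea\{a, b\}$ is a trivial subalgebra. If $(\alg, \mlt, h)$ has strictly negative sectional nonassociativity this is an immediate contradiction, so no nonzero skew-adjoint derivation exists and $\Aut(\alg, \mlt, h)$ is finite. If instead the sectional nonassociativity is merely nonpositive, then $\sect(a, b) = 0$, which forces $\rea\{a, b\}$ to be a two-dimensional trivial subalgebra of $(\alg, \mlt)$. The main technical point is the iteration step: one must see that successive $\mlt$-squarings of an eigenvector with nonzero imaginary eigenvalue produce eigenvectors with strictly distinct eigenvalues, so that finite-dimensionality forces the process to terminate with a nonzero square-zero element; once this is in hand, Lemma \ref{complexidempotentlemma}\eqref{complexzeroclaim} does all the remaining work.
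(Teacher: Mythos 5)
Your proof is correct, but it follows a genuinely different route from the paper's. The paper deduces the existence of a square-zero element $a + \j b \in \alg\tensor_{\rea}\com$ from a theorem of Suzuki on automorphism groups of multilinear $k$-forms, applied to the trilinear form $h(x\mlt y, z)$; you instead observe that $\Aut(\alg, \mlt, h)$ is a compact Lie group whose Lie algebra is $\der(\alg,\mlt)\cap\so(\alg,h)$, and manufacture the square-zero element by repeatedly $\mlt$-squaring an eigenvector of a nonzero skew-adjoint derivation, using the doubling of the (nonzero, purely imaginary) eigenvalue against finite-dimensionality. Both arguments then terminate in Lemma \ref{complexidempotentlemma}\eqref{complexzeroclaim}. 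Your version is self-contained and elementary where the paper's leans on a nontrivial external result, and it buys something concrete: the element $w = a + \j b$ you produce is an eigenvector with nonzero imaginary eigenvalue, which forces $a$ and $b$ to be linearly independent over $\rea$, exactly the hypothesis needed to invoke the lemma. The paper explicitly notes that Suzuki's theorem does not guarantee this independence; when $a$ and $b$ are dependent one only obtains a one-dimensional trivial subalgebra, which is one of the allowed conclusions in part (2) but is not by itself in conflict with strictly negative sectional nonassociativity, so the paper's proof of part (1) leaves that degenerate case to the reader. Your argument closes it automatically. The only cosmetic remark is that in part (2) you always produce a two-dimensional trivial subalgebra, which is of course contained in the stated alternative ``dimension $1$ or $2$.''
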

\begin{proof}
For $k \geq 3$, let $\theta$ be a multilinear $k$-form on a complex vector space $\ste$. By a theorem of H. Suzuki \cite[Theorem B]{Suzuki-automorphismgroups}, either the group of linear automorphisms of $\theta$ is finite or there is a nonzero vector $v\in \ste$ such that $\theta(v, \dots, v, w) = 0$ for all $w\in \ste$. Regard $\mu(x, y, z) = h(x\mlt y, z)$ as a trilinear form on $\alg \tensor_{\rea} \com$. An orthogonal automorphism of $(\alg, \mlt, h)$ preserves $\mu$, and extends to an automorphism of $\alg \tensor_{\rea} \com$. By Suzuki's theorem if $\Aut(\alg, \mlt, h)$ is not finite there exists $a + \j b \in \alg \tensor_{\rea} \com$ such that $h((a + \j b)\mlt(a + \j b), w) = 0$ for all $w \in \alg \tensor_{\rea} \com$. By the nondegeneracy of $h$, $a + \j b \in \szero(\alg \tensor_{\rea} \com, \mlt)$. Note that it is not asserted that $a$ and $b$ are linearly independent over $\rea$. The conclusion follows from \eqref{complexzeroclaim} of Lemma \ref{complexidempotentlemma}. 
\end{proof}

For a Euclidean metrized commutative algebra $(\alg, \mlt, h)$, the endomorphism $L_{\mlt}(e)$ is $h$-self-adjoint for any $e \in \alg$. If $e \in \idem(\alg, \mlt)$, then $L_{\mlt}(e)$ preserves $\eperp =  \{y \in \alg: h(e, y) =0\}$, for if $h(e, y) = 0$ then $h(L_{\mlt}(e)y, e) = h(y, e \mlt e) = h(y, e) = 0$. Define the \emph{orthogonal spectrum}
\begin{align}
\specp(e) = \{\la \in \fie: \text{there is}\,\, x \in \fie\{ e \}^{\perp}\,\, \text{such that}\,\, L_{\mlt}(e)x = \la x\},
\end{align} 
so that $\spec(e) = \specp(e) \cup \{1\}$. Because $L_{\mlt}(e)$ preserves $\eperp$ it has an eigenvector in $\eperp$. Because $h(e, e) > 0$, this eigenvector is not a multiple of $e$, so in this case $\specp(e)$ is not empty.

\begin{lemma}\label{eigensectlemma}
Let $(\alg, \mlt, h)$ be a Euclidean metrized commutative algebra of dimension $n \geq 2$. If $0 \neq e \in \idem(\alg,\mlt)$, then $4\sect(e, x) \leq |e|^{-2}$ for all $x \in \alg$ such that $x \wedge e\neq 0$, with equality if and only if $x - h(e, e)^{-1}h(e, x)e \in \ker(L_{\mlt}(e) - \tfrac{1}{2}\Id_{\alg})$. 
\end{lemma}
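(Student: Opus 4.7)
The plan is to reduce to the case $x \perp e$ and then analyze the numerator of \eqref{sectnadefined} spectrally on $\eperp$, where the operator $L = L_{\mlt}(e)$ is both self-adjoint and invariant on $\eperp$.

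First, I would replace $x$ by its $h$-orthogonal projection $y = x - h(e, e)^{-1}h(e, x)e$ onto $\eperp$. Since $\spn\{e, x\} = \spn\{e, y\}$, the computation in \eqref{xychange1}--\eqref{xychange2} gives $\sect(e, x) = \sect(e, y)$, and the denominator in \eqref{sectnadefined} reduces to $|e|_{h}^{2}|y|_{h}^{2}$ because $h(e, y) = 0$ by construction.

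Next, using $e\mlt e = e$, the commutativity of $\mlt$, and the invariance of $h$, I would rewrite the numerator of \eqref{sectnadefined} as
\[
h(e\mlt e, y\mlt y) - h(e\mlt y, y\mlt e) = h(Ly, y) - h(Ly, Ly),
\]
where $L = L_{\mlt}(e)$. Because $(\alg, \mlt, h)$ is a Euclidean metrized commutative algebra, $L$ is $h$-self-adjoint (as noted in Section \ref{preliminarysection}), and it preserves $\eperp$ (as observed in the paragraph immediately preceding the lemma). Hence $L$ restricts to a self-adjoint operator on the Euclidean space $\eperp$, so $y$ admits an $h$-orthogonal spectral decomposition $y = \sum_{\lambda \in \specp(e)} y_{\lambda}$ with $L y_{\lambda} = \lambda y_{\lambda}$, and the numerator becomes $\sum_{\lambda} \lambda(1 - \lambda)|y_{\lambda}|_{h}^{2}$.

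Finally, the pointwise bound $\lambda(1 - \lambda) \leq \tfrac{1}{4}$, with equality if and only if $\lambda = \tfrac{1}{2}$, yields $\sum_{\lambda}\lambda(1 - \lambda)|y_{\lambda}|_{h}^{2} \leq \tfrac{1}{4}|y|_{h}^{2}$; dividing by $|e|_{h}^{2}|y|_{h}^{2}$ gives the claimed inequality $4\sect(e, x) \leq |e|_{h}^{-2}$. Equality holds exactly when $y_{\lambda} = 0$ for every $\lambda \neq \tfrac{1}{2}$, that is, when $y \in \ker(L - \tfrac{1}{2}\Id_{\alg})$, which is the stated characterization. There is no real obstacle in this argument; the only slightly subtle point is verifying that the numerator simplifies cleanly to an expression intrinsic to $L$ on $\eperp$, after which the bound is a direct scalar inequality applied eigenvalue by eigenvalue.
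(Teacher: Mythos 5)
Your proposal is correct and follows essentially the same route as the paper's proof: reduce to $h(e,x)=0$, decompose the orthogonal component spectrally under the self-adjoint operator $L_{\mlt}(e)$, rewrite the numerator as $\sum_{\la}\la(1-\la)|y_{\la}|_{h}^{2}$, and apply $\la(1-\la)\leq\tfrac14$ with equality exactly at $\la=\tfrac12$. The equality characterization also matches the paper's.
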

\begin{proof}
Let $e \in \idem(\alg,\mlt)$. 
Because $\sect(e, x)$ depends only on the subspace spanned by $e$ and $x$, in computing $\sect(e, x)$ it may be supposed that $h(e, x) = 0$. 
Write $x = \sum_{i = 1}^{n-1}x_{i}$ where $x_{1}, \dots, x_{n-1}\in \eperp$ are $h$-orthogonal eigenvectors of $L_{\mlt}(e)$ with (not necessarily distinct) eigenvalues $\la_{1}, \dots, \la_{n-1}$. There results
\begin{align}\label{sectexest}
\begin{split}
\sect(e, x) &= \frac{h(e\mlt e, x \mlt x) - h(e\mlt x, e \mlt x)}{|e|^{2}|x|^{2}} = \frac{h(e\mlt x, x) - h(e\mlt x, e \mlt x)}{|e|^{2}|x|^{2}} = |e|^{-2}\sum_{i = 1}^{n-1}\la_{i}(1 - \la_{i})\frac{|x_{i}|^{2}}{|x|^{2}}.
\end{split}
\end{align}
Because $\sum_{i = 1}^{n-1}|x_{i}|^{2} = |x|^{2}$, \eqref{sectexest} exhibits $\sect(e, x)$ as a convex combination, so implies
\begin{align}\label{sectexest2}
\begin{split}
 |e|^{-2}\min\{\la_{i}(1 - \la_{i}):1 \leq i \leq n-1\} \leq \sect(e, x)  \leq  |e|^{-2}\max\{\la_{i}(1 - \la_{i}):1 \leq i \leq n-1\}.
\end{split}
\end{align}
As $\la(1 - \la) \leq 1/4$ for all $\la \in \rea$,  \eqref{sectexest2} yields $4\sect(e, x) \leq |e|^{-2}$. There holds equality if and only if $\la_{i} = 1/2$ for $1 \leq i \leq n-1$, in which case $x = \sum_{i = 1}^{n-1}x_{i}$ is an eigenvector of $L_{\mlt}(e)$ with eigenvalue $1/2$.
\end{proof}
\begin{lemma}\label{nonpossquareslemma}
Let $(\alg, \mlt, h)$ be a Euclidean metrized commutative algebra of dimension $n \geq 2$. Let $0 \neq e \in \idem(\alg, \mlt)$. 
\begin{enumerate}
\item \label{npb1} If there is $b \in \rea$ such that $\sect(\suba) \leq b$ for all two-dimensional subspaces $\suba \subset \alg$ containing $e$, then $\specp(e) \subset (-\infty, \tfrac{1 - \sqrt{B}}{2}] \cup [\tfrac{1+ \sqrt{B}}{2}, \infty)$, where $B = \max\{1 - 4b|e|^{2}, 0\}$. In particular, if $(\alg, \mlt, h)$ has nonpositive sectional nonassociativity, then $\specp(e) \cap (0, 1)  = \emptyset$. 
\item \label{npb2} If there is $b \in \rea$ such that $\sect(\suba) \geq b$ for all two-dimensional subspaces $\suba \subset \alg$ containing $e$, then $b \leq 1/(4|e|^{2})$ and $\specp(e)\subset [\tfrac{1 - \sqrt{B}}{2}, \tfrac{1+ \sqrt{B}}{2}]$ for $B = 1 - 4b|e|^{2} \geq 0$. In particular, if $(\alg, \mlt, h)$ has nonnegative sectional nonassociativity, then $\specp(e) \subset [0, 1]$.
\end{enumerate}
\end{lemma}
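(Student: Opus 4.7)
The plan is to apply the hypothesis of the lemma to the particular two-dimensional subspaces $\spn\{e, x\}$ where $x \in \eperp$ is a nonzero eigenvector of the $h$-self-adjoint endomorphism $L_{\mlt}(e)|_{\eperp}$. Such an $x$ exists for every $\la \in \specp(e)$ by definition of $\specp(e)$, and since $h(e, x) = 0$ with $|e|_{h}^{2}, |x|_{h}^{2} > 0$, the subspace $\spn\{e, x\}$ is $h$-nondegenerate (so $\sect(e, x)$ is defined). The computation performed in the proof of Lemma \ref{eigensectlemma}, specialized to the case in which $x$ itself is the single eigenvector, reduces the convex combination appearing there to a single term and yields
\begin{align*}
\sect(e, x) = |e|_{h}^{-2}\la(1-\la).
\end{align*}

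For part \eqref{npb1}, the hypothesis $\sect(e, x) \leq b$ then gives $\la(1-\la) \leq b|e|_{h}^{2}$, equivalently $p(\la) := \la^{2} - \la + b|e|_{h}^{2} \geq 0$. The discriminant of $p$ is $1 - 4b|e|_{h}^{2}$. If it is nonpositive, then $B = 0$ and the asserted conclusion $\specp(e) \subset (-\infty, 1/2] \cup [1/2, \infty) = \rea$ is vacuous. Otherwise $B = 1 - 4b|e|_{h}^{2} > 0$, and $p(\la) \geq 0$ forces $\la \leq (1-\sqrt{B})/2$ or $\la \geq (1+\sqrt{B})/2$. Taking $b = 0$ gives $B = 1$ and $\specp(e) \subset (-\infty, 0]\cup [1, \infty)$, i.e.\ $\specp(e) \cap (0, 1) = \emptyset$.

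For part \eqref{npb2}, the hypothesis $\sect(e, x) \geq b$ yields $\la(1-\la) \geq b|e|_{h}^{2}$, equivalently $p(\la) \leq 0$. Since $\specp(e)$ is nonempty (as noted before Lemma \ref{eigensectlemma}, $L_{\mlt}(e)$ is self-adjoint and preserves $\eperp$, so has an eigenvector there), there exists some $\la$ at which $p(\la) \leq 0$, which forces the discriminant $1 - 4b|e|_{h}^{2} \geq 0$, i.e.\ $b \leq 1/(4|e|_{h}^{2})$. With $B = 1 - 4b|e|_{h}^{2} \geq 0$, the inequality $p(\la) \leq 0$ is equivalent to $\la \in [(1-\sqrt{B})/2, (1+\sqrt{B})/2]$. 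Specializing to $b = 0$ yields $\specp(e) \subset [0, 1]$.

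There is no substantial obstacle: once the eigenvalue formula from Lemma \ref{eigensectlemma} is in hand, the argument reduces to solving a single scalar quadratic inequality in $\la$. The two small points to track are the degenerate case $B = 0$ in part \eqref{npb1}, in which the conclusion becomes vacuous, and the appeal to nonemptiness of $\specp(e)$ in part \eqref{npb2}, which is needed to derive the necessary condition $b \leq 1/(4|e|_{h}^{2})$ before writing down the bracketed interval.
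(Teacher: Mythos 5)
Your proof is correct and follows essentially the same route as the paper: specialize the eigenvalue computation \eqref{sectexest} from Lemma \ref{eigensectlemma} to an eigenvector of $L_{\mlt}(e)$ orthogonal to $e$, obtaining $\sect(e,x)=|e|_{h}^{-2}\la(1-\la)$, and then solve the resulting scalar quadratic inequality in $\la$. Your treatment is in fact slightly more careful than the paper's, which does not spell out the vacuous case $B=0$ in part \eqref{npb1} and contains a sign slip in part \eqref{npb2} (writing $\la^{2}-\la+b|e|_{h}^{2}\geq 0$ where $\leq 0$ is meant).
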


\begin{proof}
In the setting of \eqref{npb1}, if $0 \neq y \in \alg$ is an eigenvector of $L_{\mlt}(e)$ with eigenvalue $\la$ and orthogonal to $e$, then, by \eqref{sectexest}, $\la(1 -\la) = |e|^{2}\sect(e, y) \leq b|e|^{2}$, so $\la^{2} - \la + b|e|^{2} \geq 0$, which forces $\la$ to be in the indicated range. In the setting of \eqref{npb2}, the same argument shows that $\la^{2} - \la + b|e|^{2} \geq 0$, which forces $1 - 4b|e|^{2} \geq 0$ and forces $\la$ to be in the indicated range.
\end{proof}

Next there are given some results showing that nonnegative sectional nonassociativity precludes the existence of square-zero elements.

\begin{lemma}\label{faithfulmultiplicationlemma}
If a nontrivial metrized algebra $(\alg, \mlt, h)$ satisfies $\alg \mlt \alg = \alg$ then its multiplication is faithful. In particular, the multiplication of a metrized semisimple commutative algebra $(\alg, \mlt, h)$ is faithful.
\end{lemma}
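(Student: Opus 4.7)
The plan is to observe that in any metrized algebra the two-sided annihilator coincides with the $h$-orthogonal complement of $\alg \mlt \alg$, and then to deduce both claims from this single identification.

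First I would establish the key identity: for a metrized algebra $(\alg, \mlt, h)$, an element $x \in \alg$ satisfies $x \mlt y = 0$ for all $y \in \alg$ if and only if $x$ is $h$-orthogonal to $\alg \mlt \alg$. One direction is immediate from invariance: if $x$ is a left annihilator, then $h(x, y \mlt z) = h(x \mlt y, z) = 0$ for all $y, z \in \alg$. Conversely, if $h(x, \alg \mlt \alg) = 0$, then $h(x \mlt y, z) = h(x, y \mlt z) = 0$ for all $y, z$, and nondegeneracy of $h$ forces $x \mlt y = 0$. An identical computation, using the symmetry of $h$ to write $h(y \mlt x, z) = h(y, x \mlt z)$, shows that the right annihilator also equals $(\alg \mlt \alg)^{\perp}$; in particular left and right annihilators coincide in any metrized algebra.

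For the first claim, the hypothesis $\alg \mlt \alg = \alg$ together with nondegeneracy of $h$ gives $(\alg \mlt \alg)^{\perp} = \alg^{\perp} = 0$, so the annihilator vanishes and both $L_{\mlt}$ and $R_{\mlt}$ are injective, i.e., the multiplication is faithful.

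For the second claim, if $(\alg, \mlt)$ is commutative, the annihilator $\ann(\alg)$ is an ideal whose square sits inside $\ann(\alg) \mlt \alg = 0$. Semisimplicity rules out a nonzero ideal with trivial product, so $\ann(\alg) = 0$; equivalently, $(\alg \mlt \alg)^{\perp} = 0$, whence $\alg \mlt \alg = \alg$ by nondegeneracy of $h$, and the first part of the lemma applies. The proof is essentially just this duality between the annihilator and $(\alg \mlt \alg)^{\perp}$; the only mild subtlety is pinning down the notion of semisimplicity used, but the argument goes through for any standard variant that excludes nonzero ideals with zero square.
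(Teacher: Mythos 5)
Your proof of the main claim is correct and essentially the paper's: invariance plus nondegeneracy of $h$ identifies the (left and right) annihilator with $(\alg\mlt\alg)^{\perp}$, which vanishes when $\alg\mlt\alg=\alg$. For the semisimple case you take a genuinely different route. The paper decomposes $\alg=\oplus_{i}\alg_{i}$ into simple ideals and verifies $\alg\mlt\alg=\alg$ directly, using only that $\alg_{i}\mlt\alg_{i}$ is a nonzero ideal of the simple algebra $\alg_{i}$, hence equals $\alg_{i}$; the first part then applies. You instead argue that the annihilator is a square-zero ideal and invoke ``semisimplicity rules out nonzero ideals with trivial product.'' That implication is true for the definition the paper is implicitly using (direct sum of simple ideals), but it is not free for general nonassociative algebras: you need to check, e.g., componentwise, that an ideal $\ideal$ with $\ideal\mlt\ideal=0$ projects into the annihilator of each simple summand, which is zero since $\alg_{i}\mlt\alg_{i}=\alg_{i}$. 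So your argument is sound but quietly outsources exactly the step the paper does explicitly; the paper's version is the more economical one here because it never needs the ``no square-zero ideals'' characterization at all. Your observation that the conclusion $(\alg\mlt\alg)^{\perp}=0$ forces $\alg\mlt\alg=\alg$ for semisimple algebras is a small bonus not stated in the paper.
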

\begin{proof}
If $z \in \ker L_{\mlt}$, then, by the invariance of $h$, $0 = h(L_{\mlt}(z)x, y) = h(z, x\mlt y)$ for all $x, y \in \alg$. This shows $\ker L$ is contained in the orthocomplement $(\alg \mlt \alg)^{\perp}$ of $\alg \mlt \alg$. In particular, if $\alg \mlt \alg = \alg$, then $\ker L_{\mlt} \subset (\alg \mlt \alg)^{\perp} = \alg^{\perp} = \{0\}$, so $L_{\mlt}$ is injective. 
If $\alg = \oplus_{i =1 }^{k}\alg_{i}$ is a direct sum of simple ideals, then $\alg_{i}\mlt \alg_{j} = \{0\}$ if $i \neq j$, and, since $\alg_{i}$ is simple, $\alg_{i}\mlt \alg_{i} = \alg_{i}$, so $\alg \mlt \alg = \sum_{i}\alg_{i}\mlt \alg_{i} = \sum_{i}\alg_{i} = \alg$, and $L_{\mlt}$ is injective. By the invariance of $h$, $h(R_{\mlt}(y)x, z) = h(x, L_{\mlt}(y)z)$ for all $x, y, z \in \alg$, so were $R_{\mlt}$ not injective, $L_{\mlt}$ would not be injective. 
\end{proof}

\begin{lemma}\label{rchassoclemma}
Let $(\alg,\mlt, h)$ be a Euclidean metrized commutative algebra. Let $0 \neq z \in \szero(\alg, \mlt)$.
 For $0 \neq y \in \alg$ not in the span of $z$, $\sect(z, y) \leq 0$, with equality if and only if $z \mlt y = 0$. In particular, $\szero(\alg, \mlt) = \{0\}$ if there holds either of the following conditions:
\begin{enumerate}
\item $(\alg, \mlt, h)$ has positive sectional nonassociativity.
\item\label{nsz2} $(\alg, \mlt, h)$ has nonnegative sectional nonassociativity and is semisimple.
\end{enumerate}
\end{lemma}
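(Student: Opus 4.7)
The plan is to read off the sign of $\sect(z, y)$ directly from the formula \eqref{sectnadefined}, exploiting both commutativity of $\mlt$ and the hypothesis $z \mlt z = 0$. Because $\mlt$ is commutative, $z \mlt y = y \mlt z$, so $h(z \mlt y, y \mlt z) = h(z \mlt y, z \mlt y)$, and because $z \mlt z = 0$, the first summand $h(z \mlt z, y \mlt y)$ of the numerator vanishes. Therefore
\begin{align*}
\sect(z, y) = -\frac{h(z \mlt y, z \mlt y)}{h(z, z)h(y, y) - h(z, y)^{2}}.
\end{align*}
The denominator is strictly positive by the Cauchy--Schwarz inequality, using positive-definiteness of $h$ and the linear independence of $z$ and $y$, while the numerator is manifestly nonpositive. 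This yields $\sect(z, y) \leq 0$, with equality if and only if $z \mlt y = 0$.

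From here both consequences follow by short indirect arguments. Under the positive-sectional-nonassociativity hypothesis one has $\dim \alg \geq 2$, and for any nonzero $z \in \szero(\alg, \mlt)$ the choice of some $y$ linearly independent from $z$ produces the contradiction $0 < \sect(z, y) \leq 0$. Under the nonnegativity hypothesis together with semisimplicity, the bound just established combines with nonnegativity of $\sect(z, y)$ to force $\sect(z, y) = 0$ for every $y$ linearly independent of $z$; the equality characterization then gives $z \mlt y = 0$ for every such $y$, and combined with $z \mlt z = 0$ this shows $L_{\mlt}(z) \equiv 0$. This contradicts Lemma \ref{faithfulmultiplicationlemma}, which asserts that the multiplication of a metrized semisimple commutative algebra is faithful.

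The proof is essentially self-executing once the initial simplification of the numerator is made, and there is no serious obstacle. The only subtleties worth confirming are strict positivity of the Gram determinant in the denominator (which needs both anisotropy of $h$ and linear independence of $z$ and $y$) and that semisimplicity enters only through the faithfulness conclusion of Lemma \ref{faithfulmultiplicationlemma}; the positive-sectional-nonassociativity case does not require it.
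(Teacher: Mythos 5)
Your proof is correct and follows essentially the same route as the paper: the key computation $\sect(z,y)\bigl(h(z,z)h(y,y)-h(z,y)^{2}\bigr) = -h(z\mlt y, z\mlt y) = -|L_{\mlt}(z)y|_{h}^{2}$ is exactly the paper's identity, and the two consequences are deduced in the same way, with semisimplicity entering only through the faithfulness statement of Lemma \ref{faithfulmultiplicationlemma}. No gaps.
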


\begin{proof}
Let $0 \neq z \in \szero(\alg, \mlt)$. For $y \in \alg$ linearly independent of $z$, there holds $\sect(z, y)|z\wedge y|^{2} = - |L_{\mlt}(z)y|^{2} \leq 0$, with equality if and only if $z \mlt y = 0$. In particular, $(\alg, \mlt,h)$ cannot have positive sectional nonassociativity, and if $(\alg, \mlt,h)$ has nonnegative sectional nonassociativity, then $L_{\mlt}(z)y = 0$ for all $y \in \alg$, so $L_{\mlt}(z) =0$. 
By Lemma \ref{faithfulmultiplicationlemma}, if $L_{\mlt}(z) = 0$, then $(\alg, \mlt)$ is not semisimple. 
\end{proof}

A commutative algebra is \emph{exact} if $\tr L_{\mlt}(x) = 0$ for all $x \in \alg$. Exactness is a condition analogous to unimodularity of a Lie algebra. Evidently, an exact commutative algebra is not unital. 

\begin{lemma}
A Euclidean metrized commutative algebra $(\alg, \mlt, h)$ with nontrivial multiplication and nonnegative sectional nonassociativity is not exact.
\end{lemma}

\begin{proof}
By \cite[Lemma $2.3$]{Tkachev-correction}, there exists a nontrivial idempotent $e \in \idem(\alg, \mlt)$. Because $L_{\mlt}(e)$ is self-adjoint, there is an $h$-orthogonal basis $\{e, z_{1}, \dots, z_{n-1}\}$ of $\alg$ such that $z_{1}, \dots, z_{n-1}$ are eigenvectors of $L_{\mlt}(e)$ with (not necessarily distinct) eigenvalues $\la_{1}, \dots, \la_{n-1} \in \rea$. By \eqref{npb2} of Lemma \ref{nonpossquareslemma}, nonnegativity of the sectional nonassociativity implies $\la_{i} \in [0, 1]$ for $1 \leq i \leq n-1$. It follows that $\tr L_{\mlt}(e) = 1 + \sum_{i = 1}^{n-1}\la_{i} \geq 1$, so $(\alg, \mlt)$ is not exact.
\end{proof}

In a commutative $\rea$-algebra $(\alg, \mlt)$ the set $\squares(\alg, \mlt) = \{x \mlt x: x \in \alg\}$ is a closed cone, the \emph{cone of squares}. By definition, $\idem(\alg, \mlt) \subset \squares(\alg, \mlt)$.

\begin{lemma}\label{squarelemma}
Let $(\alg, \mlt, h)$ be a Euclidean metrized commutative algebra having nonnegative sectional nonassociativity. The set
\begin{align}
\cone(\alg, \mlt, h) = \{x \in \alg: L_{\mlt}(x) \,\,\text{is nonnegative}\}
\end{align}
is a closed convex cone containing $\squares(\alg, \mlt)$ for all $x \in \alg$ and consequently containing $\idem(\alg, \mlt)$.
\end{lemma}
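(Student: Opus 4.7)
The plan is to verify three things in order: that $\cone(\alg, \mlt, h)$ is a convex cone, that it is closed, and that it contains $\squares(\alg, \mlt)$. The last containment is the only nontrivial point, and the inclusion $\idem(\alg, \mlt) \subset \squares(\alg, \mlt)$ is tautological since an idempotent is its own square.

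First I would record that, by commutativity of $\mlt$ and invariance of $h$, every endomorphism $L_{\mlt}(x)$ is $h$-self-adjoint, so ``nonnegative'' may be interpreted in the standard sense $h(L_{\mlt}(x)y, y) \geq 0$ for all $y \in \alg$. Because $L_{\mlt}:\alg \to \eno(\alg)$ is linear and the $h$-nonnegative self-adjoint endomorphisms form a convex cone in $\eno(\alg)$, the set $\cone(\alg, \mlt, h) = L_{\mlt}^{-1}(\{\text{nonnegative self-adjoint endomorphisms}\})$ is a convex cone. Closedness of $\cone(\alg, \mlt, h)$ follows because the defining condition is the intersection over all $y \in \alg$ of the closed half-spaces $\{x : h((x\mlt y), y) \geq 0\}$.

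The core step is the containment $\squares(\alg, \mlt) \subset \cone(\alg, \mlt, h)$. Given $x \in \alg$, I would compute, using the invariance of $h$,
\begin{align*}
h(L_{\mlt}(x\mlt x)y, y) = h((x \mlt x)\mlt y, y) = h(x\mlt x, y\mlt y),
\end{align*}
for all $y \in \alg$. The hypothesis of nonnegative sectional nonassociativity is exactly the Norton inequality \eqref{norton}, which in the commutative Euclidean setting reads $h(x\mlt x, y\mlt y) - h(x\mlt y, y\mlt x) \geq 0$ for all linearly independent $x, y \in \alg$ (and trivially also when they are linearly dependent, as in this case both sides of the sectional nonassociativity formula vanish). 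By commutativity, $h(x\mlt y, y \mlt x) = h(x\mlt y, x\mlt y) = |x \mlt y|_{h}^{2} \geq 0$, so combining these two inequalities yields
\begin{align*}
h(L_{\mlt}(x\mlt x)y, y) = h(x\mlt x, y\mlt y) \geq |x\mlt y|_{h}^{2} \geq 0.
\end{align*}
Hence $L_{\mlt}(x \mlt x)$ is $h$-nonnegative, so $x\mlt x \in \cone(\alg, \mlt, h)$.

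There is no real obstacle here; the only bookkeeping item worth noting is that in the Euclidean setting every two-dimensional subspace is automatically $h$-nondegenerate, so no issue arises in invoking \eqref{norton} unrestrictedly. The argument moreover makes transparent why the cone interpretation of the Norton inequality is natural: nonnegative sectional nonassociativity is precisely the statement that every square $x\mlt x$ lies in the cone on which $L_{\mlt}$ acts as a positive operator.
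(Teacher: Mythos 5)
Your proposal is correct and follows essentially the same route as the paper: the key point in both is that nonnegative sectional nonassociativity is exactly the operator inequality $L_{\mlt}(x\mlt x) \geq L_{\mlt}(x)^{2} \geq 0$ (your displayed chain $h(L_{\mlt}(x\mlt x)y,y) = h(x\mlt x, y\mlt y) \geq |x\mlt y|_{h}^{2} \geq 0$ is precisely this, since $h(L_{\mlt}(x)^{2}y,y) = |x\mlt y|_{h}^{2}$), and the cone/closedness assertions follow from pulling back the closed convex cone of nonnegative self-adjoint endomorphisms under the linear map $L_{\mlt}$.
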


\begin{proof}
That the sectional nonassociativity is nonnegative means that $L_{\mlt}(x\mlt x) \geq L_{\mlt}(x)^{2} \geq 0$ for all $x \in \alg$. This shows $\idem(\alg, \mlt)\subset \squares(\alg, \mlt, h) \subset \cone(\alg, \mlt, g)$. In a Euclidean metrized commutative algebra every multiplication endomorphism $L_{\mlt}(x)$ is self-adjoint, so diagonalizable with real eigenvalues. If $x, y \in \alg$ and $t \in [0, 1]$ then $L_{\mlt}(tx + (1-t)y) = tL_{\mlt}(x) + (1-t)L_{\mlt}(y) \geq 0$ because the set of nonnegative definite self-adjoint endomorphisms of a Euclidean vector space is a closed convex cone.
\end{proof}

With the hypotheses of Lemma \ref{squarelemma}, are the extreme rays of $\squares(\alg, \mlt)$ generated by elements of $\idem(\alg, \mlt)$? This is true for the cones of squares in the simple Euclidean Jordan algebras over $\rea$ or $\com$; these are the cones of nonnegative definite Hermitian matrices and their extremal rays are generated by the rank one idempotents. For the Griess algebras of OZ VOAs (Example \ref{voaexample}) it seems the question has not been studied.

\bibliographystyle{amsplain}

\def\polhk#1{\setbox0=\hbox{#1}{\ooalign{\hidewidth
  \lower1.5ex\hbox{`}\hidewidth\crcr\unhbox0}}} \def\cprime{$'$}
  \def\cprime{$'$} \def\cprime{$'$}
  \def\polhk#1{\setbox0=\hbox{#1}{\ooalign{\hidewidth
  \lower1.5ex\hbox{`}\hidewidth\crcr\unhbox0}}} \def\cprime{$'$}
  \def\cprime{$'$} \def\cprime{$'$} \def\cprime{$'$}
  \def\polhk#1{\setbox0=\hbox{#1}{\ooalign{\hidewidth
  \lower1.5ex\hbox{`}\hidewidth\crcr\unhbox0}}} \def\cprime{$'$}
  \def\Dbar{\leavevmode\lower.6ex\hbox to 0pt{\hskip-.23ex \accent"16\hss}D}
  \def\cprime{$'$} \def\cprime{$'$} \def\cprime{$'$} \def\cprime{$'$}
  \def\cprime{$'$} \def\cprime{$'$} \def\cprime{$'$} \def\cprime{$'$}
  \def\cprime{$'$} \def\cprime{$'$} \def\cprime{$'$} \def\dbar{\leavevmode\hbox
  to 0pt{\hskip.2ex \accent"16\hss}d} \def\cprime{$'$} \def\cprime{$'$}
  \def\cprime{$'$} \def\cprime{$'$} \def\cprime{$'$} \def\cprime{$'$}
  \def\cprime{$'$} \def\cprime{$'$} \def\cprime{$'$} \def\cprime{$'$}
  \def\cprime{$'$} \def\cprime{$'$} \def\cprime{$'$} \def\cprime{$'$}
  \def\cprime{$'$} \def\cprime{$'$} \def\cprime{$'$} \def\cprime{$'$}
  \def\cprime{$'$} \def\cprime{$'$} \def\cprime{$'$} \def\cprime{$'$}
  \def\cprime{$'$} \def\cprime{$'$} \def\cprime{$'$} \def\cprime{$'$}
  \def\cprime{$'$} \def\cprime{$'$} \def\cprime{$'$} \def\cprime{$'$}
  \def\cprime{$'$} \def\cprime{$'$} \def\cprime{$'$}
\providecommand{\bysame}{\leavevmode\hbox to3em{\hrulefill}\thinspace}
\providecommand{\MR}{\relax\ifhmode\unskip\space\fi MR }
% \MRhref is called by the amsart/book/proc definition of \MR.
\providecommand{\MRhref}[2]{%
  \href{http://www.ams.org/mathscinet-getitem?mr=#1}{#2}
}
\providecommand{\href}[2]{#2}

\end{document}